\newcommand*{\mailto}[1]{\href{mailto:#1}{\nolinkurl{#1}}}
\newcommand{\arxiv}[1]{\href{http://arxiv.org/abs/#1}{arXiv: #1}}
\newtheorem{theorem}{Theorem}[section]
\newtheorem{definition}[theorem]{Definition}
\newtheorem{lemma}[theorem]{Lemma}
\newtheorem{proposition}[theorem]{Proposition}
\newtheorem{corollary}[theorem]{Corollary}
\newtheorem{remark}[theorem]{Remark}
\newcommand{\R}{{\mathbb R}}
\newcommand{\N}{{\mathbb N}}
\newcommand{\C}{{\mathbb C}}
\newcommand{\spr}[2]{\langle #1 , #2 \rangle}
\newcommand{\indik}{\mathbbm{1}}
\newcommand{\f}{\mathbf{f}}
\newcommand{\g}{\mathbf{g}}
\newcommand{\supp}{\mathrm{supp}}
\newcommand{\D}{\mathcal{D}}
\newcommand{\F}{\mathcal{F}}
\newcommand{\G}{\mathcal{G}}
\newcommand{\Qr}{\mathsf{q}}
\newcommand{\Nr}{\mathsf{n}}
\newcommand{\Hr}{\mathsf{h}}
\newcommand{\Wr}{\mathsf{w}}
\newcommand{\T}{\mathrm{T}}
\newcommand{\I}{\mathrm{i}}
\newcommand{\tr}{\mathrm{tr}}
\newcommand{\im}{\mathrm{Im}}
\newcommand{\re}{\mathrm{Re}}
\newcommand{\be}{\begin{equation}}
\newcommand{\ee}{\end{equation}}
\newcommand{\ba}{\begin{array}}
\newcommand{\ea}{\end{array}}
\newcommand{\loc}{\mathrm{loc}}
\newcommand{\cc}{\mathrm{c}}
\newcommand{\qd}{{[1]}}
\newcommand{\dom}[1]{\mathrm{dom}(#1)}
\newcommand{\ran}[1]{\mathrm{ran}(#1)}
\renewcommand{\ker}[1]{\mathrm{ker}(#1)}
\newcommand{\mul}[1]{\mathrm{mul}(#1)}
\newcommand{\ledot}{\,\cdot\,}
\newcommand{\redot}{\cdot\,}
\newcommand{\NL}{(0-)}
\newcommand{\NLz}{(z,0-)}
\newcommand{\Hasto}{H^\prime_\ast[0,L)}
\newcommand{\cH}{\mathcal{H}}
\newcommand{\cK}{\mathcal{K}}
\newcommand{\Lmu}{L^2(\R;\mu)}
\newcommand{\SMP}{\Xi}
\newcommand{\Strings}{\mathcal{S}}
\newcommand{\Nevan}{\mathcal{N}}
\newcommand{\dip}{\upsilon}
\numberwithin{equation}{section}
\begin{document}

\title[The inverse spectral problem]{The inverse spectral problem for indefinite strings}

\author[J.\ Eckhardt]{Jonathan Eckhardt}
\address{School of Computer Science \& Informatics\\ Cardiff University\\ Queen's Buildings \\ 5 The Parade\\ Roath \\ Cardiff CF24 3AA\\ Wales \\ UK}
\email{\mailto{EckhardtJ@cardiff.ac.uk}}

\author[A.\ Kostenko]{Aleksey Kostenko}
\address{Faculty of Mathematics\\ University of Vienna \\ Oskar-Morgenstern-Platz 1\\ 1090 Wien\\ Austria}
\email{\mailto{Oleksiy.Kostenko@univie.ac.at}; \mailto{duzer80@gmail.com}}

\thanks{\href{http://dx.doi.org/10.1007/s00222-015-0629-1}{Invent.\ Math.\ {\bf 204} (2016), no.~3, 939--977}}
\thanks{{\it Research supported by the Austrian Science Fund (FWF) under Grants No.\ J3455 and P26060.}}

\keywords{Spectral problem for a string, indefinite weight, inverse spectral theory}
\subjclass[2010]{Primary 34A55, 34B05; Secondary 47E05, 34B20}

\begin{abstract}
 Motivated by the study of certain nonlinear wave equations (in particular, the Camassa--Holm equation), we introduce a new class of generalized indefinite strings associated with differential equations of the form 
 \begin{align*}
 - u'' = z\, u\, \omega + z^2 u\, \dip 
 \end{align*}
 on an interval $[0,L)$, where $\omega$ is a real-valued distribution in $H^{-1}_{\loc}[0,L)$, $\dip$ is a non-negative Borel measure on $[0,L)$ and $z$ is a complex spectral parameter.
 Apart from developing basic spectral theory for these kinds of spectral problems, our main result is an indefinite analogue of M.\ G.\ Krein's celebrated solution of the inverse spectral problem for inhomogeneous vibrating strings.
\end{abstract}

\maketitle

\section{Introduction}

 A classical object in spectral theory is the differential equation 
 \begin{align}\label{eqnClaS}
  -u'' = z\, u\,\omega  
 \end{align}
 on an interval $[0,L)$, where $\omega$ is a non-negative Borel measure on $[0,L)$ and $z$ is a complex spectral parameter. 
 The relevance of this particular spectral problem initially stems from the fact that it arises after applying the separation of variables method to the wave equation describing small transversal oscillations of a vibrating string of length $L$ and with mass distribution given by $\omega$.  
 Spectral theory for these kinds of problems was developed by M.\ G.\ Krein in the early 1950's \cite{kr52, kakr74} and subsequently applied to study interpolation and filtration problems for stationary stochastic processes; see \cite{dymc76}. 
 Apart from this, it is also of use in connection with the description of one-dimensional Markov birth and death processes \cite{fe54, ka75}. 

 Probably the most prominent object in spectral theory for~\eqref{eqnClaS} is the so-called Weyl--Titchmarsh function, which encodes all the spectral information. 
 A remarkable and well-known result of M.\ G.\ Krein identifies the totality of all possible Weyl--Titchmarsh functions with the class of so-called Stieltjes functions (we refer to \cite{dymc76, kakr74, ka94, kowa82} for several surveys).  
 In other words, he was able to solve the inverse spectral problem for~\eqref{eqnClaS}. 
 The present article is concerned with further questions in this direction which are not far to seek:
 {\em What happens if $\omega$ is allowed to be a real-valued Borel measure on $[0,L)$ instead of just non-negative? 
 Is there an equally concise analogue of M.\ G.\ Krein's solution of the inverse spectral problem?} 
 Although there are various results about spectral theory for~\eqref{eqnClaS} when $\omega$ is allowed to be indefinite (we only mention \cite{atmi87, bft, be89, fl96, fl14, ko13, ze05} and the references therein), there is still no satisfactory answer to these questions.
 A first guess could suggest that instead of the class of Stieltjes functions (which are, roughly speaking, determined by not having singularities on the negative real axis) one obtains the entire class of Herglotz--Nevanlinna functions (which may have singularities on the whole real axis).  
 However, this is not the case as it turned out that the class of spectral problems~\eqref{eqnClaS} with real-valued Borel measures $\omega$ is too narrow in this respect, even for simple cases. 
 In fact, not even all rational Herglotz--Nevanlinna functions arise as the Weyl--Titchmarsh function of such a spectral problem. 
 The deeper reason for why this fails in the real-valued case in some sense lies in the fact that the class of real-valued Borel measures is not closed with respect to a particular topology, whereas the class of non-negative Borel measures is; cf.\ Proposition~\ref{propSMPcont}. 
 Altogether, it does not seem very likely that there is a simple and concise description of the class of Weyl--Titchmarsh functions that arise from the spectral problem~\eqref{eqnClaS} with real-valued Borel measures $\omega$. 
 
 One way to overcome this problem by means of extending the class of spectral problems was suggested by M.\ G.\ Krein and H.\ Langer \cite{krla79, la76}, who considered the modified differential equation  
 \begin{align}\label{eqnSPla}
  - u'' = z\, u\, \omega + z^2 u\, \dip
 \end{align}
 on an interval $[0,L)$, where $\omega$ is a real-valued Borel measure on $[0,L)$ and $\dip$ is a non-negative Borel measure on $[0,L)$. 
 In particular, they showed in \cite{krla79} that indeed every rational Herglotz--Nevanlinna function arises as the Weyl--Titchmarsh function of such a spectral problem.
 However, the totality of all Weyl--Titchmarsh functions which are obtained in this way is still a proper subset of the class of Herglotz--Nevanlinna functions. 
 In fact, a further generalization of~\eqref{eqnSPla} yielding all Herglotz--Nevanlinna functions which only admit finitely many singularities on the negative real axis was later proposed by H.\ Langer and H.\ Winkler \cite{lawi98}. 
 
 It is the purpose of the present article to show how to extend the class of spectral problems such that the corresponding Weyl--Titchmarsh functions coincide exactly with the class of all Herglotz--Nevanlinna functions. 
 To this end, we will consider the differential equation~\eqref{eqnSPla} on an interval $[0,L)$ with $\omega$ being a real-valued distribution in $H^{-1}_{\loc}[0,L)$ and $\dip$ a non-negative Borel measure on $[0,L)$. 
 Of course, it is not obvious at all how this differential equation has to be understood in this case (also note that it is posed on a half-open interval).  
 For this reason, we will first discuss its precise notion in Section~\ref{secBDE}, as well as basic existence and uniqueness results of solutions. 
 In the subsequent section, we will then introduce an associated linear relation in a suitable Hilbert space, which turns out to be self-adjoint.
 Note that we can not use the usual $L^2([0,L);\omega)$ for our class of coefficients as in \cite{la76} for example. 
 Moreover, although~\eqref{eqnSPla} looks quite similar to a Schr\"odinger equation with an energy-dependent potential (see \cite{hrpr12, jaje72, kau75, sasz96} and the references therein), the approaches employed there do not work in our case due to the low regularity assumptions on our coefficients.  
 For this reason, we had to come up with a new way of treating~\eqref{eqnSPla} which is a modification of the left-definite theory for~\eqref{eqnClaS}. 
 In Section~\ref{secWT} we will introduce the principal Weyl--Titchmarsh function associated with our spectral problem, derive its basic properties and establish its connection to the linear relation. 
 The following section is then devoted to the main result of this article, the aforementioned solution of the corresponding inverse spectral problem, supplemented by continuity properties of the correspondence. 
 In the concluding Section~\ref{secEssPos}, we will show how our solution of the inverse spectral problem fits in with the classical result of M.\ G.\ Krein by deducing it from our solution. 
 Since the proof of our main result relies on L.\ de Branges' solution of the inverse spectral problem for canonical systems, we collect the necessary basic facts in Appendix~\ref{sec:cansys} in order to make the exposition self-contained. 
 
 Although the solution of the inverse spectral problem for~\eqref{eqnSPla} may seem to be of purely academic interest at first, it recently turned out to be important in connection with the integration of certain nonlinear wave equations (namely the Camassa--Holm equation \cite{caho93, besasz00} and its two-component generalization \cite{clz06, hoiv12} as well as the Hunter--Saxton equation \cite{husa91} and the Dym equation \cite{kr75}) by means of the inverse scattering transform.  
 In order to incorporate one of the most intriguing features (that is, finite time blow-up which resembles wave breaking \cite{coes98, mc03, mc04}) of these equations, it is of essential importance to be able to allow $\omega$ to be real-valued.  
  This indefiniteness causes serious problems when dealing with these nonlinear wave equations (noticeable from the discussions in \cite{comc99}, \cite{kau06} or \cite{le05} for example), which is why a lot of the literature actually restricts to the case of strictly positive $\omega$. 
  Most of these complications can already be observed for the prototypical and explicitly solvable example of a peakon--antipeakon collision for the Camassa--Holm equation, described in detail in \cite{besasz01, coes98, ConservMP, wa06}. 
 All this is reflected by the fact that also inverse spectral theory for~\eqref{eqnClaS} is much more intricate for real-valued $\omega$ and up to now only insufficient partial results are available in this case \cite{besasz00, besasz07, bel87, bebrwe09, LeftDefiniteSL, CHPencil, IsospecCH}.
 The results of the present article can be used to considerably generalize the best results known so far obtained previously in~\cite{bebrwe09, LeftDefiniteSL, CHPencil, IsospecCH} to rather irregular coefficients $\omega$ which could not be treated before.  
 In view of applications to nonlinear wave equations, the regularity of coefficients assumed in this article is exactly what is desired (see \cite{grhora12} for the two-component Camassa--Holm system). 
 Also note that the references~\cite{bebrwe09, LeftDefiniteSL, CHPencil, IsospecCH} only treat the uniqueness part of the inverse spectral problem and except for \cite{CHPencil}, they do not even include the additional Borel measure $\dip$.
 However, it is exactly this second term in~\eqref{eqnSPla} and the low regularity assumption on $\omega$ which allow us to prove existence for the inverse spectral problem as well.   


\section{Preliminaries} 

 In order to fix our notation, we will first introduce several spaces of functions and distributions. 
 For every fixed $L\in(0,\infty]$, we denote with $H^1_{\loc}[0,L)$, $H^1[0,L)$ and $H^1_{\cc}[0,L)$ the usual Sobolev spaces. 
 To be precise, this means  
\begin{align}
H^1_{\loc}[0,L) & =  \lbrace f\in AC_{\loc}[0,L) \,|\, f'\in L^2_{\loc}[0,L) \rbrace, \\
 H^1[0,L) & = \lbrace f\in H^1_{\loc}[0,L) \,|\, f,\, f'\in L^2[0,L) \rbrace, \\ 
 H^1_{\cc}[0,L) & = \lbrace f\in H^1[0,L) \,|\, \supp(f) \text{ compact in } [0,L) \rbrace.
\end{align}
Furthermore, we introduce the space of distributions $H^{-1}_{\loc}[0,L)$ as the topological dual of $H^1_{\cc}[0,L)$. 
One notes that the mapping $\Qr\mapsto\chi$, defined by
 \begin{align}
    \chi(h) = - \int_0^L \Qr(x)h'(x)dx, \quad h\in H^1_{\cc}[0,L),
 \end{align} 
 establishes a one-to-one correspondence between $L^2_{\loc}[0,L)$ and $H^{-1}_{\loc}[0,L)$. 
The unique function $\Qr\in L^2_{\loc}[0,L)$ corresponding to some $\chi\in H^{-1}_{\loc}[0,L)$ in this way will be referred to as {\em the normalized anti-derivative} of $\chi$.
 We say that the distribution $\chi\in H^{-1}_{\loc}[0,L)$ is real-valued if $\Qr$ is real-valued almost everywhere on $[0,L)$. 

 Let us mention that particular kinds of distributions in $H^{-1}_{\loc}[0,L)$ arise from Borel measures on $[0,L)$.
 In fact, if $\chi$ is a complex-valued Borel measure on $[0,L)$, then we will identify it with the distribution in $H^{-1}_{\loc}[0,L)$ given by  
 \begin{align}
  h \mapsto \int_{[0,L)} h\,d\chi. 
 \end{align}
The normalized anti-derivative $\Qr$ of $\chi$ is simply given by the distribution function
 \begin{align}
 \Qr(x)=\int_{[0,x)}d\chi
 \end{align}
 for almost all $x\in [0,L)$, as an integration by parts (use, for example, \cite[Exercise~5.8.112]{bo07}, \cite[Theorem~21.67]{hest65}) shows.  

 In order to obtain a self-adjoint realization of the differential equation~\eqref{eqnSPla} in a suitable Hilbert space, we finally also introduce the less common function space  
\begin{align}
 H'[0,L) & = \begin{cases} \lbrace f\in H^1_{\loc}[0,L) \,|\, f'\in L^2[0,L),~ \lim_{x\rightarrow L} f(x) = 0 \rbrace, & L<\infty, \\ \lbrace f\in H^1_{\loc}[0,L) \,|\, f'\in L^2[0,L) \rbrace, & L=\infty, \end{cases} 
\end{align}
 as well as the linear subspace   
\begin{align}
 \Hasto & = \lbrace f\in H'[0,L) \,|\, f(0) = 0 \rbrace, 
\end{align}
 which turns into a Hilbert space when equipped with the scalar product 
 \begin{align}\label{eq:normti}
 \spr{f}{g}_{\Hasto} = \int_0^L f'(x) g'(x)^\ast dx, \quad f,\, g\in\Hasto.
 \end{align}
The space $\Hasto$ can be viewed as a completion of the space of all smooth functions which have compact support in $(0,L)$ with respect to the norm induced by~\eqref{eq:normti}. 
In particular, if $L$ is finite, then the space $\Hasto$ coincides algebraically and topologically with the usual Sobolev space $H^1_0[0,L)$. 
Finally, let us mention the following simple growth estimate for functions in $\Hasto$
\begin{align}\label{eqnFuncHastGrow}
 |f(x)|^2 \leq x\left(1-\frac{x}{L}\right)\, \|f\|^2_{\Hasto}, \quad x\in[0,L),~f\in\Hasto, 
\end{align}
 which is obtained from an application of the Cauchy--Schwarz inequality.
 Here and henceforth we employ the convention that whenever an $L$ appears in a denominator, the corresponding fraction has to be interpreted as zero if $L$ is infinite.

\section{The basic differential equation}\label{secBDE}

 Throughout this article, we fix some $L\in(0,\infty]$, let $\omega\in H^{-1}_{\loc}[0,L)$ be a real-valued distribution on $[0,L)$ and $\dip$ be a non-negative Borel measure on $[0,L)$.  
 As a first step, we will discuss the meaning of the inhomogeneous differential equation 
 \begin{align}\label{eqnDEinho}
  -f''  = z\, \omega f + z^2 \dip f + \chi, 
 \end{align}
 where $\chi\in H^{-1}_{\loc}[0,L)$ and $z$ is a complex spectral parameter. 
 Of course, this differential equation has to be understood in a distributional sense (cf.\ \cite{gewe14, ss03}).   
  
  \begin{definition}\label{defSolution}
  A solution of~\eqref{eqnDEinho} is a function $f\in H^1_{\loc}[0,L)$ such that 
 \begin{align}
  \Delta_f h(0) + \int_{0}^L f'(x) h'(x) dx = z\, \omega(fh) + z^2 \dip(fh) +  \chi(h), \quad h\in H^1_{\cc}[0,L),
 \end{align}
 for some constant $\Delta_f\in\C$. 
 In this case, the constant $\Delta_f$ is uniquely determined and will henceforth always be denoted with $f'\NL$ for apparent reasons. 
 \end{definition}
 
 Of course, there are also several other ways of introducing the same notion of solutions. 
 Upon choosing particular test functions $h_x\in H^1_{\cc}[0,L)$ given by 
 \begin{align}
  h_x(t) = \begin{cases} x-t, & t\in[0,x), \\ 0, & t\in[x,L), \end{cases}
 \end{align}
 for every $x\in[0,L)$, one observes that a function $f\in H^1_{\loc}[0,L)$ is a solution of~\eqref{eqnDEinho} if and only if one has 
 \begin{align}\label{eqnDEinhoInt}
   f(x) & = f(0) + \Delta_f x - z\,\omega(fh_x) - z^2 \dip(fh_x) - \chi(h_x), \quad x\in[0,L), 
 \end{align}
 for some $\Delta_f\in\C$ (which is the same constant as in the definition above). 
  Note that this formulation simply reduces to the usual integral equation (as used in, for example, \cite[\S1]{kakr74}, \cite[Section~1]{la76}) if $\omega$ and $\chi$ are Borel measures.
 Alternatively, upon denoting with $\Nr_z$ and $\Qr$ the respective normalized anti-derivatives of the distributions $z\,\omega+z^2\dip$ and $\chi$, the equation~\eqref{eqnDEinhoInt} takes the form  
 \begin{align}\label{eqnDEinhoIntNAD}
   f(x) & = f(0) + \Delta_f x -  \int_0^x \Nr_z(t) (f(t) - f'(t)(x-t))dt - \int_0^x \Qr(t)dt, \quad x\in[0,L).
 \end{align}
 From this, one readily verifies that a function $f\in H^1_{\loc}[0,L)$ is a solution of~\eqref{eqnDEinho} if and only if one has 
 \begin{align}\label{eqnQD}
  f'(x) + \Nr_z(x)f(x) + \Qr(x) = \Delta_f + \int_0^x \Nr_z(t)f'(t) dt
 \end{align}
 for some $\Delta_f\in\C$  (which is again the same constant as before) and almost all $x\in[0,L)$. 
 In fact, to this end one simply needs to differentiate~\eqref{eqnDEinhoIntNAD}, respectively integrate~\eqref{eqnQD}. 
 Since the right-hand side of~\eqref{eqnQD} is clearly locally absolutely continuous as a function of $x$, it can be used as a regularized quasi-derivative. 
  
 We will now use the latter equivalent formulation in order to prove a basic existence and uniqueness result for the inhomogeneous differential equation~\eqref{eqnDEinho}. 

\begin{lemma}\label{lemEE}
 For every $d_1$, $d_2\in\C$ there is a unique solution $f$ of the inhomogeneous differential equation~\eqref{eqnDEinho} with the initial conditions
 \begin{align}\label{eqnDEiv}
 f(0) = d_1 \qquad\text{and}\qquad f'\NL  = d_2.
 \end{align} 
 If $\chi$ is real-valued as well as $z$, $d_1$, $d_2\in\R$, then the solution $f$ is real-valued too. 
\end{lemma}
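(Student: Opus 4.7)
The plan is to convert the distributional equation into a Volterra integral system for an auxiliary pair of continuous functions, and then apply a standard Picard/successive-approximations argument. The equivalent formulation~\eqref{eqnQD} is what makes this feasible: it rewrites the problem entirely in terms of the $L^2_{\loc}$ functions $\Nr_z$ and $\Qr$, avoiding any need to evaluate distributions pointwise.

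Concretely, I would introduce the regularized quasi-derivative
\begin{align*}
 g(x) = f'(x) + \Nr_z(x) f(x) + \Qr(x),
\end{align*}
so that~\eqref{eqnQD} reads $g(x) = \Delta_f + \int_0^x \Nr_z(t) f'(t)\, dt$; substituting $f'(t) = g(t) - \Nr_z(t)f(t) - \Qr(t)$ and integrating the relation for $f'$ gives the closed Volterra system
\begin{align*}
 f(x) &= d_1 + \int_0^x \bigl(g(t) - \Nr_z(t) f(t) - \Qr(t)\bigr)\, dt, \\
 g(x) &= d_2 + \int_0^x \Nr_z(t)\bigl(g(t) - \Nr_z(t) f(t) - \Qr(t)\bigr)\, dt,
\end{align*}
which is of the form $Y(x) = Y_0(x) + \int_0^x K(t) Y(t)\, dt$ for $Y = (f,g)^T$ with inhomogeneity $Y_0 \in C[0,L)$ and matrix kernel whose entries involve $\Nr_z$ and $\Nr_z^2$. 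Since $\Nr_z \in L^2_{\loc}[0,L)$, the kernel entries lie in $L^1_{\loc}[0,L)$.

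Next I would argue existence and uniqueness on $C([0,c],\C^2)$ for each $c < L$ by standard Volterra iteration: iterating the contraction operator $n$ times produces a factor $(\int_0^c \|K(t)\|\,dt)^n / n!$ in the sup-norm estimate, so some iterate is contractive and Banach's fixed point theorem yields a unique solution $Y \in C([0,c],\C^2)$. Piecing these together for $c \uparrow L$ delivers a unique $Y \in C([0,L),\C^2)$, from which one recovers $f \in AC_{\loc}[0,L)$ and $f' = g - \Nr_z f - \Qr \in L^2_{\loc}[0,L)$, so that $f \in H^1_{\loc}[0,L)$. Reversing the computation shows $f$ satisfies~\eqref{eqnQD} with $\Delta_f = d_2$, hence solves~\eqref{eqnDEinho} with the prescribed initial conditions, and the uniqueness of $Y$ translates into the uniqueness of $f$.

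For the reality statement, observe that if $\omega$, $\chi$ are real-valued and $z \in \R$, then $\Nr_z$ and $\Qr$ are real-valued almost everywhere, so the Volterra system has real data and real kernel. Either directly (the iteration preserves real-valuedness) or by noting that $\overline{f}$ solves the same initial value problem and invoking uniqueness, one concludes that $f$ is real-valued.

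The only genuinely delicate point is the initial setup: since $\omega$ is merely a distribution rather than a measure, the classical integral equation of Krein--Langer type is not directly available, and one must trust that the equivalence between~\eqref{eqnDEinho}, \eqref{eqnDEinhoInt}, \eqref{eqnDEinhoIntNAD} and~\eqref{eqnQD} (already recorded before the lemma) really does put the problem into a form amenable to classical Volterra theory. Once that reformulation is in hand, the remainder of the argument is routine.
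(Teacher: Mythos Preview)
Your proposal is correct and follows essentially the same route as the paper: both introduce the quasi-derivative $g = f' + \Nr_z f + \Qr$ and convert the problem into a first-order system for the pair $(f,g)$ with locally integrable coefficients. The only cosmetic difference is that the paper writes this as a differential system $F' = AF + b$ and invokes a textbook existence/uniqueness result (Zettl, Chapter~1), whereas you write out the equivalent Volterra integral system and sketch the Picard iteration directly; these are two presentations of the same argument.
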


\begin{proof} 
 If $f$ is a solution of~\eqref{eqnDEinho} with the initial conditions~\eqref{eqnDEiv}, then the function  
 \begin{align*}
  F = \begin{pmatrix}  f \\ f' + \Nr_z f + \Qr \end{pmatrix} 
 \end{align*}
 has a locally absolutely continuous representative on $[0,L)$ in view of~\eqref{eqnQD}. 
 Moreover, this representative is clearly a solution of the initial value problem 
  \begin{align}\label{eqnDEequivsyst}
  F' & = \begin{pmatrix}  -\Nr_z & 1 \\ -\Nr^2_z & \Nr_z \end{pmatrix} F - \Qr \begin{pmatrix} 1 \\ \Nr_z \end{pmatrix}, & F(0) & = \begin{pmatrix} d_1 \\ d_2 \end{pmatrix}.
 \end{align}
 Since the coefficients of this first order system are locally integrable on $[0,L)$, the initial value problem~\eqref{eqnDEequivsyst} has a unique solution  (see, for example, \cite[Chapter~1]{ze05}), which proves the uniqueness part of the claim. 
 Conversely, if $F$ is a solution of the initial value problem~\eqref{eqnDEequivsyst}, then it is readily verified that $F_1$ is a solution of~\eqref{eqnDEinho} with the initial conditions~\eqref{eqnDEiv}, which proves the existence part of the claim.  
\end{proof}

In order to provide a representation of solutions to the inhomogeneous differential equation~\eqref{eqnDEinho}, we also consider the  corresponding homogeneous equation 
 \begin{align}\label{eqnDEho}
  -f'' = z\, \omega f + z^2 \dip f.
\end{align}
The Wronski determinant $W(\theta,\phi)$ of two solutions $\theta$, $\phi$ of this equation is defined as the number   
\begin{align}
W(\theta,\phi) = \theta(0)\phi'\NL-\theta'\NL\phi(0).
\end{align}

\begin{corollary}
 If $\theta$, $\phi$ are two solutions of the homogeneous differential equation~\eqref{eqnDEho}, then the Wronski determinant $W(\theta,\phi)$ is non-zero if and only if the functions $\theta$ and $\phi$ are linearly independent.
 Moreover, one has 
\begin{align}\label{eqnWronskiconst}
    \theta(x)\phi'(x)-\theta'(x)\phi(x)  = W(\theta,\phi)
\end{align}
for almost all $x\in[0,L)$. 
\end{corollary}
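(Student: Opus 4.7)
The plan is to prove the two assertions separately, with the existence/uniqueness result from Lemma~\ref{lemEE} handling the first and the regularized quasi-derivative identity~\eqref{eqnQD} handling the second.

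For the equivalence of $W(\theta,\phi)\neq 0$ with linear independence, I would argue as follows. If $\theta$ and $\phi$ are linearly dependent, then $W(\theta,\phi)=0$ is immediate from its definition. Conversely, if $W(\theta,\phi)=0$, then the vectors $(\theta(0),\theta'\NL)$ and $(\phi(0),\phi'\NL)$ in $\C^2$ are linearly dependent, so there exist $\alpha,\beta\in\C$ not both zero with $\alpha\theta(0)+\beta\phi(0)=0$ and $\alpha\theta'\NL+\beta\phi'\NL=0$. The function $\alpha\theta+\beta\phi$ is then a solution of the homogeneous equation~\eqref{eqnDEho} with vanishing initial data, and by the uniqueness part of Lemma~\ref{lemEE} it must be identically zero. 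This gives linear dependence of $\theta$ and $\phi$.

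The constancy statement~\eqref{eqnWronskiconst} is the main content; the obstacle is that $\theta'$ and $\phi'$ are only in $L^2_{\loc}$ and need not be absolutely continuous individually, so one cannot simply differentiate the Wronskian pointwise. The trick is to replace $\theta'$ and $\phi'$ by the regularized quasi-derivatives supplied by~\eqref{eqnQD}. Namely, setting $g_\theta(x):=\theta'(x)+\Nr_z(x)\theta(x)$ and $g_\phi(x):=\phi'(x)+\Nr_z(x)\phi(x)$, equation~\eqref{eqnQD} (with $\chi=0$ so that $\Qr=0$) shows that $g_\theta$ and $g_\phi$ are locally absolutely continuous on $[0,L)$ with derivatives $g_\theta'=\Nr_z\theta'$ and $g_\phi'=\Nr_z\phi'$ almost everywhere, and with initial values $g_\theta(0)=\theta'\NL$, $g_\phi(0)=\phi'\NL$. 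Substituting $\theta'=g_\theta-\Nr_z\theta$ and $\phi'=g_\phi-\Nr_z\phi$ into the Wronski expression yields
\begin{equation}
\theta(x)\phi'(x)-\theta'(x)\phi(x) = \theta(x)g_\phi(x)-g_\theta(x)\phi(x)
\end{equation}
for almost all $x\in[0,L)$, where the right-hand side is now a product of locally absolutely continuous functions.

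It then remains to differentiate this locally absolutely continuous right-hand side and verify that the derivative vanishes almost everywhere. A direct computation using $g_\theta'=\Nr_z\theta'$, $g_\phi'=\Nr_z\phi'$ gives
\begin{equation}
\bigl(\theta g_\phi-g_\theta\phi\bigr)' = \theta' g_\phi+\theta\Nr_z\phi'-\Nr_z\theta'\phi-g_\theta\phi',
\end{equation}
and replacing $g_\theta=\theta'+\Nr_z\theta$ and $g_\phi=\phi'+\Nr_z\phi$ one checks that all terms cancel. Hence $\theta g_\phi-g_\theta\phi$ is constant on $[0,L)$, and evaluating at $x=0$ gives exactly $\theta(0)\phi'\NL-\theta'\NL\phi(0)=W(\theta,\phi)$, which yields~\eqref{eqnWronskiconst}. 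The main obstacle, as noted, is handling the low regularity of $\theta'$ and $\phi'$; this is precisely what the regularization via $g_\theta$ and $g_\phi$ is designed to circumvent.
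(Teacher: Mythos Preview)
Your proof is correct and follows essentially the same route as the paper. Both arguments hinge on replacing the Wronski expression by the regularized form $\theta(\phi'+\Nr_z\phi)-(\theta'+\Nr_z\theta)\phi$, which is locally absolutely continuous; the paper obtains constancy by invoking the standard Wronskian result \cite[Theorem~1.2.2]{ze05} for the equivalent first-order system~\eqref{eqnDEequivsyst} (whose coefficient matrix has trace zero), while you simply carry out that differentiation by hand.
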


\begin{proof}
  It is readily verified using Lemma~\ref{lemEE} that  the Wronski determinant $W(\theta,\phi)$ is non-zero if and only if $\theta$ and $\phi$ are linearly independent.
  Moreover, applying \cite[Theorem~1.2.2]{ze05} to the equivalent system~\eqref{eqnDEequivsyst} in the proof of Lemma~\ref{lemEE} shows that  
  \begin{align*}
   \theta(\phi'+\Nr_z \phi) - (\theta'+\Nr_z\theta)\phi  
  \end{align*}
  is constant on $[0,L)$ and equal to $W(\theta,\phi)$, which proves the claim. 
\end{proof}


Two linearly independent solutions $\theta$, $\phi$ of the homogeneous differential equation~\eqref{eqnDEho} are called {\em a fundamental system} if their Wronski determinant $W(\theta,\phi)$ equals one.  
One notes that fundamental systems always exist due to Lemma~\ref{lemEE}. 
  
\begin{corollary}\label{cor:inhom}
 If $\theta$, $\phi$ are a fundamental system of the homogeneous differential equation~\eqref{eqnDEho}, then any solution $f$ of the inhomogeneous differential equation~\eqref{eqnDEinho} can be written as 
\begin{align}\label{eq:inhom}
 f(x) = d_1 \theta(x) + d_2 \phi(x) - \int_0^x \Qr(t) \left(\theta(x)\phi'(t) - \theta'(t)\phi(x)\right)dt, \quad x\in[0,L),
\end{align}
 for some constants $d_1$, $d_2\in\C$. 
\end{corollary}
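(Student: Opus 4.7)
The strategy is to reduce the claim to the uniqueness half of Lemma~\ref{lemEE}. Writing $g(x)$ for the right-hand side of \eqref{eq:inhom}, it suffices to show that (a) $g$ is itself a solution of \eqref{eqnDEinho} for arbitrary $d_1,d_2\in\C$, and (b) the initial data $(g(0),g'\NL)$ can be prescribed to equal $(f(0),f'\NL)$ by a suitable choice of $d_1,d_2$; the uniqueness part of Lemma~\ref{lemEE} then forces $f=g$ and thereby gives the representation.

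For (a) I would verify the quasi-derivative formulation \eqref{eqnQD}. Introducing the locally absolutely continuous functions
$$A(x) = \int_0^x \Qr(t)\phi'(t)\,dt, \qquad B(x) = \int_0^x \Qr(t)\theta'(t)\,dt,$$
one rewrites $g = d_1\theta + d_2\phi - \theta A + \phi B$, and a direct (almost everywhere) differentiation combined with the pointwise Wronskian identity $\theta\phi'-\theta'\phi=1$ from \eqref{eqnWronskiconst} yields
$$g'(x) = d_1\theta'(x) + d_2\phi'(x) - \theta'(x)A(x) + \phi'(x)B(x) - \Qr(x)$$
almost everywhere, the $-\Qr$ term coming from the diagonal value $t=x$. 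Using the locally absolutely continuous regularized quasi-derivatives $\theta^{[1]}=\theta'+\Nr_z\theta$ and $\phi^{[1]}=\phi'+\Nr_z\phi$ of the homogeneous solutions (whose defining identities come from \eqref{eqnQD} applied to $\theta$ and $\phi$), integration by parts applied to $A(x)\theta^{[1]}(x)$ and $B(x)\phi^{[1]}(x)$ reduces the residue
$$g'(x) + \Nr_z(x)g(x) + \Qr(x) - \int_0^x \Nr_z(t)g'(t)\,dt - d_1\theta'\NL - d_2\phi'\NL$$
to $\int_0^x \Qr(t)\bigl(\theta'(t)\phi^{[1]}(t)-\phi'(t)\theta^{[1]}(t)+\Nr_z(t)\bigr)\,dt$. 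The integrand vanishes identically, because a short calculation gives $\theta'\phi^{[1]}-\phi'\theta^{[1]}=\Nr_z(\theta'\phi-\phi'\theta)=-\Nr_z$ by \eqref{eqnWronskiconst}. Hence $g$ satisfies \eqref{eqnQD} with constant $\Delta_g = d_1\theta'\NL + d_2\phi'\NL$, which simultaneously proves that $g$ is a solution and identifies $g'\NL$.

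Part (b) is then immediate: the integral in \eqref{eq:inhom} vanishes at $x=0$, so $g(0) = d_1\theta(0) + d_2\phi(0)$, and together with the expression for $g'\NL$ from step (a) the linear system matching $(f(0),f'\NL)$ has coefficient determinant $W(\theta,\phi)=1$ and hence a unique solution $(d_1,d_2)$. The main obstacle is the bookkeeping in step (a): tracking the diagonal boundary contribution $-\Qr$ and verifying the cancellation of all cross terms. The conceptual heart of the argument is the ``mixed Wronskian'' identity $\theta'\phi^{[1]}-\phi'\theta^{[1]}=-\Nr_z$, which expresses how the ordinary derivatives and the absolutely continuous quasi-derivatives conspire to reproduce the classical variation-of-parameters formula despite the low regularity of $\omega$.
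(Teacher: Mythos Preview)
Your argument is correct. The key identity $\theta'\phi^{[1]}-\phi'\theta^{[1]}=-\Nr_z$ does exactly what you claim, and together with the integration by parts it collapses the residue term to zero; the initial-data matching in (b) is then routine because $W(\theta,\phi)=1$.

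The paper, however, takes a much shorter route. Recall from the proof of Lemma~\ref{lemEE} that the scalar problem is equivalent to the first order system~\eqref{eqnDEequivsyst}. The paper simply observes that
\[
\begin{pmatrix} \theta & \phi \\ \theta'+\Nr_z\theta & \phi'+\Nr_z\phi \end{pmatrix}
\]
is a fundamental matrix for the homogeneous part of that system and then invokes the textbook variation-of-parameters formula for linear first order systems; unpacking the first component of the resulting expression gives~\eqref{eq:inhom} at once. So the paper outsources all the bookkeeping you carry out by hand---the diagonal boundary term, the mixed Wronskian cancellation---to the standard theory of systems with locally integrable coefficients. Your approach, by contrast, stays entirely within the scalar quasi-derivative calculus of~\eqref{eqnQD} and never passes through~\eqref{eqnDEequivsyst}. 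That makes it more self-contained and arguably more transparent about \emph{why} the low-regularity variation-of-parameters formula holds, at the cost of the explicit computations you flag as the main obstacle.
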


\begin{proof}
 Upon noting that the matrix function 
 \begin{align*}
  \begin{pmatrix} \theta & \phi \\ \theta'+\Nr_z\theta & \phi'+\Nr_z\phi \end{pmatrix} 
 \end{align*}
 is a fundamental matrix for the homogeneous first order system corresponding to~\eqref{eqnDEequivsyst}, the claim follows from the usual variation of parameters formula (see, for example, \cite[Theorem~1.3.1]{ze05}). 
\end{proof}


As a final result of this section, we show that the solutions of the inhomogeneous differential equation~\eqref{eqnDEinho} with fixed initial conditions of the form~\eqref{eqnDEiv} depend analytically on the complex spectral parameter $z\in\C$. 

\begin{corollary}\label{corSolEnt}
 Fix $d_1$, $d_2\in\C$ and  for every $z\in\C$ let $f_z$ be the solution of the inhomogeneous differential equation~\eqref{eqnDEinho} with the initial conditions~\eqref{eqnDEiv}.  
 Then for every $g\in L^2[0,L)$ with compact support, the function
 \begin{align}\label{eq:fz}
   z\mapsto \int_0^L g(x)f_z'(x)dx 
  \end{align}
  is entire. 
  In particular, for every $x\in[0,L)$ the function $z\mapsto f_z(x)$ is entire and locally uniformly bounded as long as $x$ varies in compact subsets of $[0,L)$.  
\end{corollary}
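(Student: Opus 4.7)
The plan is to use the equivalent first order system~\eqref{eqnDEequivsyst} from the proof of Lemma~\ref{lemEE}, since its coefficient matrix and inhomogeneity are polynomial in $z$. Indeed, if $\Wr$ denotes the normalized anti-derivative of $\omega$ and $\Vr(x) = \dip([0,x))$, then $\Nr_z = z\Wr + z^2 \Vr$, so both $A(x,z)$ and the forcing $b(x,z) = -\Qr(x)(1,\Nr_z(x))^\top$ are polynomials in $z$ whose coefficients lie in $L^1_{\loc}[0,L)$.

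First I would fix $x_0\in[0,L)$ and a compact set $K\subset\C$, and run Picard iteration for the system~\eqref{eqnDEequivsyst} on $[0,x_0]$. Set $F_0(x,z) = (d_1,d_2)^\top$ and
\begin{align*}
 F_{n+1}(x,z) = \begin{pmatrix}d_1 \\ d_2\end{pmatrix} + \int_0^x \bigl[A(t,z)F_n(t,z) + b(t,z)\bigr]\,dt.
\end{align*}
Since $A(\cdot,z)$ depends polynomially on $z$ with $L^1([0,x_0])$-valued coefficients, the quantity $M_K = \sup_{z\in K}\|A(\cdot,z)\|_{L^1([0,x_0])}$ is finite; the usual norm estimate
\begin{align*}
 \|F_{n+1}(x,z)-F_n(x,z)\| \le \frac{M_K^n}{n!}\,C_K,\quad x\in[0,x_0],~z\in K,
\end{align*}
yields uniform convergence $F_n \to F$ on $[0,x_0]\times K$. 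Each $F_n$ is polynomial (hence entire) in $z$ for every fixed $x$, so the uniform limit $F(x,\redot)$ is entire in $z$ and locally uniformly bounded as $x$ ranges over compact subsets of $[0,L)$. Since $f_z = F_1$, this proves the second assertion.

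For the first assertion, let $\supp g \subset [0,x_0]$ with $x_0<L$. Recalling that a solution satisfies $f_z' = F_2(\redot,z) - \Nr_z\,F_1(\redot,z) - \Qr$ almost everywhere on $[0,L)$, we may write
\begin{align*}
 \int_0^L g(x)f_z'(x)\,dx = \int_0^{x_0} g(x)\bigl[F_2(x,z) - \Nr_z(x)F_1(x,z) - \Qr(x)\bigr]\,dx.
\end{align*}
The integrand is entire in $z$ for almost every $x\in[0,x_0]$, and by the locally uniform bound on $F(x,\redot)$ together with $g\in L^2[0,x_0]$ and $\Nr_z,\Qr\in L^2_{\loc}[0,L)$, it is dominated for $z$ in compact subsets of $\C$ by an $L^1$-function of $x$. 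Analyticity of the $z$-integral then follows by Morera's theorem (Fubini against small contour integrals) or equivalently by passing to the limit through the Picard approximants.

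The main obstacle is the bookkeeping in the Gronwall-type estimate, since $A(\cdot,z)$ is only $L^1$ in $x$ and has polynomial dependence on $z$; one has to make sure that the $L^1$-bound $M_K$ is uniform on compact $z$-sets (which it is, because $M_K \le |z|\,\|\Wr\|_{L^1([0,x_0])} + |z|^2\,\Vr(x_0)$). Once this uniformity is secured, the argument is completely standard and yields both the entireness and the local uniform boundedness in one stroke.
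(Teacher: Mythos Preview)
Your proof is correct and follows essentially the same route as the paper: reduce to the equivalent first order system~\eqref{eqnDEequivsyst}, use that its coefficients are polynomial in $z$ with $L^1_{\loc}$ entries to obtain entireness and locally uniform bounds for $F_z(x)$, and then express $\int g f_z'$ via the identity $f_z' = F_{z,2} - \Nr_z F_{z,1} - \Qr$. The only differences are that the paper simply cites \cite[Theorem~1.5.1]{ze05} for the analytic dependence and bounds where you spell out the Picard iteration, and that your parenthetical bound for $M_K$ is too small (the entry $-\Nr_z^2$ of $A$ contributes terms up to order $|z|^4$), though this does not affect the argument since polynomial dependence on $z$ is all that is needed.
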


\begin{proof}
  If $F_z$ denotes the solution of the corresponding initial value problem for the equivalent system~\eqref{eqnDEequivsyst}, then the function $z\mapsto F_z(x)$ is clearly entire for every $x\in[0,L)$. 
  Moreover, locally uniform bounds are available (see, for example, \cite[Theorem~1.5.1]{ze05}) as long as $x$ varies in compact subsets of $[0,L)$. 
  Since we have 
  \begin{align*}
  \int_0^L g(x)f_z'(x)dx = \int_0^L g(x) \left(F_{z,2}(x) - \Nr_z(x)F_{z,1}(x) - \Qr(x)\right)dx, \quad z\in\C, 
  \end{align*}
  this shows that the function in~\eqref{eq:fz} is entire as well. 
\end{proof}

  \section{Realization as a self-adjoint linear relation}

 In the present section we will introduce a self-adjoint linear relation associated with the differential equation~\eqref{eqnDEinho}. 
 To this end, we first consider the space 
 \begin{align}
 \cH = \Hasto\times L^2([0,L);\dip),
\end{align}
which turns into a Hilbert space when equipped with the usual scalar product
\begin{align}
 \spr{f}{g}_{\cH} = \int_0^L f_1'(x) g_1'(x)^\ast dx + \int_{[0,L)} f_2(x) g_2(x)^\ast d\dip(x), \quad f,\, g\in \cH.
\end{align} 
 The respective components of some function $f\in\cH$ are always denoted by adding subscripts, that is, with $f_1$ and $f_2$. 
 For future purposes, we note that point evaluations of the first component are clearly continuous on $\cH$ in view of~\eqref{eqnFuncHastGrow}. 
 Given some $x\in[0,L)$, we will denote with $\delta_{x}$ the unique function in $\cH$ such that 
 \begin{align}\label{eqnPEdeltacpm}
  \spr{f}{\delta_{x}}_{\cH} = f_1(x), \quad f\in\cH. 
 \end{align}
 It is readily verified that this function is explicitly given by 
\begin{align}
 \delta_x(t) =  \begin{pmatrix} 1 \\ 0 \end{pmatrix} \begin{cases} t \left(1-\frac{x}{L}\right), & t\in[0,x), \\ x\left(1-\frac{t}{L}\right), & t\in[x,L). \end{cases}
\end{align}

We now introduce the linear relation $\T$ in the Hilbert space $\cH$ by saying that some pair $(f,g)\in\cH\times\cH$ belongs to $\T$ if and only if 
\begin{align}\label{eqnDEre1}
-f_1'' & =\omega g_{1} + \dip g_{2}, &  \dip f_2 & =\dip g_{1},
\end{align}
holds. 
In order to be precise, the right-hand side of the first equation in~\eqref{eqnDEre1} has to be understood as the $H^{-1}_{\loc}[0,L)$ distribution given by 
\begin{align}
 h \mapsto \omega(g_1h) + \int_{[0,L)} g_2 h\, d\dip. 
\end{align} 
 Moreover, let us point out explicitly that the second equation in~\eqref{eqnDEre1} holds if and only if $f_2(x)=g_1(x)$ for almost all $x\in[0,L)$ with respect to $\dip$. 

Although the linear relation $\T$ is in general not (the graph of) an operator, we will employ the following convenient notation for elements of $\T$: 
Given a pair $\f\in\T$, we will denote its first component with $f$ and its second component with $\tau f$. 

 \begin{theorem}\label{thmTsa}
  The linear relation $\T$ is self-adjoint in $\cH$. 
 \end{theorem}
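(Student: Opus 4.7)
My plan follows the standard criterion for symmetric linear relations: first establish $\T\subseteq\T^*$, then verify $\ran(\T-z)=\cH$ for some $z\in\C\setminus\R$.

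\textbf{Symmetry.} For $\f,\g\in\T$ I derive a Green's identity giving $\spr{\tau f}{g}_\cH=\spr{f}{\tau g}_\cH$. The defining distributional equation $-f_1''=\omega(\tau f)_1+\dip(\tau f)_2$ a priori pairs only with $h\in H^1_\cc[0,L)$; I extend it to pair with $(\tau g)_1^{*}\in\Hasto$ by density of $C^\infty_\cc((0,L))$ and the growth bound~\eqref{eqnFuncHastGrow}, which promotes $\Hasto$-convergence to locally uniform convergence and also places $(\tau g)_1^{*}=g_2^{*}$ on $\supp\dip$ inside $L^2([0,L);\dip)$. Performing the symmetric pairing of $-g_1''$ with $(\tau f)_1^{*}$, conjugating and subtracting, the two $\omega$-terms cancel --- $\omega$ is real-valued so conjugation commutes with it, and the products $(\tau f)_1(\tau g)_1^{*}$ appearing in both lines coincide --- leaving only a $\dip$-integral that collapses against the measure part of the Hilbert-space inner product via $f_2=(\tau f)_1$ and $g_2=(\tau g)_1$ $\dip$-almost everywhere.

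\textbf{Range condition.} Fix $z\in\C\setminus\R$ and $h\in\cH$. Writing $(\tau f)_j=zf_j+h_j$ for $j=1,2$ and substituting $f_2=(\tau f)_1=zf_1+h_1$ $\dip$-a.e.\ into~\eqref{eqnDEre1}, the requirement $(f,\tau f)\in\T$ reduces to the inhomogeneous equation~\eqref{eqnDEinho} at spectral parameter $z$ with inhomogeneity $\chi=\omega h_1+\dip(zh_1+h_2)\in H^{-1}_{\loc}[0,L)$, to be solved with $f_1\in\Hasto$. Corollary~\ref{cor:inhom} furnishes a two-parameter family of $H^1_{\loc}$-solutions. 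The initial condition $f_1(0)=0$ eliminates one parameter; symmetry (Step~1) forces uniqueness of the remaining one, because any nonzero $\Hasto$-solution $u$ of the homogeneous equation at non-real $z$ would give an eigen-element $(u,\,zu)\in\T$ of a symmetric relation at a non-real eigenvalue, a contradiction. Existence is then supplied by the variation-of-parameters formula from Corollary~\ref{cor:inhom}, selecting the Weyl-type solution of the homogeneous equation decaying at $L$.

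\textbf{Main obstacle.} The principal difficulty is the endpoint analysis at $L$: constructing the Weyl-type solution of the homogeneous equation in $\Hasto$ at non-real $z$ and checking that the variation-of-parameters integral actually yields $f_1\in\Hasto$ (that is, $f_1'\in L^2[0,L)$ and, when $L<\infty$, $f_1(L-)=0$). This is the low-regularity analogue of Weyl's limit-point analysis, complicated by $\omega$ being merely a distribution in $H^{-1}_{\loc}[0,L)$ rather than a signed measure, and is exactly the left-definite-type construction the authors refer to in the introduction. The density step in the symmetry argument is also delicate: since $\omega$ is not intrinsically defined on $\Hasto$, one must verify that $\omega((\tau f)_1 h_n)$ converges as $h_n\to(\tau g)_1^{*}$, which uses the $L^2_{\loc}$-regularity of the normalized anti-derivative of $\omega$ together with the compact support of each $h_n$.
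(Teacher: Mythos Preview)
Your outline follows a different route from the paper, and both of its two steps contain gaps that are more serious than you indicate.

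\textbf{Symmetry.} The density argument does not close. You want to pass from test functions $h\in H^1_\cc[0,L)$ to $h=(\tau g)_1^\ast\in\Hasto$, but the individual $\omega$-terms need not converge: writing $\omega((\tau f)_1 h_n)=-\int_0^L \Wr\,((\tau f)_1 h_n)'\,dx$, the integrand involves $\Wr\,(\tau f)_1' h_n$ and $\Wr\,(\tau f)_1 h_n'$, and since $\Wr$ is only in $L^2_{\loc}[0,L)$ while $(\tau f)_1$ may grow like $\sqrt{x(1-x/L)}$ by~\eqref{eqnFuncHastGrow}, neither piece is globally controlled. One cannot appeal to ``cancellation of the $\omega$-terms'' before the limit, because the two equations are being tested against \emph{different} functions $(\tau g)_1^\ast$ and $(\tau f)_1^\ast$; if one instead uses a common cutoff $\eta_n$ so that the $\omega$-terms cancel for each $n$, the cross-term $\int f_1'\,\eta_n'\,(\tau g)_1^\ast\,dx$ need not tend to zero (for $L=\infty$ and $\eta_n$ supported on $[n,2n]$ with $|\eta_n'|\sim n^{-1}$, the bound from~\eqref{eqnFuncHastGrow} gives only $O(1)$). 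The paper avoids this entirely: it integrates by parts on $[0,x)$ using the quasi-derivative $f_1^{[1]}=f_1'+\Wr\,\tau f_1$, obtains an exact identity with a boundary term $\tau f_1(x)g_1^{[1]}(x)^\ast-f_1^{[1]}(x)\tau g_1(x)^\ast$, observes that the $\Wr$-contributions in this Wronskian cancel pointwise, and then shows the remainder $\tau f_1\,g_1'-f_1'\,\tau g_1$ vanishes as $x\to L$ along a sequence by an $L^1$-near-$L$ estimate.

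\textbf{Self-adjointness.} Your range criterion $\ran(\T-z)=\cH$ requires a Weyl solution $\psi(z,\cdot)\in H'[0,L)\cap L^2([0,L);\dip)$ at non-real $z$. In the paper's logical order this is Lemma~\ref{lemWS}, proved \emph{after} Theorem~\ref{thmTsa} and \emph{using} self-adjointness (via the auxiliary relation $\T_0$ in $\cK$). So as stated your plan is circular. One could try to establish the Weyl solution independently via nested Weyl disks---the paper remarks this is possible---but that is substantial work in the present generality and you have not supplied it. The paper bypasses this completely: to show $\T^\ast\subseteq\T$, it picks $(f,f_\tau)\in\T^\ast$, and for each $h\in\cH$ \emph{vanishing near $L$} constructs an explicit preimage $(g,h)\in\T$ (trivial because the anti-derivative $\Hr$ of $\omega h_1+\dip h_2$ is eventually constant); pairing $\spr{f}{h}_\cH=\spr{f_\tau}{g}_\cH$ and varying $h$ then yields~\eqref{eqnDEre1} for $(f,f_\tau)$. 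No endpoint analysis and no Weyl solution are needed.

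In short, the paper's argument is both simpler and logically prior to the resolvent machinery; your proposal would require an independent Weyl-circle construction and a repaired symmetry proof to become a valid alternative.
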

 
\begin{proof}
 Let us begin with proving that $\T$ is symmetric in $\cH$, that is, we have 
 \begin{align*}
      \spr{\tau f}{g}_{\cH}=  \spr{f}{\tau g}_{\cH}, \quad \f,\,\g\in\T. 
 \end{align*}
To this end, we let $\f\in\T$ and first introduce the auxiliary function 
\begin{align*}
 f_1^\qd(x) = f_1'\NL + \int_0^x \Wr(t) \tau f_1'(t)dt - \int_{[0,x)} \tau f_2\, d\dip, \quad x\in[0,L),  
\end{align*}
where $\Wr$ denotes the normalized anti-derivative of $\omega$. 
From the differential equation which $f_1$ satisfies, cast in the form~\eqref{eqnQD}, we then get 
\begin{align}\label{eqnTsaQD}
 f_1'(x) +  \Wr(x)\tau f_1(x) = f_1^\qd(x)  
\end{align}
for almost all $x\in[0,L)$. 
Employing this and an integration by parts (use, for example, \cite[Exercise~5.8.112]{bo07}, \cite[Theorem~21.67]{hest65}), one obtains for every $h\in\cH$
\begin{align*}
 \int_0^x  h_1'(t) f_1'(t) dt & = h_1f_1^\qd|_{0}^x - \int_0^x \Wr(t) (h_1\tau f_1)'(t)dt + \int_{[0,x)} h_1 \tau f_2\, d\dip, \quad x\in[0,L).
\end{align*}
Now let $\g\in\T$, choose $h=\tau g^\ast$ in the equation above, subtract the corresponding equation with the roles of $\f$ and $\g^\ast$ interchanged, take also into account the second equation in~\eqref{eqnDEre1} and finally let $x\rightarrow L$ to end up with  
 \begin{align*}
   \spr{\tau f}{g}_{\cH} = \spr{f}{\tau g}_{\cH} +  \lim_{x\rightarrow L} \tau f_1(x)g_1^\qd(x)^\ast - f_1^\qd(x) \tau g_1(x)^\ast.
 \end{align*}
 We are left to prove that the limit (which is already known to exist) is actually zero. 
 In view of~\eqref{eqnTsaQD} and the corresponding equation for $\g$ as well as the estimate~\eqref{eqnFuncHastGrow} for $\tau f_1$, $\tau g_1$ one infers that the function    
 \begin{align*}
  \frac{|\tau f_1(x)g_1^\qd(x)^\ast-f_1^\qd(x)\tau g_1(x)^\ast|^2}{x\left(1-\frac{x}{L}\right)}, \quad x\in(0,L), 
 \end{align*}
 is integrable near $L$ and it remains to note that 
 this guarantees the existence of an increasing sequence $\lbrace x_n\rbrace_{n=1}^\infty$ in $[0,L)$ with $x_n\rightarrow L$ as $n\rightarrow\infty$ such that 
 \begin{align*}
  \tau f_1(x_n)g_1^\qd(x_n)^\ast-f_1^\qd(x_n)\tau g_1(x_n)^\ast \rightarrow 0  
 \end{align*}
 as $n\rightarrow\infty$, which proves that $\T$ is symmetric in $\cH$.

 In order to show that $\T$ is even self-adjoint, we fix some $(f,f_\tau)\in\T^\ast$ and let $h\in\cH$. 
 We are first going to show that there is a $g\in\cH$ such that $(g,h)\in\T$ provided that $h$ vanishes near $L$. 
 In fact, the first component of this $g$ is given by 
 \begin{align*}
  g_1(x) & =d x - \int_0^x \Hr(t)dt, \quad x\in[0,L),  & d & =  \lim_{x\rightarrow L} \frac{1}{x}\int_0^x \Hr(t)dt,
 \end{align*}
 where $\Hr$ is the normalized anti-derivative of $\omega h_1 + \dip h_2$. 
 Since $\Hr$ is constant near $L$ (due to the fact that $h$ vanishes near $L$), the function $g_1$ is well-defined, belongs to $\Hasto$ and is readily verified to satisfy the differential equation
 \begin{align*}
  -g_1'' = \omega h_1 + \dip h_2 
 \end{align*}
 in view of~\eqref{eqnDEinhoIntNAD}. 
 Upon setting $g_2(x) = h_1(x)$ for almost all $x\in[0,L)$ with respect to $\dip$, we conclude that $(g,h)\in\T$. 
 As a consequence, we have 
 \begin{align*}
  \spr{f}{h}_{\cH} & = \spr{f_\tau}{g}_{\cH}  = -\int_0^L \Wr(x) (f_{\tau,1} h_1^\ast)'(x)dx + \int_{[0,L)}  f_{\tau,2} h_1^\ast + f_{\tau,1} h_2^\ast\, d\dip
 \end{align*}
 where we used the identity~\eqref{eqnTsaQD} rewritten for $(g,h)\in\T$ as well as an integration by parts, also taking into account the second equation in~\eqref{eqnDEre1}.  
 Since the above equality holds for arbitrary functions $h\in\cH$ vanishing near $L$, we infer that 
 \begin{align}\label{eqnRefftau}
  -f_1'' & = \omega f_{\tau,1} + \dip f_{\tau,2}, & \dip f_2 & = \dip f_{\tau,1}. 
 \end{align}
 In fact, upon taking functions $h$ for which $h_2$ vanishes identically, one arrives at the differential equation in~\eqref{eqnRefftau} cast in the form used in Definition~\ref{defSolution}. 
 Let us note that it does not matter that all our test functions $h_1$ here vanish at zero, opposed to the ones in Definition~\ref{defSolution}.
 On the other side, upon taking functions $h$ for which $h_1$ vanishes identically, one readily infers the second equation in~\eqref{eqnRefftau}.  
 Of course, this means nothing but $(f,f_\tau)\in\T$ and thus concludes the proof. 
 \end{proof}
 
 The linear relation $\T$ is indeed closely related to the differential equation~\eqref{eqnDEinho}. 
For any $z\in\C$, a pair $(f,g)\in\cH\times\cH$ belongs to $\T-z$ if and only if 
\begin{align}\label{eqnTloc-z}
 -f_1'' & = z\, \omega f_1 +z^2 \dip f_1 + \omega g_1 + z\,\dip g_1 + \dip g_2, & \dip f_2 = z\,\dip f_1 +  \dip g_1. 
\end{align}
This observation shows that some $f\in\cH$ belongs to $\ker{\T-z}$ if and only if $f_1$ is a solution of the homogeneous differential equation~\eqref{eqnDEho} and $\dip f_2 = z\, \dip f_1$. 
In other words, some $z\in\C$ is an eigenvalue of $\T$ if and only if there is a non-trivial solution $\phi$ of the homogeneous differential equation~\eqref{eqnDEho} such that $\phi$ lies in $\Hasto$ and $z\phi$ lies in $L^2([0,L);\dip)$.
 Furthermore, it is readily deduced from these facts that every eigenvalue of $\T$ is non-zero and simple in view of Lemma~\ref{lemEE}. 
   
   \begin{lemma}\label{lemWS}
   If $z$ belongs to the resolvent set of $\T$, then there is a (up to scalar multiples) unique non-trivial solution $\psi$ of the homogeneous differential equation~\eqref{eqnDEho} such that $\psi$ lies in $H'[0,L)$ and $z\psi$ lies in $L^2([0,L);\dip)$. 
  \end{lemma}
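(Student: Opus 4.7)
The plan is to treat the uniqueness and existence parts of the statement separately.

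For uniqueness, suppose $\psi_1, \psi_2$ are two linearly independent solutions of~\eqref{eqnDEho} satisfying the stated conditions. Since the solution space of~\eqref{eqnDEho} is two-dimensional by Lemma~\ref{lemEE}, there is a nontrivial linear combination $\psi = \alpha\psi_1 + \beta\psi_2$ with $\psi(0) = 0$. Then $\psi \in \Hasto$ and $z\psi \in L^2([0,L);\dip)$, so $f := (\psi, z\psi) \in \cH$. A direct verification of the defining relations~\eqref{eqnDEre1} with $g := zf$ shows that $(f, zf) \in \T$, so $f \in \ker{\T - z}$. Since $z$ belongs to the resolvent set of $\T$, this kernel is trivial, forcing $\psi = 0$ and contradicting linear independence.

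For existence, the bounded resolvent $(\T - z)^{-1}$ on $\cH$ provides the key tool. I would fix a nonzero $g \in \cH$ whose components both vanish outside some compact subinterval $[c_1, c_2] \subset (0, L)$, and set $F := (\T - z)^{-1} g$. Reading~\eqref{eqnTloc-z} on $(c_2, L)$, where $g$ vanishes, shows that $F_1$ solves the homogeneous equation~\eqref{eqnDEho} there and $F_2 = z F_1$ holds $\dip$-almost everywhere on $(c_2, L)$. By Lemma~\ref{lemEE}, the restriction $F_1|_{(c_2,L)}$ extends uniquely to a solution $\psi$ of~\eqref{eqnDEho} on all of $[0, L)$. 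The inclusion $F_1 \in \Hasto$ gives $F_1' \in L^2$ near $L$ and $F_1(L) = 0$ when $L < \infty$, hence $\psi \in H'[0, L)$. Likewise $z\psi = F_2$ $\dip$-a.e.\ on $(c_2, L)$, while on the compact set $[0, c_2]$ the continuous function $\psi$ is bounded and $\dip$ is finite, giving $z\psi \in L^2([0, L); \dip)$.

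The main obstacle is to arrange that $\psi \not\equiv 0$ for a suitable $g$, i.e.\ that $F_1 \not\equiv 0$ on $(c_2, L)$. Assume, for contradiction, that $F_1 \equiv 0$ on $(c_2, L)$ for every admissible $g$. Using the representers $\delta_x$ from~\eqref{eqnPEdeltacpm} together with the self-adjointness of $\T$, one has $F_1(x) = \spr{g}{K_x}_{\cH}$ for $x \in (c_2, L)$, where $K_x := (\T - z^\ast)^{-1}\delta_x$. The hypothesis then forces $K_x$ to be $\cH$-orthogonal to every element of $\cH$ whose components are supported in $[c_1, c_2]$; unpacking this orthogonality in the scalar product of $\cH$ yields that $(K_x)_1$ must be affine on $[c_1, c_2]$ and $(K_x)_2$ must vanish $\dip$-almost everywhere there. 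Plugging this rigid structure back into the system characterizing $K_x$ as $(\T - z^\ast)^{-1}\delta_x$ (in particular the identity $(K_x)_2 = z^\ast (K_x)_1 + (\delta_x)_1$ $\dip$-a.e.\ obtained from $(K_x, z^\ast K_x + \delta_x) \in \T$) and letting $(c_1, c_2)$ range so as to exhaust $(0, L)$ eventually pins down $K_x = -\delta_x/z^\ast$; but then $\T\delta_x = 0$, which is impossible since $(\delta_x)_1$ is piecewise linear with a genuine kink at $x$ so that its distributional second derivative carries a Dirac-type singularity. This contradiction yields the desired nontrivial $\psi$ and completes the argument.
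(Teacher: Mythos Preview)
Your uniqueness argument is correct and in fact more direct than the paper's: two solutions with the stated integrability would combine to an element of $\ker{\T-z}$, which must be trivial. The paper instead obtains both existence and uniqueness at once from an abstract deficiency-index count in a larger Hilbert space $\cK=H'[0,L)\times L^2([0,L);\dip)$, where $\cH$ sits as a closed subspace of codimension one.

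The existence argument, however, has a genuine gap. The step ``letting $(c_1,c_2)$ range so as to exhaust $(0,L)$ eventually pins down $K_x=-\delta_x/z^\ast$'' is not justified and is in fact false. For a fixed $x\in(0,L)$ you only get orthogonality of $K_x$ against $g$ supported in intervals $[c_1,c_2]$ with $c_2<x$; this tells you that $(K_x)_1$ is affine on $(0,x)$ (say $(K_x)_1(t)=\alpha t$) and $(K_x)_2=0$ $\dip$-a.e.\ on $(0,x)$, but gives no information whatsoever about $K_x$ on $(x,L)$. Even on $(0,x)$ the slope $\alpha$ is only constrained through the relation $(K_x)_2=z^\ast(K_x)_1+(\delta_x)_1$ $\dip$-a.e.\ and through $\omega(z^\ast(K_x)_1+(\delta_x)_1)=0$ as a distribution, both of which are vacuous when $\omega$ and $\dip$ vanish on $(0,x)$.

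A concrete counterexample: take $\omega=0$ and $\dip=0$. Then $\cH=\Hasto$, and $(f,g)\in\T$ forces $f_1''=0$ with $f_1(0)=0$ (and $f_1(L)=0$ if $L<\infty$), hence $f_1=0$. Thus $\T=\{0\}\times\cH$, every $z\in\C$ lies in the resolvent set, and $(\T-z)^{-1}$ is the zero operator. Consequently $F=(\T-z)^{-1}g=0$ for every $g$, so your extracted $\psi$ is identically zero and $K_x=0\neq-\delta_x/z^\ast$. Nevertheless the lemma holds: $\psi(t)=1-t/L$ (respectively $\psi\equiv1$ when $L=\infty$) is a nontrivial solution of~\eqref{eqnDEho} lying in $H'[0,L)$. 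The point is that when $\mul{\T}$ is large, the resolvent in $\cH$ simply does not see the Weyl solution; the paper circumvents this by passing to the larger space $\cK$ and invoking that $\dim\ker{\T_{\mathrm{max}}-z}=1$ for $z$ in the resolvent set of a self-adjoint extension there. An alternative route, noted after the lemma in the paper, is the classical Weyl circles method (compare Lemma~\ref{lemWTasQ}).
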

 
  \begin{proof}
   We consider the auxiliary space 
   \begin{align*}
    \cK = H'[0,L) \times L^2([0,L);\dip), 
   \end{align*}
   which turns into a Hilbert space when equipped with the scalar product 
    \begin{align*}
  \spr{f}{g}_{\cK} =  f_1(0)g_1(0)^\ast + \int_0^L f_1'(x) g_1'(x)^\ast dx + \int_{[0,L)} f_2(x) g_2(x)^\ast d\dip(x), \quad f,\,g\in\cK.
  \end{align*}
   Clearly, the space $\cH$ can be regarded as a closed subspace of $\cK$ and its orthogonal complement $\cH^\bot$ is spanned by a single function $k\in\cK$ which is explicitly given by 
   \begin{align*}
     k(x) = \left(1 - \frac{x}{L}\right) \begin{pmatrix} 1\\0\end{pmatrix}, \quad x\in[0,L). 
   \end{align*} 
   In view of this decomposition, the linear relation
   \begin{align*}
    \T_0 = \T \oplus\left(\cH^\bot\times\lbrace0\rbrace\right)
   \end{align*}
   is self-adjoint in $\cK$. 
   Moreover, it is a one-dimensional restriction, 
 \begin{align*}
  \T_0 = \left\lbrace \f\in\T_{\mathrm{max}} \,|\, \tau f_1(0) = 0 \right\rbrace, 
 \end{align*}   
   of the linear relation $\T_{\mathrm{max}}$ in $\cK$, defined by saying that some pair $(f,g)\in\cK\times\cK$ belongs to $\T_{\mathrm{max}}$ if and only if~\eqref{eqnDEre1} holds.  
 As a consequence, the deficiency indices of the symmetric linear relation $\T_{\mathrm{max}}^\ast$ are equal to one.
 This guarantees that 
 \begin{align*}
  \dim\ker{\T_{\mathrm{max}}-z} = 1 
 \end{align*}
 for all $z$ in the resolvent set of $\T_0$, which readily proves the claim.
  \end{proof}
 
  The existence of a solution $\psi$ of the homogeneous differential equation~\eqref{eqnDEho}  such that $\psi$ lies in $H'[0,L)$ and $z\psi$ lies in $L^2([0,L);\dip)$ as in Lemma~\ref{lemWS} can also be established employing the usual Weyl circles method (see, for example, \cite[\S 2.5]{kakr74}). 
   
  \begin{proposition}\label{propRes}
  If $z$ belongs to the resolvent set of $\T$, then one has   
  \begin{align}\begin{split}\label{eq:Res}
   z\, (\T-z)^{-1} g(x) & = \spr{g}{\G(x,\cdot\,)^\ast}_{\cH} \begin{pmatrix} 1 \\ z \end{pmatrix} - g_1(x) \begin{pmatrix} 1 \\ 0 \end{pmatrix}, \quad x\in[0,L),  
 \end{split}\end{align} 
 for every $g\in\cH$, where the Green's function $\G$ is given by  
 \begin{align}
  \G(x,t) =  \begin{pmatrix} 1 \\ z \end{pmatrix} \frac{1}{W(\psi,\phi)} \begin{cases} \psi(x) \phi(t), & t\in[0,x), \\ \psi(t) \phi(x), & t\in[x,L),    \end{cases}
 \end{align}
 and $\psi$, $\phi$ are linearly independent solutions of the homogeneous differential equation~\eqref{eqnDEho} such that $\phi$ vanishes at zero, $\psi$ lies in $H'[0,L)$ and $z\psi$ lies in $L^2([0,L);\dip)$. 
 \end{proposition}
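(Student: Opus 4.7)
The approach is to exploit the self-adjointness of $\T$ established in Theorem~\ref{thmTsa}: since $z$ lies in the resolvent set, the equation $(\T-z)F=g$ has a unique solution $F\in\cH$ for every $g\in\cH$. It therefore suffices to \emph{define} $F$ directly from the right-hand side of~\eqref{eq:Res} divided by $z$ and verify that the pair $(F,g+zF)$ lies in $\T$, that is, that both identities in~\eqref{eqnDEre1} hold. Setting $w(x):=\spr{g}{\G(x,\cdot\,)^\ast}_{\cH}$, the candidate is $F_1:=(w-g_1)/z$ and $F_2:=w$ on the support of $\dip$.

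Unfolding the definition of $\G$ gives the explicit expression
\begin{equation*}
w(x) \,=\, \frac{1}{W(\psi,\phi)}\bigl[\psi(x)A(x)+\phi(x)B(x)\bigr],
\end{equation*}
where $A(x):=\int_0^x g_1'\phi'\,dt + z\int_{[0,x)}g_2\phi\,d\dip$ and $B(x):=\int_x^L g_1'\psi'\,dt + z\int_{[x,L)}g_2\psi\,d\dip$. The second equation in~\eqref{eqnDEre1}, namely $\dip F_2 = \dip(g_1+zF_1)$, is then immediate from $w=zF_1+g_1$ by construction. The first equation, $-F_1''=\omega(g_1+zF_1)+\dip(g_2+zF_2)$, reduces after substitution and multiplication by $z$ to the distributional identity
\begin{equation*}
-w''+g_1'' \,=\, z\,\omega\, w + z^2\dip\, w + z\dip g_2
\end{equation*}
on $[0,L)$. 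I would verify this by testing against $h\in H^1_{\cc}[0,L)$: expand $\int_0^L w'(x)h'(x)\,dx$ via the explicit formula for $w$, integrate by parts using~\eqref{eqnQD} for $\psi$ and $\phi$ (which both satisfy the homogeneous equation~\eqref{eqnDEho}), and collapse the cross terms via the Wronski identity~\eqref{eqnWronskiconst}. What remains assembles precisely the right-hand side of the claimed identity, including the contribution from $g_1''$ (through the pairing of $\omega$ and $\dip$ with $g_1$) and from $z\dip g_2$.

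It remains to check $F\in\cH$. Vanishing at the left endpoint, $F_1(0)=0$, is immediate from $\phi(0)=0$, $A(0)=0$, and $g_1(0)=0$ (the last since $g_1\in\Hasto$). Finiteness of $\|F_1\|_{\Hasto}$ and $\|F_2\|_{L^2([0,L);\dip)}$ is obtained by splitting the relevant integrals at any interior point and invoking Cauchy--Schwarz, using $\psi\in H'[0,L)$ together with $z\psi\in L^2([0,L);\dip)$ for the behavior near $L$, and that $\phi,\phi'$ are bounded on compact subsets of $[0,L)$ for the behavior near $0$.

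The hard part will be the distributional verification for $w$: one must simultaneously handle the low regularity of $\omega$ (accessed only through its normalized anti-derivative $\Wr$) and the possibly atomic measure $\dip$ (across whose atoms $A$, $B$, and hence $w$ may jump), while keeping consistent with the quasi-derivative interpretation of the homogeneous equation encoded in~\eqref{eqnQD}. The careful bookkeeping of boundary contributions when integrating by parts against a test function $h\in H^1_{\cc}[0,L)$, and the identification of these contributions with the distributions $\omega g_1$, $\dip g_1$, and $\dip g_2$ via Definition~\ref{defSolution}, is the technical heart of the argument.
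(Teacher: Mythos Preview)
Your strategy is the reverse of the paper's. You construct $F$ from the Green's function formula and then verify that $(F,g+zF)\in\T$; the paper instead starts from the pair $(f,g)\in\T-z$ (which exists in $\cH\times\cH$ because $z$ is in the resolvent set) and computes the inner product $\spr{g}{\G(x,\cdot)^\ast}_\cH$ directly via three successive integrations by parts using the quasi-derivative identities~\eqref{eqnDEThetaInt} and~\eqref{eqnTloc-zInt}, showing it equals $zf_1(x)+g_1(x)$ up to a boundary term at $L$ that vanishes by the same mechanism as in the proof of Theorem~\ref{thmTsa}. The distributional calculus you correctly anticipate as the ``hard part'' is present in both approaches and of comparable difficulty.

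Where your outline has a genuine gap is the step $F\in\cH$. The paper's route never needs this, since $f\in\cH$ is automatic. Your proposed argument (split at an interior point and apply Cauchy--Schwarz) is too optimistic: writing $zW(\psi,\phi)\,F_1'=\psi'A+\phi'B$ as you implicitly do, the difficulty near $L$ is that $A(x)$ need not remain bounded (because $\phi'$ need not lie in $L^2[0,L)$, so $\int_0^x g_1'\phi'$ may diverge) and $\phi'$ itself need not be square-integrable near $L$, so neither summand is individually in $L^2$; any control must come from cancellation between the two terms, which a crude Cauchy--Schwarz bound cannot see. A clean repair is to first take $g$ with compact support: then $A$ is eventually constant and $B$ eventually vanishes, so $F_1'\in L^2[0,L)$ and $F_2\in L^2([0,L);\dip)$ are immediate, and your distributional verification goes through. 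For general $g$ one then passes to the limit, observing that $\G(x,\cdot)\in\cH$ for each fixed $x$ (this is precisely where $\psi\in H'[0,L)$ and $z\psi\in L^2([0,L);\dip)$ enter), so both sides of~\eqref{eq:Res} are continuous in $g$. Without this density step or an equivalent device the argument is incomplete.

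A minor point: dividing by $z$ presupposes $z\neq 0$. If zero happens to lie in the resolvent set the formula still has content and requires a separate (easy) check.
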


\begin{proof}
 First of all, we note that non-trivial solutions $\psi$, $\phi$ of~\eqref{eqnDEho} with the required properties always exist due to Lemma~\ref{lemEE} and Lemma~\ref{lemWS}. 
 These solutions are necessarily linearly independent since otherwise $z$ would be an eigenvalue of $\T$. 
 Furthermore, let us also recall that for every solution $\theta$ of~\eqref{eqnDEho} we have 
 \begin{align}\label{eqnDEThetaInt}
  \theta'(x) + z\,\Wr(x)\theta(x) = \theta^\qd(x) := \theta'\NL + z\int_0^x \Wr(t)\theta'(t)dt - z^2 \int_{[0,x)} \theta\, d\dip
 \end{align}
 for almost all $x\in[0,L)$. 
 Now we fix some $(f,g)\in\T-z$, introduce the function
  \begin{align*}
  f_1^\qd(x) = f_1'\NL + \int_0^x \Wr(t)(zf_1'(t) + g_1'(t))dt - \int_{[0,x)} z^2f_1 + zg_1 + g_2\,d\dip, \quad x\in[0,L), 
 \end{align*}
  and note that the differential equation in~\eqref{eqnTloc-z} yields
 \begin{align}\label{eqnTloc-zInt}
  f_1'(x) + z\,\Wr(x)f_1(x) + \Wr(x)g_1(x) = f_1^\qd(x)
 \end{align}
 for almost all $x\in[0,L)$.
 Given some $x\in[0,L)$, we use~\eqref{eqnDEThetaInt} and an integration by parts to obtain (here, $\G_1$ is differentiated with respect to the second variable) 
  \begin{align*}
   & \int_0^r \G_1'(x,t) g_1'(t) dt + \int_{[0,r)} \G_2(x,\redot) g_2\,d\dip - g_1(x) - \frac{\phi(x)}{W(\psi,\phi)} \psi^\qd(r)g_1(r) \\
  & \qquad = - z\int_0^r \Wr(t) (\G_1(x,\redot)g_1)'(t)dt + z \int_{[0,r)} \G_1(x,\redot)(zg_1+g_2)\,d\dip,   
   \end{align*}  
 which holds as long as $r\in[x,L)$. 
 Moreover, using~\eqref{eqnTloc-zInt} and another integration by parts, the above equation is furthermore equal to   
 \begin{align*}
  & z\int_0^r \G_1'(x,t)f_1'(t)dt  - \frac{\phi(x)}{W(\psi,\phi)} z\psi(r)f_1^\qd(r) \\ 
  & \qquad\qquad  + z^2\int_0^r \Wr(t)(\G_1(x,\redot)f_1)'(t)dt - z^3\int_{[0,r)} \G_1(x,\redot)f_1\,d\dip,  
 \end{align*}
  and upon invoking~\eqref{eqnDEThetaInt} as well as a final integration by parts, we  end up with 
  \begin{align*}
   z f_1(x) + \frac{\phi(x)}{W(\psi,\phi)} \left(\psi^\qd(r)zf_1(r) - z\psi(r)f_1^\qd(r)\right).  
 \end{align*}
  Altogether, upon letting $r\rightarrow L$, these equations give 
 \begin{align*}
   z f_1(x) & = \spr{g}{\G(x,\cdot\,)^\ast}_{\cH} - g_1(x) \\ & \qquad\qquad + \frac{\phi(x)}{W(\psi,\phi)} \lim_{r\rightarrow L} \left( z\psi(r)f_1^\qd(r) - \psi^\qd(r)(zf_1(r)+g_1(r)) \right).
 \end{align*}  
 Now one infers as in the first part of the proof of Theorem~\ref{thmTsa} that the limit is actually zero, which gives the claimed representation of $zf_1$ as in~\eqref{eq:Res}.  
Finally, as an immediate consequence of the definition of $\T$ we have 
\begin{align*}
 zf_2(x)  = z\left(z f_1(x) + g_1(x)\right)= z\,\spr{g}{\G(x,\redot)^\ast}_{\cH},
\end{align*}
for almost all $x\in[0,L)$ with respect to $\dip$. 
\end{proof}

  \section{The principal Weyl--Titchmarsh function}\label{secWT}
 
 Associated with our spectral problem is  the Weyl--Titchmarsh function $m$ defined on $\C\backslash\R$ by 
 \begin{align}
  m(z) =  \frac{\psi'\NLz}{z\psi(z,0)},\quad z\in\C\backslash\R,
 \end{align} 
 where $\psi(z,\redot)$ is a non-trivial solution of the homogeneous differential equation~\eqref{eqnDEho} which lies in $H'[0,L)$ and $L^2([0,L);\dip)$ as in Lemma~\ref{lemWS}. 
 Note that the denominator in this definition does not vanish since otherwise $z$ would be an eigenvalue of $\T$. 

 For every $z\in\C$, we introduce the fundamental system of solutions $\theta(z,\redot)$, $\phi(z,\redot)$ of the homogeneous differential equation~\eqref{eqnDEho} satisfying the initial conditions
 \begin{align}
  \theta(z,0)& = \phi'\NLz =1, &  \theta'\NLz & = \phi(z,0) =0.
 \end{align}
 It follows readily from the very definition of $m$ that the solution  
 \begin{align}\label{eq:m_pm}
   \psi(z,x) = \theta(z,x) +  m(z)\,z\phi(z,x), \quad x\in[0,L), 
 \end{align}
 of~\eqref{eqnDEho} lies in $H'[0,L)$ and $L^2([0,L);\dip)$ for every $z\in\C\backslash\R$.

 Let us recall that a function $m$ on $\C\backslash\R$ is called {\it a Herglotz--Nevanlinna function} if it is analytic, maps the upper complex half-plane into the closure of the upper complex half-plane and satisfies the symmetry relation
 \begin{align}\label{eqnHNsym}
  m(z)^\ast = m(z^\ast), \quad z\in\C\backslash\R. 
 \end{align}

\begin{lemma}\label{lemHNm}
The Weyl--Titchmarsh function $m$ is a Herglotz--Nevanlinna function. 
\end{lemma}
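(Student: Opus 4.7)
The plan is to verify the three defining properties of a Herglotz--Nevanlinna function separately.

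\textbf{Analyticity.} By Corollary~\ref{corSolEnt}, the fundamental solutions $\theta(z,\cdot)$ and $\phi(z,\cdot)$ depend entire on~$z$ (together with their regularized quasi-derivatives at~$0$). The Weyl solution from Lemma~\ref{lemWS}, normalized by $\psi(z,0)=1$, enters the Green's function in Proposition~\ref{propRes} through the ratio $\psi(z,\cdot)/W(\psi,\phi)=\psi(z,\cdot)$ (since $W(\psi,\phi)=\psi(z,0)\phi'\NLz-\psi'\NLz\phi(z,0)=1$). Since $\T$ is self-adjoint by Theorem~\ref{thmTsa}, the map $z\mapsto(\T-z)^{-1}$ is analytic on $\C\setminus\R$, so the Green's function depends analytically on $z\in\C\setminus\R$; extracting $\psi(z,\cdot)$ and then $\psi'\NLz$ via the regularized quasi-derivative identity~\eqref{eqnDEThetaInt} at small $x$ shows that $z\mapsto\psi'\NLz$ is analytic on $\C\setminus\R$, hence so is $m$.

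\textbf{Symmetry.} Since $\omega$ is real-valued and $\dip$ is real non-negative, complex conjugating the equation $-\psi''=z\omega\psi+z^{2}\dip\psi$ shows that $\overline{\psi(z,\cdot)}$ is a solution of the homogeneous equation~\eqref{eqnDEho} for spectral parameter $z^{\ast}$. It lies in $H'[0,L)$ and $z^{\ast}\overline{\psi(z,\cdot)}$ lies in $L^{2}([0,L);\dip)$ because $|z^{\ast}|=|z|$. By the uniqueness statement in Lemma~\ref{lemWS}, $\overline{\psi(z,\cdot)}$ must be a scalar multiple of $\psi(z^{\ast},\cdot)$, and the common normalization at $0$ forces the scalar to be~$1$. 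Differentiating at $0-$ gives $\overline{\psi'\NLz}=\psi'(z^{\ast},0-)$, and the symmetry relation $m(z)^{\ast}=m(z^{\ast})$ follows from the defining formula of~$m$.

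\textbf{Non-negative imaginary part.} This is the main obstacle, and the difficulty is that the natural Green's identity produces a term of the formal shape $\omega(|\psi|^{2})$, which has no intrinsic meaning because $\omega$ is merely a distribution and $|\psi|^{2}$ need not be compactly supported. The plan is to circumvent this by taking a linear combination that eliminates the offending term. Fix $z\in\C\setminus\R$ and test the weak formulation of Definition~\ref{defSolution} for $\psi(z,\cdot)$ against $h=\chi_{R}\overline{\psi(z,\cdot)}$, where $\chi_{R}\in H^{1}_{\cc}[0,L)$ is a cutoff with $\chi_{R}\equiv 1$ near~$0$, and simultaneously test the corresponding weak equation for $\overline{\psi(z,\cdot)}$ (the $z^{\ast}$-equation) against $h=\chi_{R}\psi(z,\cdot)$. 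Subtracting and adding these identities produces, after dividing by $2i$ and~$2$ respectively,
\begin{align*}
\im\psi'\NLz+\int_{0}^{L}\chi_{R}'\im(\psi'\overline{\psi})\,dx&=\im z\,\omega(\chi_{R}|\psi|^{2})+\im(z^{2})\int_{[0,L)}\chi_{R}|\psi|^{2}d\dip,\\
\re\psi'\NLz+\int_{0}^{L}\chi_{R}'\re(\psi'\overline{\psi})\,dx+\int_{0}^{L}\chi_{R}|\psi'|^{2}dx&=\re z\,\omega(\chi_{R}|\psi|^{2})+\re(z^{2})\int_{[0,L)}\chi_{R}|\psi|^{2}d\dip.
\end{align*}
Multiplying the first by $\re z$, the second by $\im z$, and subtracting eliminates the problematic quantity $\omega(\chi_{R}|\psi|^{2})$; using $\re z\,\im(z^{2})-\im z\,\re(z^{2})=\im z\,|z|^{2}$, the resulting identity reads
\begin{align*}
\im(\bar z\,\psi'\NLz)+\int_{0}^{L}\chi_{R}'\im(\bar z\,\psi'\overline{\psi})\,dx=\im z\int_{0}^{L}\chi_{R}|\psi'|^{2}dx+\im z\,|z|^{2}\int_{[0,L)}\chi_{R}|\psi|^{2}d\dip.
\end{align*}
Letting $R\to L$, the cutoff term in the middle vanishes by the Hardy-type argument already used in the proof of Theorem~\ref{thmTsa} (the integrand is integrable near~$L$ because $\psi\in H'[0,L)$ and $z\psi\in L^{2}([0,L);\dip)$, so some subsequence yields a zero limit). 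Since $\psi(z,0)=1$ gives $\bar z\,\psi'\NLz=|z|^{2}m(z)$, we arrive at
\begin{align*}
|z|^{2}\im m(z)=\im z\left(\int_{0}^{L}|\psi'(z,x)|^{2}dx+|z|^{2}\int_{[0,L)}|\psi(z,x)|^{2}d\dip(x)\right),
\end{align*}
and the right-hand side has the same sign as $\im z$, completing the proof.
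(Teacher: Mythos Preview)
Your proof is correct and reaches exactly the same key identity as the paper,
\[
\frac{\im m(z)}{\im z}=\frac{1}{|z|^{2}}\int_0^L|\psi'(z,x)|^{2}\,dx+\int_{[0,L)}|\psi(z,x)|^{2}\,d\dip(x),
\]
together with the same Hardy-type argument for the boundary term. The only genuine difference is in how this identity is derived: the paper works with the regularized quasi-derivative $\psi^{[1]}$ from~\eqref{eqnDEThetaInt} and integrates by parts directly on $[0,x]$, obtaining a pointwise boundary term at $x$; you instead pair the weak formulation with cutoffs and take the linear combination $\re z\cdot(\text{difference})-\im z\cdot(\text{sum})$ to annihilate the ill-defined quantity $\omega(\chi_R|\psi|^{2})$. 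Your trick is more elementary in that it avoids invoking $\psi^{[1]}$, while the paper's route is more systematic since the quasi-derivative machinery has already been set up. One small point worth making explicit in your write-up: the integrand $\im(\bar z\,\psi'\overline{\psi})$ coincides almost everywhere with the \emph{continuous} function $\im(\bar z\,\psi^{[1]}\overline{\psi})$ (the $z\Wr|\psi|^{2}$ contribution is real and drops out), which is what allows the subsequence produced by the Hardy argument to be fed back into the cutoff integral; without this observation the phrase ``some subsequence yields a zero limit'' is not quite enough, since $\psi'$ is only defined almost everywhere.
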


\begin{proof}
 From Proposition~\ref{propRes} we infer that for every $x\in[0,L)$ one has  
\begin{align}\begin{split}\label{eqnRelResm}
 z\, & \spr{(\T-z)^{-1}\delta_{x}}{\delta_{x}}_{\cH} +  x\left(1-\frac{x}{L}\right)  \\ 
    & \qquad\qquad=  \left(\theta(z,x) + m(z)\, z \phi(z,x) \right) \phi(z,x), \quad z\in\C\backslash\R.
\end{split}\end{align}
Since for every $z\in\C$ we are able to find an $x\in[0,L)$ such that $\phi(z,x)$ does not vanish, this equation shows that the function $m$ is analytic on $\C\backslash\R$ in view of Corollary~\ref{corSolEnt}. 
Moreover, from Lemma~\ref{lemEE} we deduce that  
\begin{align*}
 \theta(z,x)^\ast & = \theta(z^\ast,x), & \phi(z,x)^\ast & = \phi(z^\ast,x),
\end{align*}
 for all $z\in\C$ and $x\in[0,L)$, which readily implies the symmetry relation~\eqref{eqnHNsym}. 
Finally, the fact that $m$ is a Herglotz--Nevanlinna function follows from the identity
\begin{align*}
 \frac{m(z) - m(z)^{\ast}}{z-z^{\ast}} & = \frac{1}{|z|^2} \int_0^L \left|\psi'(z,x)\right|^2 dx + \int_{[0,L)} \left|\psi(z,x)\right|^2 d\dip(x), \quad z\in\C\backslash\R, 
\end{align*}
 where $\psi$ is given by~\eqref{eq:m_pm} and we used~\eqref{eqnDEThetaInt}, integration by parts 
 as well as  
 \begin{align*}
  \lim_{x\rightarrow L} z\psi(z,x)\psi^\qd(z^\ast,x) - \psi^\qd(z,x) z^\ast\psi(z^\ast,x) = 0, \quad z\in\C\backslash\R, 
 \end{align*}
 which can be proved as in the first part of the proof of Theorem~\ref{thmTsa}. 
\end{proof}
 
 Let us mention that the Weyl--Titchmarsh function $m$ can also be introduced in a different way using just the fundamental system of solutions $\theta$ and $\phi$; cf.\  \cite[\S 10.4]{kakr74}. 
 The proof is fairly standard but we shall present it for the sake of completeness. 
 
  \begin{lemma}\label{lemWTasQ}
  The Weyl--Titchmarsh function $m$ is given by 
  \begin{align}\label{eq:mFalt}
    m(z) = \lim_{x\rightarrow L} -\frac{\theta(z,x)}{z\phi(z,x)}, \quad z\in\C\backslash\R,
  \end{align}
  where the convergence is locally uniformly. 
 \end{lemma}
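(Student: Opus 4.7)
The approach is the classical nested Weyl disk argument, adapted to the present setting. Fix $z \in \C\setminus\R$ throughout (say $\im z > 0$) and put $\tilde m_x(z) := -\theta(z,x)/(z\phi(z,x))$ at every $x \in [0,L)$ with $\phi(z,x) \neq 0$. Introducing the affine family of solutions $\chi_c(z,\cdot) := \theta(z,\cdot) + cz\phi(z,\cdot)$ parameterized by $c \in \C$, the value $c = \tilde m_x(z)$ is singled out by the Dirichlet condition $\chi_{\tilde m_x(z)}(z,x) = 0$, while $\chi_{m(z)} = \psi(z,\cdot)$ recovers the Weyl solution from Lemma~\ref{lemWS}.

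The main step is to carry out, on each finite subinterval $[0,x]$ rather than all of $[0,L)$, the Green's/Lagrange identity computation used in the proof of Lemma~\ref{lemHNm}, now retaining the boundary contribution at $x$. The resulting identity relates a boundary expression at $x$ (built from $\chi_c$ and its quasi-derivative) to the manifestly non-negative quantity
\begin{align*}
\frac{1}{|z|^2}\int_0^x |\chi_c'(z,t)|^2\,dt + \int_{[0,x)} |\chi_c(z,t)|^2\,d\dip(t).
\end{align*}
Regarded as a constraint on $c$, this singles out a closed disk $D_x(z) \subset \C$, since $\chi_c$ depends affinely on $c$ and the constraint is quadratic in $c$ with scalar leading coefficient. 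The value $c = \tilde m_x(z)$ lies on $\partial D_x(z)$, because $\chi_{\tilde m_x(z)}(z,x) = 0$ makes the boundary contribution degenerate; the value $c = m(z)$ lies in every $D_x(z)$, because the proof of Lemma~\ref{lemHNm} already places $m(z)$ in the limiting disk at $x = L$ via the square-integrability of $\psi$.

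Monotonicity in $x$ of the displayed integral gives $D_y(z) \subseteq D_x(z)$ whenever $x \leq y$, so the disks are nested. The self-adjointness of $\T$ from Theorem~\ref{thmTsa}, or equivalently the uniqueness statement in Lemma~\ref{lemWS}, forces the limit-point case, and the nested intersection $\bigcap_{x<L} D_x(z)$ reduces to the single point $m(z)$. Since $\tilde m_x(z) \in D_x(z)$ as well, this yields $\tilde m_x(z) \to m(z)$ as $x \to L$.

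Finally, locally uniform convergence in $z$ follows by a standard normal-families argument: each $\tilde m_x$ is meromorphic on $\C\setminus\R$ by Corollary~\ref{corSolEnt}, and the inclusion $\tilde m_x(z) \in D_{x_0}(z)$ for any fixed $x_0 > 0$ keeps the family locally bounded on compact subsets of $\C\setminus\R$, so Vitali's theorem promotes pointwise convergence to locally uniform convergence. The main technical obstacle is the quasi-derivative bookkeeping required to identify the disk and to verify that the boundary contribution degenerates exactly for $c = \tilde m_x(z)$ and tends to zero at $x = L$ for $c = m(z)$; both manipulations, however, precisely mirror the computations already carried out in the proofs of Theorem~\ref{thmTsa} and Lemma~\ref{lemHNm}.
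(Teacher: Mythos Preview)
Your approach is essentially the paper's, with only a cosmetic difference in framing. Both arguments rest on the same Lagrange identity on $[0,x]$ (your displayed integral is exactly the right-hand side of the paper's equation for $m_x$), and both identify the limit via the uniqueness in Lemma~\ref{lemWS}. The paper, however, does not explicitly set up nested Weyl disks and argue that they shrink to a point; instead it shows directly that each $m_x(z)=-\theta(z,x)/(z\phi(z,x))$ is a Herglotz--Nevanlinna function (this is your ``$\tilde m_x(z)\in\partial D_x(z)$'' observation, written out as an equality), applies the fundamental normality test to extract a subsequence converging either to a function $Q$ or to $\infty$, rules out $\infty$ by dividing the key inequality through by $|m_{x_n}(z)|^2$, and finally shows that $\theta+Qz\phi$ inherits the required integrability bounds, forcing $Q=m$ by Lemma~\ref{lemWS}. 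Your disk-shrinking step and Vitali argument, versus the paper's subsequence extraction via the normality test, are interchangeable packagings of the same estimate; neither buys anything the other does not.
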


 \begin{proof}
  For every $x\in(0,L)$, we first consider the function
  \begin{align*}
   m_x(z) = - \frac{\theta(z,x)}{z\phi(z,x)}, \quad z\in\C\backslash\R,
  \end{align*}
  which is well-defined and analytic.
  Indeed, if $\phi(z,x)$ was zero, then integration by parts using~\eqref{eqnDEThetaInt} would imply the contradiction 
  \begin{align}\label{eqnphizerocont}
   \int_0^x |\phi'(z,t)|^2 dt + \int_{[0,x)} |z\phi(z,t)|^2 d\dip(t) = 0
  \end{align}
  as long as $z$ is non-real. 
  Upon introducing the solutions
  \begin{align*}
   \psi_x(z,t) = \theta(z,t) + m_x(z)\, z\phi(z,t), \quad t\in[0,L),~z\in\C\backslash\R, 
  \end{align*}
  one shows (as in the proof of Lemma~\ref{lemHNm}) via integration by parts using~\eqref{eqnDEThetaInt} that 
  \begin{align}\label{eqnHNmx}
   \frac{m_x(z)-m_x(z)^\ast}{z-z^\ast} = \frac{1}{|z|^{2}} \int_0^x |\psi_x'(z,t)|^2 dt + \int_{[0,x)} |\psi_x(z,t)|^2 d\dip(t), \quad z\in\C\backslash\R.
  \end{align}
  In particular, this means that $m_x$ is a Herglotz--Nevanlinna function. 
  
  Now let $\lbrace x_n\rbrace_{n=1}^\infty$ be a sequence in $(0,L)$ with $x_n\rightarrow L$ as $n\rightarrow\infty$ and suppose that $m_{x_n}$ converges locally uniformly either to a function $Q$ or to $\infty$. 
 By the fundamental normality test (see \cite[Section~2.7]{sc93}), it suffices to show that the first case prevails and that $Q$ coincides with the Weyl--Titchmarsh function $m$.
 To this end, we first note that~\eqref{eqnHNmx} yields the inequality 
  \begin{align}\label{eqnHNmxest}
   \frac{1}{|z|^2} \int_0^x |\psi_{x_n}'(z,t)|^2 dt +  \int_{[0,x)} |\psi_{x_n}(z,t)|^2 d\dip(t) \leq \frac{m_{x_n}(z)-m_{x_n}(z)^\ast}{z-z^\ast} 
  \end{align}
  for every fixed $z\in\C\backslash\R$ and $x\in[0,L)$ as long as $n\in\N$ is large enough. 
  If the functions $m_{x_n}$ converged locally uniformly to $\infty$, then letting $n\rightarrow\infty$ in~\eqref{eqnHNmxest} after dividing by $|m_{x_n}(z)|^2$, would yield the contradiction~\eqref{eqnphizerocont}. 
  Thus, the sequence $m_{x_n}$ converges locally uniformly to a function $Q$ and in order to show that $Q$ coincides with the Weyl--Titchmarsh function $m$, we simply need to show that the function
  \begin{align*}
   \psi_{L}(z,t) = \theta(z,t) + Q(z)\, z \phi(z,t), \quad t\in[0,L),
  \end{align*}
  lies in $H'[0,L)$ and $L^2([0,L);\dip)$ for every $z\in\C\backslash\R$. 
  Letting $n\rightarrow\infty$ in~\eqref{eqnHNmxest}, we get 
  \begin{align*}
   \frac{1}{|z|^2} \int_0^x |\psi_{L}'(z,t)|^2 dt +  \int_{[0,x)} |\psi_{L}(z,t)|^2 d\dip(t) \leq \frac{Q(z)-Q(z)^\ast}{z-z^\ast}, \quad z\in\C\backslash\R,
  \end{align*}
  which concludes the proof since $x\in[0,L)$ was arbitrary. 
 \end{proof}
 
 \begin{remark}\label{rem:dualStr}
 Comparing our definition of the Weyl--Titchmarsh function with the definition used by M.\ G.\ Krein in the case when $\omega$ is a non-negative Borel measure on $[0,L)$ and $\dip$ vanishes identically (see \cite[Theorem 10.1]{kakr74}), let us mention that the Weyl--Titchmarsh function given by~\eqref{eq:mFalt} coincides with the coefficient of dynamic compliance of the corresponding dual string in this case (see \cite[Equation~(12.5)]{kakr74}).
 \end{remark}
 
As a Herglotz--Nevanlinna function, the Weyl--Titchmarsh function $m$ clearly has an integral representation \cite{kakr74a}, \cite[Section~5.3]{roro94} of the form  
\begin{align}\label{eqnWTmIntRep}
 m(z) = c_1 z + c_2 - \frac{1}{Lz} +  \int_\R \frac{1}{\lambda-z} - \frac{\lambda}{1+\lambda^2}\, d\mu(\lambda), \quad z\in\C\backslash\R, 
\end{align}
for some constants $c_1$, $c_2\in\R$ with $c_1\geq0$ 
and a non-negative Borel measure $\mu$ on $\R$ with $\mu(\lbrace0\rbrace)=0$ for which the integral  
\begin{align}
 \int_\R \frac{d\mu(\lambda)}{1+\lambda^2}
\end{align}
is finite. 
For reasons that will become clear soon we singled out a possible singularity of $m$ at zero in~\eqref{eqnWTmIntRep}, that is, we removed a possible point mass from $\mu$ at zero. 
The fact that $m$ can be written in the particular form in~\eqref{eqnWTmIntRep} follows from  
\begin{align}
 - \lim_{\varepsilon\downarrow0} \I\varepsilon\, m(\I\varepsilon) = \frac{1}{L},  
\end{align}
where we used the identity~\eqref{eqnRelResm} to compute the limit. 

The measure $\mu$, which can be recovered from $m$ as usual via the Stieltjes inversion formula (see, for example, \cite[\S 2]{kakr74a}), will turn out to be a {\em spectral measure} for the (operator part of the) linear relation $\T$.  
 To this end, let us first define the transform  
 \begin{align}\begin{split}\label{eqnFhat}
   \hat{f}(z) = \int_0^L \phi'(z,x) f_1'(x)dx + \int_{[0,L)} z \phi(z,x)f_2(x) d\dip(x), \quad z\in\C.  
 \end{split}\end{align}
 for every function $f\in\cH$ with compact support in $[0,L)$.
 One notes that this transform is an entire function (due to Corollary~\ref{corSolEnt}) which vanishes at zero.
 
  In order to state the next result, recall that for all $f$, $g\in\cH$ there is a unique complex Borel measure $E_{f,g}$ on $\R$ such that
\begin{align}\label{eqnLDstietrans}
  \spr{ (\T-z)^{-1} f}{g}_{\cH} = \int_\R \frac{dE_{f,g}(\lambda)}{\lambda-z}, \quad z\in\C\backslash\R.
\end{align}

\begin{lemma}\label{lemSMst}
 Given functions $f$, $g\in\cH$ with compact support in $[0,L)$, we have 
\begin{align}
 E_{f,g}(B) = \int_{B} \hat{f}(\lambda)  \hat{g}(\lambda)^\ast d\mu(\lambda)
\end{align}
for every Borel set $B\subseteq\R$.
\end{lemma}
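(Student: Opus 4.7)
The strategy is to compute $\spr{(\T-z)^{-1} f}{g}_{\cH}$ in closed form, recognize it modulo a function analytic across $\R$ as the Cauchy transform of $\hat{f}(\lambda)\hat{g}(\lambda)^{\ast}\,d\mu(\lambda)$, and then conclude via Stieltjes inversion applied to the defining identity~\eqref{eqnLDstietrans} of $E_{f,g}$.

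To carry out the first step, I apply Proposition~\ref{propRes}. Since $W(\psi,\phi)=W(\theta,\phi)=1$ and $\psi(z,\cdot)=\theta(z,\cdot)+m(z)\,z\,\phi(z,\cdot)$ by~\eqref{eq:m_pm}, the Green kernel $\G(x,t)$ splits as the sum of a piece built only out of $\theta$ and $\phi$ (entire in $z$ by Corollary~\ref{corSolEnt}) and a rank-one piece proportional to $m(z)\,z\,\phi(z,x)\phi(z,t)$. When substituted into the bilinear form via Proposition~\ref{propRes}, the first summand contributes an entire function $E_0(f,g;z)$, while the rank-one piece factorizes: the $t$-pairing with $f$ reduces exactly to $\hat{f}(z)$ as defined in~\eqref{eqnFhat}, whereas the $x$-pairing with $g$ gives $\hat{g}(z^{\ast})^{\ast}$ after invoking the symmetry $\phi(z,x)^{\ast}=\phi(z^{\ast},x)$ inherited from Lemma~\ref{lemEE}. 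Thus
\begin{equation*}
 \spr{(\T-z)^{-1} f}{g}_{\cH} = E_0(f,g;z) + m(z)\,\hat{f}(z)\,\hat{g}(z^{\ast})^{\ast}, \qquad z\in\C\setminus\R.
\end{equation*}

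Next I substitute the Herglotz--Nevanlinna representation~\eqref{eqnWTmIntRep} of $m$ into this display and rearrange into
\begin{equation*}
 m(z)\,\hat{f}(z)\,\hat{g}(z^{\ast})^{\ast} = \int_{\R} \frac{\hat{f}(\lambda)\hat{g}(\lambda)^{\ast}}{\lambda-z}\,d\mu(\lambda) + R(z),
\end{equation*}
with remainder
\begin{equation*}
 R(z) = (c_1 z + c_2 - c_0)\,\hat{f}(z)\hat{g}(z^{\ast})^{\ast} - \frac{\hat{f}(z)\hat{g}(z^{\ast})^{\ast}}{L\,z} + \int_{\R} \frac{\hat{f}(z)\hat{g}(z^{\ast})^{\ast} - \hat{f}(\lambda)\hat{g}(\lambda)^{\ast}}{\lambda-z}\,d\mu(\lambda),
\end{equation*}
where $c_0 = \int_{\R} \lambda(1+\lambda^2)^{-1}\,d\mu(\lambda)$. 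The $-1/(Lz)$ pole at zero is removable because $\hat{f}(0)=0$; the divided-difference integrand extends holomorphically across $z=\lambda\in\R$ since its numerator then vanishes (using $\hat{g}(\lambda^{\ast})^{\ast}=\hat{g}(\lambda)^{\ast}$); and convergence of the $\mu$-integral follows from $\int_{\R}(1+\lambda^2)^{-1}d\mu<\infty$ together with the growth control on $\hat{f},\hat{g}$ guaranteed by the compact support of $f,g$. Hence $R$ is entire, and therefore
\begin{equation*}
 \spr{(\T-z)^{-1} f}{g}_{\cH} - \int_{\R} \frac{\hat{f}(\lambda)\hat{g}(\lambda)^{\ast}}{\lambda-z}\,d\mu(\lambda) = E_0(f,g;z) + R(z)
\end{equation*}
is an entire function of $z$. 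By Stieltjes inversion applied to~\eqref{eqnLDstietrans}, this forces $dE_{f,g}(\lambda) = \hat{f}(\lambda)\hat{g}(\lambda)^{\ast}\,d\mu(\lambda)$.

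The main obstacle is the calculation in the second paragraph: correctly tracking the derivative in the $\Hasto$ component and the $\dip$-pairing in the $L^2([0,L);\dip)$ component of $\cH$, and recognizing the $g$-side of the rank-one contribution as $\hat{g}(z^{\ast})^{\ast}$ via the reality/symmetry relation $\phi(z,\cdot)^{\ast}=\phi(z^{\ast},\cdot)$, is where the bookkeeping is most delicate. The secondary technicality of absolute convergence of the divided-difference integral in the third paragraph is handled by the compact support of $f,g$.
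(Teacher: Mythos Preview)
Your first step---deriving the identity
\[
 \spr{(\T-z)^{-1} f}{g}_{\cH} = m(z)\,\hat{f}(z)\,\hat{g}(z^{\ast})^{\ast} + H_{f,g}(z)
\]
with $H_{f,g}$ entire---is exactly what the paper does, and your reasoning for it is correct.

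The gap is in your second step. The decomposition you write down is not known to make sense: the constant $c_0=\int_\R \lambda(1+\lambda^2)^{-1}\,d\mu(\lambda)$ need not exist (only $\int(1+\lambda^2)^{-1}d\mu<\infty$ is assumed), and more importantly neither the integral $\int_\R \frac{\hat f(\lambda)\hat g(\lambda)^\ast}{\lambda-z}\,d\mu(\lambda)$ nor your divided-difference integral is known to converge at infinity. Compact support of $f,g$ guarantees that $\hat f,\hat g$ are entire, but gives no polynomial bound on $|\hat f(\lambda)|$ for real $\lambda$: for indefinite $\omega$ the solution $\phi(\lambda,\cdot)$ can grow exponentially even for real $\lambda$, so the numerator of your divided difference may dominate any power of $\lambda$ in the denominator. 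In fact the finiteness of $\int|\hat f|^2\,d\mu$ is precisely what this lemma is meant to establish (see the paragraph following it in the paper), so you cannot invoke it here.

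The paper circumvents this by working locally: from the same identity it computes $E_{f,g}([\lambda_1,\lambda_2))$ via Stone's formula, obtaining
\[
 E_{f,g}([\lambda_1,\lambda_2)) = \lim_{\delta\downarrow0}\lim_{\varepsilon\downarrow0}\frac{1}{\pi}\int_{\lambda_1-\delta}^{\lambda_2-\delta}\hat f(\lambda)\hat g(\lambda)^\ast\,\im\, m(\lambda+\I\varepsilon)\,d\lambda,
\]
since the entire part $H_{f,g}$ contributes nothing. Then Stieltjes inversion for $m$ alone turns $\frac{1}{\pi}\im\,m(\lambda+\I\varepsilon)\,d\lambda$ into $d\mu$ on the compact interval, where the continuous weight $\hat f(\lambda)\hat g(\lambda)^\ast$ causes no trouble. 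This needs no global integrability of $\hat f\hat g^\ast$ against $\mu$ and yields the claim for all half-open intervals, hence for all Borel sets.
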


\begin{proof}
Using Proposition~\ref{propRes}, a lengthy but straightforward calculation gives  
\begin{align*}
 \spr{(\T-z)^{-1} f}{g}_{\cH} = m(z) \hat{f}(z) \hat{g}(z^\ast)^\ast + H_{f,g}(z), \quad z\in\C\backslash\R,
\end{align*}
for some entire function $H_{f,g}$ and thus for all $\lambda_1$, $\lambda_2\in\R$ with $\lambda_1<\lambda_2$
 \begin{align*}
  E_{f,g}([\lambda_1,\lambda_2)) & = \lim_{\delta\downarrow0} \lim_{\varepsilon\downarrow0} \frac{1}{\pi} \int_{\lambda_1-\delta}^{\lambda_2-\delta}  \spr{\im\,(\T-\lambda-\I\varepsilon)^{-1} f}{g}_{\cH} \,d\lambda \\ 
  & = \lim_{\delta\downarrow0} \lim_{\varepsilon\downarrow0} \frac{1}{\pi} \int_{\lambda_1-\delta}^{\lambda_2-\delta} \hat{f}(\lambda) \hat{g}(\lambda)^\ast \,\im\, m(\lambda+\I\varepsilon) \,d\lambda \\
  & = \int_{[\lambda_1,\lambda_2)} \hat{f}(\lambda) \hat{g}(\lambda)^\ast d\mu(\lambda),
 \end{align*}
 where we used the weak version of Stone's formula (also note $\hat{f}(0)=\hat{g}(0)=0$).
\end{proof}

In particular, the preceding lemma shows that the mapping $f\mapsto \hat{f}$ uniquely extends to a bounded linear operator $\F$ from $\cH$ into $\Lmu$.
More precisely, for every function $f\in\cH$ with compact support we have 
\begin{align}
 \| \hat{f}\|^2_{\Lmu} = \int_\R \hat{f}(\lambda) \hat{f}(\lambda)^\ast d\mu(\lambda) = E_{f,f}(\R) =  \|\mathrm{P} f\|_{\cH}^2,
\end{align}
where $\mathrm{P}$ is the orthogonal projection onto the closure $\D$ of $\dom{\T}$. 
This shows that the transformation $\F$ is actually a partial isometry from $\cH$ into $\Lmu$ with initial subspace $\D$. 
Of course, the result of Lemma~\ref{lemSMst} now immediately extends to all functions $f$, $g\in\cH$, that is, 
\begin{align}\label{eqnSMF}
 E_{f,g}(B) =  \int_B \F f(\lambda) \F g(\lambda)^\ast d\mu(\lambda)
\end{align}
for every Borel set $B\subseteq\R$. 
It will turn out that the transformation $\F$ maps the (operator part of the) self-adjoint linear relation $\T$ to multiplication with the independent variable in $\Lmu$. 
Before we get to prove this, we first need to derive a few more properties of this transformation. 

\begin{lemma}\label{lemTransPE}
 For each $x\in[0,L)$ we have 
 \begin{align}
  \F\delta_x(\lambda) = \phi(\lambda,x)
 \end{align} 
 for almost all $\lambda\in\R$ with respect to $\mu$.
\end{lemma}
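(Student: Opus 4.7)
My plan is to compute $\spr{(\T-z)^{-1}f}{\delta_x}_\cH$ via Proposition~\ref{propRes}, extract the $m(z)$-singular part, feed this into the Stone formula argument of Lemma~\ref{lemSMst}, and close with an orthogonality-plus-norm argument.

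First, for any $g \in \cH$, Proposition~\ref{propRes} combined with \eqref{eqnPEdeltacpm} gives
\[
 \spr{(\T-z)^{-1}g}{\delta_x}_\cH = [(\T-z)^{-1}g]_1(x) = \tfrac{1}{z}\bigl[\spr{g}{\G(x,\redot)^\ast}_\cH - g_1(x)\bigr].
\]
Substituting $\psi(z,\redot) = \theta(z,\redot) + m(z)\,z\,\phi(z,\redot)$ into the Green's kernel (with $W(\psi,\phi) = W(\theta,\phi) = 1$) and splitting $\spr{g}{\G(x,\redot)^\ast}_\cH$ at $t = x$, a direct expansion shows that the coefficient of $m(z)$ equals $z\,\phi(z,x)\,\hat{g}(z)$ when $g$ is compactly supported, and equals $z\,\phi(z,x)^2$ when $g = \delta_x$ (using $\spr{\delta_x}{\G(x,\redot)^\ast}_\cH = \G(x,x)_1$). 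Hence
\[
 \spr{(\T-z)^{-1}f}{\delta_x}_\cH = m(z)\,\phi(z,x)\,\hat{f}(z) + H_{f,x}(z), \quad \spr{(\T-z)^{-1}\delta_x}{\delta_x}_\cH = m(z)\,\phi(z,x)^2 + \tilde{H}_x(z),
\]
where $H_{f,x}$ and $\tilde{H}_x$ are entire: analyticity away from $z = 0$ is immediate from Corollary~\ref{corSolEnt}, and at $z = 0$ the residues cancel because $\theta(0,\redot) = 1$, $\phi(0,\redot)$ is the identity function, $\hat{f}(0) = 0$, and the $-1/(Lz)$ pole of $m$ in \eqref{eqnWTmIntRep} absorbs the leftover $x^2/(Lz)$ from the remainder.

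Second, I would run the Stone formula computation from the proof of Lemma~\ref{lemSMst} verbatim. The entire summands $H_{f,x}$, $\tilde{H}_x$ together with the polynomial, constant, and $-1/(Lz)$ parts of $m$ (once multiplied by the vanishing-at-zero entire factor $\phi(z,x)\hat{f}(z)$ or $\phi(z,x)^2$) contribute nothing in the limit $\varepsilon\downarrow 0$; only the Herglotz integral $\int(\lambda-z)^{-1}d\mu(\lambda)$ survives. Using that $\phi(\lambda,x)$ is real for real $\lambda$ by Lemma~\ref{lemEE}, this yields for every Borel $B \subseteq \R$
\[
 E_{f,\delta_x}(B) = \int_B \hat{f}(\lambda)\,\phi(\lambda,x)\, d\mu(\lambda), \quad E_{\delta_x,\delta_x}(B) = \int_B \phi(\lambda,x)^2\, d\mu(\lambda).
\]
In particular $\phi(\redot,x) \in \Lmu$ with $\|\phi(\redot,x)\|_{\Lmu}^2 = E_{\delta_x,\delta_x}(\R) = \|\mathrm{P}\delta_x\|_\cH^2 = \|\F\delta_x\|_{\Lmu}^2$.

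Third, comparing the first identity above with \eqref{eqnSMF} for $g = \delta_x$ gives $\int_B \hat{f}(\lambda)[\phi(\lambda,x) - \F\delta_x(\lambda)^\ast]\, d\mu(\lambda) = 0$ for every Borel $B$ and every compactly supported $f$, so pointwise $\hat{f}(\lambda)[\phi(\lambda,x) - \F\delta_x(\lambda)^\ast] = 0$ for $\mu$-a.e.\ $\lambda$. This implies $\hat{f} \perp \phi(\redot,x) - \F\delta_x^\ast$ in $\Lmu$, and density of compactly supported functions in $\cH$ then promotes this to $\phi(\redot,x) - \F\delta_x^\ast \in \F(\cH)^\perp$. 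Since $\delta_x$ is real-valued, $\F\delta_x$ is real-valued as well, so $\F\delta_x^\ast = \F\delta_x \in \F(\cH)$; writing $\phi(\redot,x) = \F\delta_x + w$ with $w \in \F(\cH)^\perp$ and applying the Pythagorean identity together with the norm equality from step two forces $\|w\|_{\Lmu} = 0$, whence $\F\delta_x(\lambda) = \phi(\lambda,x)$ for $\mu$-a.e.\ $\lambda$. The main obstacle is the algebra in step one, specifically the telescoping of the cross terms in $\spr{g}{\G(x,\redot)^\ast}_\cH$ arising from the $t < x$ and $t \geq x$ pieces of the Green's function under the substitution $\psi = \theta + m\,z\,\phi$, together with the delicate cancellation of the apparent $1/z$ pole at the origin using $\hat{f}(0) = 0$ and the explicit $-1/(Lz)$ residue of $m$.
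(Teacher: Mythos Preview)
Your approach is essentially the same as the paper's: both compute $\spr{(\T-z)^{-1}f}{\delta_x}_\cH$ and $\spr{(\T-z)^{-1}\delta_x}{\delta_x}_\cH$ via Proposition~\ref{propRes} (respectively~\eqref{eqnRelResm}), isolate the $m(z)$-singular part, apply the Stone formula argument from Lemma~\ref{lemSMst}, and conclude from the resulting norm and inner-product identities; your Pythagorean closing is logically equivalent to the paper's Cauchy--Schwarz equality step $\spr{\F\delta_x}{\phi(\ledot,x)}_{\Lmu}=\|\F\delta_x\|_{\Lmu}^2=\|\phi(\ledot,x)\|_{\Lmu}^2$.

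One point of care: your claim that $\tilde H_x$ is \emph{entire} is not literally correct when $L<\infty$. From~\eqref{eqnRelResm} one has $\tilde H_x(z)=\bigl(\theta(z,x)\phi(z,x)-x(1-x/L)\bigr)/z$, which has a simple pole at $z=0$ with residue $x^2/L$; what is entire is the combination $\tilde H_x(z)-\phi(z,x)^2/(Lz)$, which is precisely the paper's $H_{x,x}$. You clearly understand this, since you correctly say in step two that the $-1/(Lz)$ piece of $m$ together with $\tilde H_x$ contributes nothing to the Stieltjes inversion; just be aware that the decomposition $m(z)\phi(z,x)^2+\tilde H_x(z)$ with $\tilde H_x$ entire is not the right way to phrase it---the paper's $(m(z)+\tfrac{1}{Lz})\phi(z,x)^2+H_{x,x}(z)$ is the clean splitting.
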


\begin{proof}
Firstly, we infer from equation~\eqref{eqnRelResm} that   
\begin{align*}
 \spr{(\T-z)^{-1} \delta_x}{\delta_x}_{\cH} = \left(m(z)+\frac{1}{Lz}\right) \phi(z,x)\phi(z,x) + H_{x,x}(z), \quad z\in\C\backslash\R,
\end{align*}
 for some entire function $H_{x,x}$ and as in the proof of Lemma~\ref{lemSMst} we conclude that 
\begin{align*}
 \|\mathrm{P}\delta_x\|_{\cH}^2 = \int_\R |\phi(\lambda,x)|^2 d\mu(\lambda).
\end{align*}
Secondly, given a function $f\in\cH$ with compact support, Proposition~\ref{propRes} gives     
\begin{align*}
 \spr{(\T-z)^{-1} f}{\delta_x}_{\cH} = m(z) \hat{f}(z) \phi(z,x) + H_{f,x}(z), \quad z\in\C\backslash\R,
\end{align*}
for some entire function $H_{f,x}$.  
Again we infer as in the proof of Lemma~\ref{lemSMst} that 
\begin{align*}
 \spr{f}{\mathrm{P}\delta_x}_{\cH} = \int_\R \F f(\lambda) \phi(\lambda,x) d\mu(\lambda),  
\end{align*} 
which extends to all functions $f\in\cH$ by continuity. 
This finally yields  
\begin{align*}
    \spr{\F\delta_x}{\phi(\ledot,x)}_{\Lmu} & = \spr{\delta_x}{\mathrm{P}\delta_x}_{\cH} = \|\F\delta_x\|_{\Lmu}^2 = \|\phi(\ledot,x)\|_{\Lmu}^2,
\end{align*} 
which proves the claim. 
\end{proof}

\begin{lemma}\label{lemFadjoint}
 The adjoint of the operator $\F$ is given by 
 \begin{align}
  \F^\ast g(x) = \lim_{r\rightarrow\infty} \int_{-r}^r \begin{pmatrix} 1 \\ \lambda \end{pmatrix} \phi(\lambda,x) g(\lambda) d\mu(\lambda), \quad x\in[0,L), 
 \end{align} 
 for every $g\in\Lmu$, where the limit has to be understood as a limit in $\cH$. 
\end{lemma}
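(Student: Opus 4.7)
The plan is to identify $\F^* g$ componentwise from the adjoint relation $\spr{\F f}{g}_{\Lmu} = \spr{f}{\F^* g}_{\cH}$ by testing against two convenient families of vectors in $\cH$---first for compactly supported $g \in \Lmu$ and then passing to a limit in $\cH$ for general $g$.

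For the first component, I would test against the reproducing vector $\delta_x$. The identity $\spr{h}{\delta_x}_{\cH} = h_1(x)$, together with Lemma~\ref{lemTransPE} and the reality of $\phi(\lambda, x)$ for real $\lambda$ (Lemma~\ref{lemEE}), yields
\begin{align*}
 (\F^* g)_1(x) = \spr{\F^* g}{\delta_x}_{\cH} = \spr{g}{\F \delta_x}_{\Lmu} = \int_\R \phi(\lambda, x)\, g(\lambda)\, d\mu(\lambda)
\end{align*}
for every $g \in \Lmu$; absolute convergence here follows from $\phi(\ledot, x) \in \Lmu$, which in turn is a consequence of $\|\F \delta_x\|_{\Lmu} \leq \|\delta_x\|_{\cH}$ and Cauchy--Schwarz.

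For the second component, I would test against vectors $f = (0, f_2) \in \cH$ with $f_2 \in L^2([0,L); \dip)$ compactly supported in $[0,L)$. For such $f$ one has $\F f(\lambda) = \int_{[0,L)} \lambda\, \phi(\lambda, x)\, f_2(x)\, d\dip(x)$, so substituting into the adjoint relation produces an iterated integral of $\lambda\, \phi(\lambda, x)\, f_2(x)\, g(\lambda)^\ast$ over $\R \times [0,L)$. When $g$ is compactly supported in $[-r,r]$, this iterated integral is absolutely convergent by the local uniform boundedness of $\phi$ on compact sets (Corollary~\ref{corSolEnt}) combined with compactness of the supports of $f_2$ and $g$. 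Fubini's theorem then exchanges the order of integration, and by denseness of compactly supported $f_2$ in $L^2([0,L); \dip)$, one obtains $(\F^* g)_2(x) = \int_{-r}^r \lambda\, \phi(\lambda, x)\, g(\lambda)\, d\mu(\lambda)$ for $\dip$-almost every $x$.

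To pass to general $g \in \Lmu$, set $g_r = g\, \indik_{[-r,r]}$, so that $g_r \to g$ in $\Lmu$ and hence $\F^* g_r \to \F^* g$ in $\cH$ by boundedness of $\F^*$. The two previous steps identify $\F^* g_r$ with the stated vector-valued truncated integral, producing the claimed $\cH$-limit representation. The main obstacle is the Fubini justification for the second component; membership of the truncated integrals in $\cH$ need not be verified separately, as it is automatic once they have been recognized as the adjoint images $\F^* g_r$.
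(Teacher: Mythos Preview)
Your proof is correct and follows essentially the same approach as the paper: test against $\delta_x$ for the first component (via Lemma~\ref{lemTransPE}), against $(0,f_2)$ with compactly supported $f_2$ for the second component using Fubini, and then pass to general $g$ by continuity of $\F^\ast$. Your organization is slightly more economical in that you sidestep the paper's explicit $L^2([0,L);\dip)$ bound on the second component by observing from the outset that $\F^\ast g_r \in \cH$ automatically, whereas the paper first defines the candidate $\check{g}$, proves $\check{g}_2\in L^2([0,L);\dip)$ via a direct $I_c$ estimate, and only then identifies $\check{g}$ with $\F^\ast g$.
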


\begin{proof}
 Suppose that the function $g\in\Lmu$ has compact support, set
 \begin{align*}
  \check{g}(x) = \int_\R \begin{pmatrix} 1 \\ \lambda \end{pmatrix} \phi(\lambda,x) g(\lambda) d\mu(\lambda), \quad x\in[0,L), 
 \end{align*}
 and note that $\check{g}_1$ belongs to $\Hasto$ since 
 \begin{align*}
   \check{g}_1(x) & 
                            = \spr{g}{\F\delta_x}_{\Lmu}  = \spr{\F^\ast g}{\delta_x}_{\cH}   
                            =  (\F^\ast g)_1(x), \quad x\in[0,L).
 \end{align*} 
 Now for arbitrary $c\in[0,L)$ one obtains upon interchanging integrals 
\begin{align*}
 I_{c}^{2} & = \int_{[0,c)} \left|\check{g}_2(x)\right|^2 d\dip(x)  
            =  \int_{[0,c)} \check{g}_2(x) \int_\R \lambda \phi(\lambda,x) g(\lambda)^\ast d\mu(\lambda)\, d\dip(x)  \\
          & = \int_\R g(\lambda)^\ast \int_{[0,c)} \lambda\phi(\lambda,x) \check{g}_2(x) d\dip(x)\, d\mu(\lambda) 
           = \int_\R  g(\lambda)^\ast \F\begin{pmatrix} 0 \\ \indik_{[0,c)} \check{g}_2 \end{pmatrix}(\lambda) \,  d\mu(\lambda) \\         
          & \leq \left\| g\right\|_{\Lmu} \left\| \F\begin{pmatrix} 0 \\ \indik_{[0,c)} \check{g}_2 \end{pmatrix}\right\|_{\Lmu}  
          \leq \left\| g\right\|_{\Lmu} I_{c},         
\end{align*}
 which shows that $\check{g}$ belongs to $\cH$. 
 Now if $f\in\cH$ is such that $f_1=0$ and $f_2$ has compact support, then upon interchanging integrals one sees 
 \begin{align*}
  \spr{\check{g}_2}{f_2}_{L^2([0,L);\dip)} = \spr{g}{\hat{f}}_{\Lmu} & = \spr{\F^\ast g}{f}_{\cH} = \spr{(\F^\ast g)_2}{f_2}_{L^2([0,L);\dip)},  
 \end{align*}
 implying $\check{g} = \F^\ast g$ and hence the claim. 
 \end{proof}

\begin{lemma}\label{lemSTonto}
 The mapping $\F$ is onto with (in general multi-valued) inverse 
 \begin{align}\label{eqnFinverse}
  \F^{-1} = \F^\ast \oplus \left(\lbrace0\rbrace \times \mul{\T}\right). 
 \end{align}
\end{lemma}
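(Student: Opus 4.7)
The plan is to deduce both assertions from a single input: the surjectivity of $\F$ onto $\Lmu$. Once that is established, the formula~\eqref{eqnFinverse} follows from standard facts about partial isometries combined with the orthogonal decomposition $\cH = \D \oplus \mul{\T}$ associated with the self-adjoint linear relation $\T$ (Theorem~\ref{thmTsa}). Indeed, since $\F$ is a partial isometry with initial subspace $\D$, one has $\ker{\F} = \D^\perp = \mul{\T}$, and if $\F$ is onto then $\F\F^\ast = I_{\Lmu}$. For each $g\in\Lmu$, the element $\F^\ast g$ then lies in $\D$ and satisfies $\F(\F^\ast g) = g$, while every preimage of $g$ under $\F$ differs from $\F^\ast g$ by some vector in $\ker{\F} = \mul{\T}$. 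Writing this as a relational identity yields precisely $\F^{-1} = \F^\ast \oplus (\lbrace 0\rbrace\times\mul{\T})$, with directness of the sum coming from the fact that an element of the form $(0,\F^\ast g + h)$ with $h\in\mul{\T}$ can occur only with $\F^\ast g = 0$ and $h = 0$.

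For the surjectivity of $\F$ itself I note that $\F(\D)$ is automatically closed in $\Lmu$, being the image of a closed subspace under an isometry. Hence it suffices to show that $\F(\D)$ is dense in $\Lmu$, equivalently that $\F^\ast$ has trivial kernel, equivalently (using $\F\F^\ast = P_{\F(\D)}$) that $\F\F^\ast g = g$ for every $g\in\Lmu$. By density of compactly supported functions in $\Lmu$, it is enough to verify this identity for $g$ with compact support in $\R$, in which case Lemma~\ref{lemFadjoint} gives the explicit representative $f = \F^\ast g \in \cH$ with first component $f_1(x) = \int_\R \phi(\lambda,x) g(\lambda)\,d\mu(\lambda)$ and second component $f_2(x) = \int_\R \lambda\,\phi(\lambda,x) g(\lambda)\,d\mu(\lambda)$.

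Inserting these expressions into~\eqref{eqnFhat}, differentiating under the integral (justified for compactly supported $g$ by the local bounds from Corollary~\ref{corSolEnt}) and applying Fubini gives
\begin{align*}
 \F f(z) = \int_\R K(z,\lambda) g(\lambda)\,d\mu(\lambda), \quad z\in\C,
\end{align*}
with the symmetric kernel
\begin{align*}
 K(z,\lambda) = \int_0^L \phi'(z,x)\phi'(\lambda,x)\,dx + z\lambda \int_{[0,L)} \phi(z,x)\phi(\lambda,x)\,d\dip(x).
\end{align*}
The core analytic step is a Lagrange-type identity: using that both $\phi(z,\cdot)$ and $\phi(\lambda,\cdot)$ solve~\eqref{eqnDEho}, an integration by parts in the spirit of the computation used in the proof of Lemma~\ref{lemHNm}, together with the regularized quasi-derivative~\eqref{eqnDEThetaInt} and the vanishing boundary terms at $0$ (guaranteed by $\phi(z,0) = \phi(\lambda,0) = 0$), gives
\begin{align*}
 (\lambda - z) K(z,\lambda) = \lim_{x\rightarrow L}\bigl(z\,\phi(z,x)\phi^\qd(\lambda,x) - \phi^\qd(z,x)\,\lambda\,\phi(\lambda,x)\bigr) \cdot \text{(constant factor)}.
\end{align*}
Combining this Christoffel--Darboux-type identity with Stieltjes inversion applied to the Herglotz--Nevanlinna function $m$ (whose spectral measure is $\mu$) identifies $\int_\R K(z,\lambda) g(\lambda)\,d\mu(\lambda)$ with $g(z)$ as an element of $\Lmu$, i.e.\ $\F\F^\ast g = g$.

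The main obstacle is this last computational step: establishing the reproducing-kernel character of $K(z,\lambda)$ with respect to $\mu$. Everything else is routine partial-isometry bookkeeping and Fubini.
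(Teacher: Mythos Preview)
Your reduction of~\eqref{eqnFinverse} to surjectivity of $\F$ via general partial-isometry facts is correct and matches the paper's final paragraph. The gap is in the surjectivity argument.

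First, a technical slip: formula~\eqref{eqnFhat} defines $\hat f$ only for $f\in\cH$ with compact support in $[0,L)$. The function $f=\F^\ast g$, for $g\in\Lmu$ compactly supported in $\R$, has no reason to have compact support in $[0,L)$, so you cannot simply ``insert these expressions into~\eqref{eqnFhat}.'' Relatedly, the kernel
\[
K(z,\lambda)=\int_0^L\phi'(z,x)\phi'(\lambda,x)\,dx+z\lambda\int_{[0,L)}\phi(z,x)\phi(\lambda,x)\,d\dip(x)
\]
is in general not well-defined: for real $z$, $\lambda$ the solutions $\phi(z,\cdot)$, $\phi(\lambda,\cdot)$ need not lie in $H'[0,L)$ or $L^2([0,L);\dip)$, so both integrals may diverge. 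Any Lagrange/Christoffel--Darboux identity you can actually prove lives on a finite interval $[0,c)$ and carries a boundary term at $c$; controlling that term as $c\to L$ drags in the Weyl solution $\psi$ and the $m$-function and is not a formality.

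More seriously, the step you flag as the ``main obstacle''---showing that $K$ acts as a reproducing kernel for $\Lmu$ via Stieltjes inversion---is precisely the completeness statement you are trying to prove. It does not fall out of a boundary term plus Stieltjes inversion without a genuinely new argument; as written, the plan is circular. The paper avoids this entirely: it uses the Borel functional calculus for $\T$ (the measures $E_{f,g}$ in~\eqref{eqnLDstietrans}) together with Lemma~\ref{lemTransPE} to produce, for every small enough interval $U$ around any $\lambda_0\in\R$, an explicit $g\in\cH$ with $\F g=\indik_U$. That puts all characteristic functions of small intervals into $\ran{\F}$, hence all of $\Lmu$ since the range is closed. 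The spectral-theorem route is short because it recycles the information already encoded in~\eqref{eqnSMF} rather than re-deriving completeness from scratch.
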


 \begin{proof}
  Let $\lambda_0\in\R$ and choose some $x\in[0,L)$ such that $\phi(\lambda_0,x)$ is not zero. 
  Then for every small enough neighborhood $U\subseteq\R$ around $\lambda_0$, the function
 \begin{align*}
  G(\lambda) = \begin{cases}
                \phi(\lambda,x)^{-1}, & \lambda\in U, \\
                0,                    & \lambda\in\R \backslash U,
               \end{cases}
 \end{align*}
 is bounded. 
 By a variant of the spectral theorem, there exists a $g\in\cH$ such that 
 \begin{align*}
   E_{g,\delta_c}(B) & = \int_B G(\lambda) dE_{\delta_x,\delta_c}(\lambda) 
 \end{align*}
 for every Borel set $B\subseteq\R$ and $c\in[0,L)$. 
 In view of~\eqref{eqnSMF} we conclude that $\F g(\lambda) = G(\lambda)\F \delta_x(\lambda) = \indik_U(\lambda)$ for almost all $\lambda\in\R$ with respect to $\mu$. 
 Thus the range of $\F$ contains all characteristic functions of bounded intervals, which shows that $\F$ is onto since the range of a partial isometry is always closed. 
 
 In order to verify~\eqref{eqnFinverse}, it suffices to note that $\F\F^\ast$ is the identity operator in $\Lmu$ and $\F^\ast\F$ is the orthogonal projection onto $\D=\mul{\T}^\bot$ in $\cH$. 
\end{proof}

 In the following we will denote with $\mathrm{M}_{\mathrm{id}}$ the maximally defined operator of multiplication with the independent variable in $\Lmu$.

\begin{theorem}\label{th:TsimM}
 The transformation $\F$ maps the self-adjoint linear relation $\T$ to multiplication with the independent variable in $\Lmu$, that is, 
 \begin{align}
  \F\,\T\, \F^\ast = \mathrm{M}_{\mathrm{id}}.
 \end{align} 
\end{theorem}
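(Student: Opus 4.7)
The plan is to first establish the resolvent intertwining identity
\begin{equation*}
\F(\T-z)^{-1} = (\mathrm{M}_{\mathrm{id}}-z)^{-1}\F, \qquad z\in\C\setminus\R,
\end{equation*}
and then unfold the composition of relations on both sides of the claimed equality. Recall that for every $z\in\C\setminus\R$, the resolvent $(\T-z)^{-1}$ is an everywhere defined bounded operator on $\cH$ taking values in $\dom{\T}$, and self-adjointness of $\T$ forces $\dom{\T}\subseteq\mul{\T}^\bot=\D$.

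Combining~\eqref{eqnLDstietrans} with the spectral measure formula~\eqref{eqnSMF}, for all $f,g\in\cH$ and $z\in\C\setminus\R$ one has
\begin{equation*}
\spr{(\T-z)^{-1}f}{g}_\cH = \int_\R \frac{\F f(\lambda)\,\F g(\lambda)^\ast}{\lambda-z}\,d\mu(\lambda) = \spr{\F^\ast(\mathrm{M}_{\mathrm{id}}-z)^{-1}\F f}{g}_\cH,
\end{equation*}
so $(\T-z)^{-1}=\F^\ast(\mathrm{M}_{\mathrm{id}}-z)^{-1}\F$ on $\cH$. Applying $\F$ on the left and using $\F\F^\ast = I_{\Lmu}$ from Lemma~\ref{lemSTonto} gives the asserted intertwining.

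For the inclusion $\F\,\T\,\F^\ast\subseteq\mathrm{M}_{\mathrm{id}}$, fix $(f,g)\in\T$ and some $z\in\C\setminus\R$. From $(\T-z)^{-1}(g-zf)=f$, applying $\F$ and using the intertwining yields $\F g = \mathrm{M}_{\mathrm{id}}\F f$; as $f\in\dom{\T}\subseteq\D$, one has $\F^\ast\F f = f$, so $(\F f,\F g)\in\F\,\T\,\F^\ast$. For the reverse inclusion, take $\phi\in\dom{\mathrm{M}_{\mathrm{id}}}$, set $\psi = \mathrm{M}_{\mathrm{id}}\phi$, and fix $z\in\C\setminus\R$. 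Put
\begin{equation*}
\tilde f = (\T-z)^{-1}\F^\ast(\psi-z\phi)\in\dom{\T}\subseteq\D.
\end{equation*}
The intertwining together with $\F\F^\ast=I_{\Lmu}$ gives $\F\tilde f = (\mathrm{M}_{\mathrm{id}}-z)^{-1}(\psi-z\phi) = \phi$, and since $\tilde f\in\D$ this forces $\tilde f = \F^\ast\phi$. By construction of the resolvent, there is a $\tilde g\in\cH$ with $(\tilde f,\tilde g)\in\T$ and $\tilde g-z\tilde f = \F^\ast(\psi-z\phi)$; applying $\F$ and using $\F\F^\ast=I_{\Lmu}$ one finds $\F\tilde g = (\psi-z\phi)+z\phi = \psi$. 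Thus $(\phi,\psi) = (\F\tilde f,\F\tilde g)\in\F\,\T\,\F^\ast$.

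The main obstacle is merely the bookkeeping with the multi-valued part of $\T$: in the reverse inclusion, $\tilde g$ is only determined modulo $\mul{\T}=\ker{\F}$, but every admissible choice has the same $\F$-image $\psi$, so the construction is unambiguous. Apart from this, the proof is a standard consequence of the resolvent identity, using only that $\F^\ast$ inverts $\F$ on $\D$, which has already been established in Lemma~\ref{lemSTonto}.
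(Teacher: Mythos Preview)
Your proof is correct and takes a genuinely different route from the paper's. The paper argues via the spectral measures: it first establishes the equivalence $g\in\dom{\mathrm{M}_{\mathrm{id}}}\Leftrightarrow\F^\ast g\in\dom{\T}\Leftrightarrow g\in\dom{\F\,\T\,\F^\ast}$ by reading off integrability conditions from~\eqref{eqnSMF}, and then computes $\spr{\mathrm{M}_{\mathrm{id}}g}{h}_{\Lmu}=\int_\R\lambda\,dE_{\F^\ast g,\F^\ast h}(\lambda)=\spr{\F f}{h}_{\Lmu}$ for $(\F^\ast g,f)\in\T$, invoking an auxiliary lemma from another paper to justify the last equality. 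You instead go through the resolvent: from~\eqref{eqnLDstietrans} and~\eqref{eqnSMF} you obtain $(\T-z)^{-1}=\F^\ast(\mathrm{M}_{\mathrm{id}}-z)^{-1}\F$, hence the intertwining $\F(\T-z)^{-1}=(\mathrm{M}_{\mathrm{id}}-z)^{-1}\F$, and then chase both inclusions by hand. Your approach is self-contained and arguably more transparent, while the paper's is terser but leans on an external reference.

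One expository point: in your argument for $\F\,\T\,\F^\ast\subseteq\mathrm{M}_{\mathrm{id}}$ you start from $(f,g)\in\T$ and end by observing $(\F f,\F g)\in\F\,\T\,\F^\ast$, which reads as if you are proving the wrong inclusion. The missing sentence is that \emph{every} element of $\F\,\T\,\F^\ast$ is of the form $(\F f,\F g)$ for some $(f,g)\in\T$ with $f=\F^\ast\F f$; this follows immediately from the definition of the relation composite together with $\F\F^\ast=I_{\Lmu}$. With that stated, the step ``$\F g=\mathrm{M}_{\mathrm{id}}\F f$'' you derived indeed yields the inclusion. The reverse inclusion is clean as written, and your remark about the ambiguity of $\tilde g$ modulo $\mul{\T}=\ker{\F}$ is exactly the right way to handle the multi-valued part.
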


\begin{proof}
 First of all, we infer from~\eqref{eqnSMF} that for every $g\in\Lmu$ one has  
 \begin{align*}
  g\in\dom{\mathrm{M}_{\mathrm{id}}} 
              & \quad\Leftrightarrow\quad \F^\ast g\in\dom{\T} \quad\Leftrightarrow\quad  g\in\dom{\F\,\T\,\F^\ast}.
 \end{align*}
 In this case, equation~\eqref{eqnSMF} and~\cite[Lemma~B.4]{MeasureSL} show  that 
 \begin{align*}
   \spr{\mathrm{M}_{\mathrm{id}} g}{h}_{\Lmu} & = \int_\R \lambda\, g(\lambda) h(\lambda)^\ast d\mu(\lambda) = \int_\R \lambda\, dE_{\F^\ast g,\F^\ast h}(\lambda) \\
                                                            & = \spr{f}{\F^\ast h}_{\cH}  = \spr{\F f}{h}_{\Lmu}, \quad h\in\Lmu, 
 \end{align*}
 whenever $(\F^\ast g,f)\in\T$, which yields the claim. 
 \end{proof}

 Note that Theorem~\ref{th:TsimM} establishes a connection between the spectral properties of (the operator part of) $\T$ and $\mathrm{M}_{\mathrm{id}}$. 
 In particular, the spectrum of $\T$ coincides with the support of the measure $\mu$ and thus can be read off the singularities of $m$. 



\section{The inverse spectral problem}

 We are now ready to present the main result of this article, the solution of the inverse spectral problem for our class of generalized indefinite strings. 
 In order to state it in a concise form, we introduce the map  
\begin{align}\begin{split}
\SMP: \left\lbrace \begin{array}{rll}   \Strings      & \rightarrow & \Nevan \\
                                                     (L,\omega,\dip) &    \mapsto  & m  \end{array} \right.
\end{split}\end{align}
where $\Nevan$ is the class of all Herglotz--Nevanlinna functions and $\Strings$ is the set of generalized strings, that is, the set $\Strings$ consist of all triples $(L,\omega,\dip)$ such that $L\in(0,\infty]$, $\omega$ is a real-valued distribution in $H^{-1}_{\loc}[0,L)$ and $\dip$ is a non-negative Borel measure on $[0,L)$. 
 Our proof relies on L.\ de Branges' solution of the inverse spectral problem for canonical first order systems; see Appendix~\ref{sec:cansys} for a very brief summary.  
 More precisely, we will transform the differential equation~\eqref{eqnDEho} to a canonical system in standard form with a trace normed Hamiltonian on the semi-axis by modifying a known transformation for usual strings; cf.\ \cite{gk, kawiwo07, lawi98}.

 \begin{theorem}\label{thmIP}
  The map $\SMP$ is a bijection. 
\end{theorem}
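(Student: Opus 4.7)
The plan is to reduce bijectivity of $\SMP$ to L.\ de Branges' inverse spectral theorem for canonical first-order systems (cf.~Appendix~\ref{sec:cansys}), which provides a bijection between (equivalence classes of) trace-normed Hamiltonians $H$ on $[0,\infty)$ and the class $\Nevan$. Concretely, I would construct a bijection $\Phi$ from $\Strings$ onto the set of trace-normed Hamiltonians on $[0,\infty)$ that intertwines the Weyl--Titchmarsh functions, so that $\SMP$ factors as the composition of $\Phi$ with de Branges' map.

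For the forward direction, fix $(L,\omega,\dip)\in\Strings$, let $\Wr$ be the normalized anti-derivative of $\omega$, and introduce a Liouville-type change of variable
\[
 \xi(x) = x + \int_0^x \Wr(t)^2\,dt + \dip([0,x)), \qquad x\in[0,L),
\]
mapping $[0,L)$ strictly monotonically onto some interval $[0,\tilde L)$ with $\tilde L\in(0,\infty]$. Using the quasi-derivative appearing in~\eqref{eqnQD}, a direct check shows that a suitable linear reshuffling of the fundamental matrix of~\eqref{eqnDEho}, built from $\theta(z,\cdot)$ and $z\phi(z,\cdot)$, solves a canonical system $J Y'(\xi) = z H(\xi) Y(\xi)$ on $[0,\tilde L)$ whose Hamiltonian $H$ is determined by
\[
 H(\xi)\,d\xi \;=\; \begin{pmatrix} 1 & \Wr(x) \\ \Wr(x) & \Wr(x)^2 \end{pmatrix}\! dx \;+\; \begin{pmatrix} 0 & 0 \\ 0 & 1 \end{pmatrix}\! d\dip(x),
\]
so that $\tr H(\xi) = 1$ almost everywhere by the choice of $\xi$. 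Extending $H$ by the constant Hamiltonian $\begin{pmatrix}1 & 0 \\ 0 & 0\end{pmatrix}$ on $[\tilde L,\infty)$ whenever $\tilde L<\infty$ produces a trace-normed Hamiltonian on the full half-line, and Lemma~\ref{lemWTasQ} combined with the analogous characterization of the Weyl coefficient for canonical systems identifies its Weyl--Titchmarsh function with $m$.

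For the inverse direction, given any trace-normed Hamiltonian $H$ on $[0,\infty)$, I would first strip off a maximal right tail on which $H = \begin{pmatrix}1 & 0 \\ 0 & 0\end{pmatrix}$ to identify $\tilde L$ (and eventually $L$), then decompose the matrix measure $H\,d\xi$ into an absolutely continuous part with respect to Lebesgue measure on $[0,\tilde L)$ and a singular part. Reading off the parameters of the rank-one absolutely continuous component gives $\Wr$ as a function of $x$, while the singular part (which, by trace normalization, must be supported by pure $\begin{pmatrix}0 & 0 \\ 0 & 1\end{pmatrix}$ contributions) recovers $\dip$; inverting $\xi$ then returns $L$ and the distribution $\omega$ as the derivative of $\Wr$. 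Injectivity of $\SMP$ follows because $\Phi$ is well defined and its image determines $m$ uniquely, while surjectivity amounts to verifying that this reconstruction lies in $\Strings$ and recovers the Hamiltonian one started with.

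The main obstacle is the structural analysis of trace-normed $2\times 2$ Hamiltonians required for the inverse map: one must show that every such $H$, after removal of its $\begin{pmatrix}1 & 0 \\ 0 & 0\end{pmatrix}$-tail, admits a unique decomposition of the form above with $\Wr\in L^2_{\loc}$ and $\dip$ a non-negative Borel measure. The low regularity of $\omega$ and the mutual singularity of Lebesgue measure and $\dip$ make both the forward derivation, which depends on a careful application of the integral form~\eqref{eqnQD}, and the extraction of parameters in the inverse direction technically delicate; conceptually, however, the decomposition is forced by the requirement $\tr H = 1$ together with the pointwise rank-at-most-one structure of the absolutely continuous component, which must factor through a vector of the form $(1,\Wr)^T$ whenever it is not proportional to $(0,1)^T$.
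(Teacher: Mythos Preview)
Your overall strategy---reduce to de~Branges' theorem via a Liouville-type change of variable---is exactly the route the paper takes, and your forward construction of the trace-normed Hamiltonian from $(L,\omega,\dip)$ is essentially correct (up to a harmless swap of the roles of the $(1,1)$ and $(2,2)$ entries compared with the paper's convention; note, however, that your choice of extension matrix $\left(\begin{smallmatrix}1&0\\0&0\end{smallmatrix}\right)$ is inconsistent with your own Hamiltonian formula, since in your indexing that matrix would continue to increase $x$ rather than freeze it).

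The genuine gap is in the inverse direction. Two of your structural claims are false. First, $\dip$ is \emph{not} assumed to be mutually singular with Lebesgue measure; it is an arbitrary non-negative Borel measure on $[0,L)$ and may well have an absolutely continuous part. Second, and consequently, the Hamiltonian $H(\xi)$ does \emph{not} have rank at most one almost everywhere: whenever $\dip$ has a nontrivial absolutely continuous density, your own formula gives (at such points, before normalization)
\[
  \begin{pmatrix}1&\Wr\\\Wr&\Wr^2\end{pmatrix}dx+\begin{pmatrix}0&0\\0&1\end{pmatrix}\rho\,dx
  =\begin{pmatrix}1&\Wr\\\Wr&\Wr^2+\rho\end{pmatrix}dx,
\]
which has determinant $\rho>0$. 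For a concrete example, take $L=\infty$, $\Wr\equiv 0$, $d\dip=dx$; then $H\equiv\tfrac12 I$. So your proposed ``absolutely continuous versus singular'' splitting of $H\,d\xi$ on the canonical-system side cannot separate the $(1,\Wr)^T$-part from the $\dip$-part: on that side $H\,d\xi$ is already absolutely continuous and the two contributions are blended.

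The paper circumvents this by never attempting a rank decomposition of $H$. Instead it reads off the change of variable directly from one diagonal entry, defining $\xi(s)=\int_0^s H_{22}(t)\,dt$ (paper's indexing; this is $H_{11}$ in yours), recovers $L$ as its limit, sets $\Wr(x)=H_{12}(\varsigma(x))/H_{22}(\varsigma(x))$ whenever the denominator is nonzero, and then \emph{defines} $\dip$ as the residual $\varsigma(x)-x-\int_0^x\Wr^2$, checking non-negativity via $\det H\geq 0$. This works uniformly, regardless of whether $\dip$ has an absolutely continuous part. Your sketch can be repaired along these lines, but as written the reconstruction step does not go through.
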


 \begin{proof}
   {\em Injectivity.}
  With the definition in~\eqref{eqnDEThetaInt}, we consider the matrix function
  \begin{align*}
    Y(z,x) = \begin{pmatrix} \theta(z,x) & -z\phi(z,x) \\ -z^{-1}\theta^\qd(z,x) & \phi^\qd(z,x) \end{pmatrix}, \quad x\in[0,L),~ z\in\C. 
  \end{align*}
  Note that $\theta^\qd(\ledot,x)$ has a double root at zero which renders $Y$ well-defined. 
  It is an immediate consequence of~\eqref{eqnDEThetaInt} that this function satisfies the integral equation 
  \begin{align}\begin{split}\label{eqnIPYIE}
   Y(z,x) = \begin{pmatrix} 1 & 0 \\ 0 & 1 \end{pmatrix} & + z \int_0^x  \begin{pmatrix} - \Wr(t) & -1 \\ \Wr(t)^2 & \Wr(t) \end{pmatrix} Y(z,t) dt \\
                                                                                     & + z \int_{[0,x)}  \begin{pmatrix} 0 & 0 \\ 1 & 0 \end{pmatrix}Y(z,t) d\dip(t), \quad x\in[0,L),~z\in\C.
  \end{split}\end{align} 
  
  In order to transform~\eqref{eqnIPYIE} into a canonical system in standard form with a trace normed Hamiltonian, we first introduce the function $\varsigma:[0,L]\rightarrow[0,\infty]$ by 
  \begin{align}\label{eqnIPsigmatrans}
   \varsigma(x) = x + \int_0^x \Wr(t)^2 dt + \int_{[0,x)} d\dip, \quad x\in[0,L], 
  \end{align}
  as well as its generalized inverse $\xi$ on $[0,\infty)$ via 
  \begin{align}\label{eqnIPxitrans}
   \xi(s) = \sup\left\lbrace x\in[0,L)\,|\, \varsigma(x)\leq s\right\rbrace, \quad s\in[0,\infty). 
  \end{align}
  Let us point out explicitly that $\xi(s)=L$ for $s\in[\varsigma(L),\infty)$ provided that $\varsigma(L)$ is finite.
  On the other side, if $\varsigma(L)$ is not finite, then $\xi(s)<L$ for every $s\in[0,\infty)$ but $\xi(s)\rightarrow L$ as $s\rightarrow\infty$. 
  Since $\varsigma$ is strictly increasing, we infer that $\xi$ is non-decreasing and satisfies
  \begin{align}\label{eqnXiSigma}
   \xi \circ\varsigma(x) & = x, \quad x\in[0,L); &  \varsigma\circ \xi(s) & = \begin{cases} s, & s\in \ran{\varsigma},\\ \sup\,\lbrace t\in\ran{\varsigma}\,|\, t\leq s\rbrace, & s\not\in\ran{\varsigma}. \end{cases} 
  \end{align}
  Moreover, it can be deduced from~\eqref{eqnIPsigmatrans} that $\xi$ is locally absolutely continuous with
  \begin{align*}
   0\leq \xi'\leq 1
  \end{align*} 
  almost everywhere on $[0,\infty)$. 
  For future purposes, let us also mention that a substitution using, for example, \cite[Theorem~3.6.1]{bo07} shows that 
  \begin{align}\label{eqnSubstSigmaXi}
   \int_0^{\varsigma(x)} F(\xi(t))dt = \int_0^x F(t)\left(1+\Wr(t)^2\right)dt + \int_{[0,x)} F(t)d\dip(t), \quad x\in[0,L),
   \end{align}
  for every continuous function $F$ on $[0,L)$. 
  
  After these preliminary definitions, we now introduce the matrix function 
  \begin{align*}
   U(z,s)  = \begin{pmatrix} 1 & 0 \\ z(s-\varsigma\circ\xi(s)) & 1 \end{pmatrix}Y(z,\xi(s)), \quad s\in[0,\infty),~z\in\C. 
  \end{align*}
  Here note that if $\xi(s)=L$ for some $s\in[0,\infty)$, then $Y(z,\xi(s))$ has to be interpreted as the limit of $Y(z,x)$ as $x\rightarrow L$, which is known to exist in this case. 
  Since $\xi$ is locally constant on $[0,\infty)\backslash\ran{\varsigma}$,  
  we first obtain the equality 
  \begin{align}\label{eqnAuIdI}
   \xi'(s) U(z,s) = \xi'(s) Y(z,\xi(s)), \quad z\in\C, 
  \end{align}
  for almost all $s\in[0,\infty)$. 
  Moreover, the very definition of $U$ guarantees that  
  \begin{align}\label{eqnAuIdII}
\begin{pmatrix} 1 & 0 \end{pmatrix}   U(z,s)  = \begin{pmatrix} 1 & 0 \end{pmatrix} Y(z,\xi(s)), \quad s\in[0,\infty),~z\in\C.  
  \end{align}
  After using~\eqref{eqnIPYIE}, applying formula~\eqref{eqnSubstSigmaXi}, another substitution (use, for example, \cite[Corollary~5.4.4]{bo07}) as well as the identities~\eqref{eqnAuIdI} and~\eqref{eqnAuIdII}, we see that the function $U$ satisfies the integral equation 
 \begin{align}\label{eqnIEcansysSF}
  U(z,s) = \begin{pmatrix} 1 & 0 \\ 0 & 1 \end{pmatrix} - z \int_0^s  \begin{pmatrix} 0 & 1 \\ -1 & 0 \end{pmatrix} H(t)U(z,t) dt, \quad s\in[0,\infty),~z\in\C,
 \end{align}
 where  the matrix function $H$ is given by 
  \begin{align}\label{eqnHamequ}
   H(s) = \begin{pmatrix}1 - \xi'(s) & \xi'(s)\Wr(\xi(s)) \\ \xi'(s)\Wr(\xi(s)) & \xi'(s) \end{pmatrix}, \quad s\in[0,\infty).
  \end{align}
  Note that $H$ is  locally integrable (see \cite[Corollary~5.4.4]{bo07}) on $[0,\infty)$ and that $H(s)$ is real, symmetric, non-negative definite and trace normed for almost all $s\in[0,\infty)$.
  In order to be precise, non-negative definiteness holds since the function 
  \begin{align*}
   \int_0^s 1-\xi'(t) - \xi'(t)\Wr(\xi(t))^2dt & = s - \xi(s) - \int_0^{\xi(s)} \Wr(t)^2 dt \\ & = s - \varsigma\circ\xi(s) + \int_{[0,\xi(s))} d\dip, \quad s\in[0,\infty),
  \end{align*}
  is non-decreasing, 
  which shows that $\det H$ is non-negative almost everywhere. 
 
 Our next observation is that the function $m$ is also the Weyl--Titchmarsh function corresponding to the canonical system with Hamiltonian $H$ on $[0,\infty)$, that is, 
 \begin{align}\label{eqnWTmcansys}
  m(z) 
              = \lim_{s\rightarrow\infty} \frac{U_{11}(z,s)}{U_{12}(z,s)}, \quad z\in\C\backslash\R, 
 \end{align}
 which holds in view of Lemma~\ref{lemWTasQ} and the fact that $\xi(s)\rightarrow L$ as $s\rightarrow\infty$.
 As a consequence, the uniqueness part of Theorem~\ref{th:dB01} guarantees that $m$ uniquely determines $H$ almost everywhere on $[0,\infty)$. 
 In particular, this shows that the function $\xi$ is uniquely determined and hence so are $\varsigma$ and $L$. 
 Since $\xi$ maps null sets into null sets, we furthermore infer that $\Wr$ is determined almost everywhere on $[0,L)$ and thus the distribution $\omega$ is uniquely determined. 
 Finally, the fact that the measure $\dip$ is uniquely determined by $m$ as well follows from~\eqref{eqnIPsigmatrans}.  

   {\em Surjectivity.}
  Let $m\in\Nevan$ be an arbitrary Herglotz--Nevanlinna function. 
  According to the existence part of Theorem~\ref{th:dB01}, there is a locally integrable, trace normed, real, symmetric and non-negative definite $2\times2$ matrix function $H$ on $[0,\infty)$ such that the unique solution $U$ of the integral equation~\eqref{eqnIEcansysSF} satisfies~\eqref{eqnWTmcansys}.    

  We introduce the locally absolutely continuous, non-decreasing function $\xi$ by 
  \begin{align}\label{eqnIPSurDefxi}
   \xi(s) = \int_0^s H_{22}(t)dt, \quad s\in[0,\infty), 
  \end{align} 
  as well as its generalized inverse $\varsigma:[0,L]\rightarrow[0,\infty]$ via  
  \begin{align*}
   \varsigma(x) = \sup\,\lbrace s\in[0,\infty)\,|\, \xi(s)< x\rbrace\cup\lbrace 0\rbrace, \quad x\in[0,L]. 
  \end{align*}
  Here, the quantity $L\in(0,\infty]$ denotes the limit of $\xi(s)$ as $s\rightarrow\infty$ which is non-zero indeed since $H_{22}$ does not vanish almost everywhere on $[0,\infty)$. 
  The function $\varsigma$ is readily verified to be left-continuous, strictly increasing and to satisfy~\eqref{eqnXiSigma}.  
  Next we define the real-valued function $\Wr$ on $[0,L)$ by
  \begin{align*}
   \Wr(x) = \begin{cases} H_{22}(\varsigma(x))^{-1} H_{12}(\varsigma(x)), & \text{if }H_{22}(\varsigma(x))\not=0, \\ 0, & \text{if }H_{22}(\varsigma(x))=0. \end{cases}
  \end{align*}
  Since $\xi$ is constant on $[\varsigma\circ\xi(s),s]$ whenever $\varsigma\circ\xi(s)\not=s$, we infer that  
  \begin{align*}
   \xi'(s)\Wr(\xi(s)) = H_{12}(s) 
  \end{align*}
  for almost all $s\in[0,\infty)$. 
  Moreover, because $\det H$ is non-negative, we may estimate 
  \begin{align}\label{eqnWrL2}
   \xi'(s) \Wr(\xi(s))^2 \leq H_{11}(s)
  \end{align}
  for almost all $s\in[0,\infty)$. 
  In view of \cite[Corollary~5.4.4]{bo07}, this shows that $\Wr$ belongs to $L^2_{\loc}[0,L)$ and thus gives rise to a real-valued distribution $\omega$ in $H^{-1}_{\loc}[0,L)$. 
  Finally, we define the Borel measure $\dip$ on $[0,L)$ via its distribution function by 
  \begin{align*}
   \int_{[0,x)} d\dip = \varsigma(x) - x - \int_0^x \Wr(t)^2, \quad x\in[0,L). 
  \end{align*}
  In order to see that the measure $\dip$ is actually non-negative, we perform a substitution using \cite[Corollary~5.4.4]{bo07} to obtain 
  \begin{align*}
   \int_{[0,x)} d\dip =  \int_{0}^{\varsigma(x)} 1-  \xi'(s) - \xi'(s) \Wr(\xi(s))^2 ds, \quad x\in[0,L), 
  \end{align*}
  and note that the integrand on the right-hand side is non-negative almost everywhere in view of~\eqref{eqnIPSurDefxi} and~\eqref{eqnWrL2} as well as the fact that $H$ is trace normed.  
 
  With these definitions, we now introduce the matrix function $Y$ by  
  \begin{align*}
   Y(z,x) = U(z,\varsigma(x)), \quad x\in[0,L),~z\in\C. 
  \end{align*}
  Since $\xi$ is locally constant on $[0,\infty)\backslash\ran{\varsigma}$, we again obtain~\eqref{eqnAuIdI} for almost all $s\in[0,\infty)$. 
  Furthermore, the integral equation~\eqref{eqnIEcansysSF} shows that~\eqref{eqnAuIdII} holds as well. 
  Upon employing those identities, a substitution (use, for example, \cite[Corollary~5.4.4]{bo07}) and applying formula~\eqref{eqnSubstSigmaXi}, we obtain from~\eqref{eqnIEcansysSF} that the function $Y$ satisfies the integral equation~\eqref{eqnIPYIE}. 
  Clearly, the functions $Y_{11}$ and $Y_{12}$ are solutions of the homogeneous differential equation~\eqref{eqnDEho} with the initial conditions
  \begin{align*}
   Y_{11}'\NLz & = Y_{12}(z,0) = 0, & Y_{11}(z,0) & = 1, &  Y_{12}'\NLz & = -z, 
  \end{align*}
  for every $z\in\C$. 
  In view of Lemma~\ref{lemWTasQ} and~\eqref{eqnWTmcansys}, the function $m$ turns out to be the Weyl--Titchmarsh function corresponding to the triple $(L,\omega,\dip)$.
 \end{proof}
  
We conclude this section with a result about continuity properties of the correspondence $\SMP$. 
To this end, we first equip $\Nevan$ with the topology of locally uniform convergence on $\C\backslash\R$. 
Of course, the map $\SMP$ will turn out to be a homeomorphism if we endow $\Strings$ with a suitable topology. 
Instead of describing this topology in terms of bases or open sets, we simply show what convergence with respect to this topology means. 
For this purpose, let $m_n$ be a Herglotz--Nevanlinna function for every $n\in\N$ and denote all corresponding quantities as usual but with an additional subscript. 
In particular, the functions $\varsigma_n$ are defined analogously to $\varsigma$ in~\eqref{eqnIPsigmatrans}. 

\begin{proposition}\label{propSMPcont}
 The Weyl--Titchmarsh functions $m_n$ converge locally uniformly to $m$ if and only if  
 \begin{align}\label{eqnLiminfL}
    \sup\,\lbrace x\in\lbrace 0\rbrace \cup [0,\liminf_{k\rightarrow\infty} L_{n_k})\,|\, \limsup_{k\rightarrow\infty} \varsigma_{n_k}(x) < \infty \rbrace = L
  \end{align}
 holds for each subsequence $n_k$ and\footnote{Note that all quantities are well-defined for large enough $n\in\N$ in view of~\eqref{eqnLiminfL}.} 
 \begin{align}\label{eqnSMPhomeo}
   \lim_{n\rightarrow\infty} \int_0^{x} \Wr_n(t)dt & = \int_0^x \Wr(t)dt, & \lim_{n\rightarrow\infty} \int_0^x \varsigma_n(t)dt & = \int_0^x \varsigma(t) dt, 
 \end{align}
 locally uniformly for all $x\in[0,L)$. 
 Moreover, the functions $m_n$ converge locally uniformly to $\infty$ if and only if the supremum in~\eqref{eqnLiminfL} is zero for each subsequence $n_k$.
\end{proposition}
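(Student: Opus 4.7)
The strategy is to reduce the assertion to the analogous continuity statement for trace normed canonical systems collected in Appendix~\ref{sec:cansys}. Recall from the proof of Theorem~\ref{thmIP} that the construction in~\eqref{eqnIPsigmatrans}--\eqref{eqnHamequ} sets up a bijection between the class $\Strings$ of generalized indefinite strings and the class of locally integrable, trace normed, real symmetric non-negative $2\times 2$ Hamiltonians $H$ on $[0,\infty)$, in such a way that the Weyl--Titchmarsh function of the string agrees with the Weyl--Titchmarsh function of the canonical system. Denoting by $H_n$ and $H$ the Hamiltonians associated with the triples $(L_n,\omega_n,\dip_n)$ and $(L,\omega,\dip)$, the continuity part of de Branges' inverse spectral theorem yields that $m_n\to m$ locally uniformly on $\C\backslash\R$ if and only if
\begin{align*}
\int_0^s H_n(t)\, dt \longrightarrow \int_0^s H(t)\, dt
\end{align*}
locally uniformly on $[0,\infty)$. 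Since $H_n$ and $H$ are trace normed, the latter is equivalent to the two scalar conditions $\xi_n(s)\to\xi(s)$ and $\int_0^{\xi_n(s)}\Wr_n(t)\, dt \to \int_0^{\xi(s)}\Wr(t)\, dt$ locally uniformly in $s\in[0,\infty)$; see~\eqref{eqnHamequ} and~\eqref{eqnIPSurDefxi}.

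The remaining task is to translate these two conditions into the claimed conditions on $\varsigma_n$ and $\Wr_n$ on the interval $[0,L)$. Exploiting the duality between a non-decreasing function and its generalized inverse encoded in~\eqref{eqnXiSigma}, locally uniform convergence $\xi_n\to\xi$ on $[0,\infty)$ turns out to be equivalent to: for every $x<L$ one has $x<L_n$ eventually and $\varsigma_n(x)$ stays bounded (which, after the standard subsequence argument, is exactly~\eqref{eqnLiminfL}), together with pointwise convergence $\varsigma_n(x)\to\varsigma(x)$ at every continuity point of $\varsigma$; by monotonicity the latter is in turn equivalent to the $L^1_{\loc}$-type convergence $\int_0^x\varsigma_n(t)\, dt\to\int_0^x\varsigma(t)\, dt$ locally uniformly on $[0,L)$, which is the right-hand equality in~\eqref{eqnSMPhomeo}. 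A substitution in the spirit of~\eqref{eqnSubstSigmaXi} then transforms the second canonical-system condition $\int_0^{\xi_n(s)}\Wr_n\, dt\to\int_0^{\xi(s)}\Wr\, dt$ into $\int_0^x \Wr_n(t)\, dt\to\int_0^x\Wr(t)\, dt$ locally uniformly on $[0,L)$, which is the left-hand equality in~\eqref{eqnSMPhomeo}.

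For the degenerate case $m_n\to\infty$, the same correspondence identifies the limit with the constant Hamiltonian having $(1,1)$-entry equal to $1$ and all other entries equal to $0$, which means $\xi\equiv 0$ and hence that the ``length'' of the limiting string is zero. Repeating the subsequence argument then yields that this happens precisely when the supremum in~\eqref{eqnLiminfL} vanishes along every subsequence. The principal technical obstacle I anticipate is the careful handling of the non-strict monotonicity: flat parts of $\xi_n$ correspond to jumps of $\varsigma_n$ and vice versa (reflecting either atoms of $\dip_n$ or regions where the Hamiltonian is singular), so pointwise convergence of $\varsigma_n$ may fail at points where the integrated statement still holds. This is precisely why~\eqref{eqnSMPhomeo} is phrased in integrated form and why condition~\eqref{eqnLiminfL} must be formulated along every subsequence rather than for $\lim L_n$ or $\lim\varsigma_n(x)$ themselves.
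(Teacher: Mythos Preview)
Your overall strategy coincides with the paper's: both reduce to the canonical-system continuity result (Proposition~\ref{prop:dB_corresp}), rewrite it as locally uniform convergence of $\xi_n\to\xi$ and of $\int_0^{\xi_n(s)}\Wr_n\to\int_0^{\xi(s)}\Wr$, and then translate these into the stated conditions on $\varsigma_n$ and $\Wr_n$.

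Two execution points differ from the paper and deserve care. First, for the converse implication the paper does \emph{not} establish a direct equivalence between convergence of $\xi_n$ and convergence of $\varsigma_n$ as you propose; it proves only the forward implication by explicit estimates and then obtains the converse via a normal-families argument: any subsequence of $m_n$ has a further subsequence converging locally uniformly to some $m_0$ (or to $\infty$), and the forward direction identifies the corresponding triple as $(L,\omega,\dip)$, forcing $m_0=m$. Your direct duality route is viable but would itself require a Helly-type compactness step to recover $\varsigma_n\to\varsigma$ from the integrated statement. Second, the $\Wr_n$ translation is not just ``a substitution in the spirit of~\eqref{eqnSubstSigmaXi}'': evaluating the canonical-system condition at $s=\varsigma(x)$ yields $\int_0^{\xi_n(\varsigma(x))}\Wr_n\to\int_0^{x}\Wr$, and one still has to control $\int_{\xi_n(\varsigma(x))}^{x}\Wr_n$. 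The paper does this via Cauchy--Schwarz, bounding the square of this integral by $|x-\xi_n(\varsigma(x))|\cdot\max(\varsigma_n(x),\varsigma(x))$, which tends to zero by~\eqref{eqnLiminfL} and the first condition in~\eqref{eqnSMPcontH}. Without this estimate the step is a genuine, if small, gap.
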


\begin{proof}
 Upon considering the corresponding Hamiltonians as in~\eqref{eqnHamequ}, we infer from Proposition~\ref{prop:dB_corresp} after a substitution (use, for example, \cite[Corollary~5.4.4]{bo07}) that the Weyl--Titchmarsh functions $m_n$ converge locally uniformly to $m$ if and only if   
 \begin{align}\label{eqnSMPcontH}
  \lim_{n\rightarrow\infty} \xi_n(s) & =\xi(s), & \lim_{n\rightarrow\infty} \int_0^{\xi_n(s)} \Wr_n(t)dt & = \int_0^{\xi(s)} \Wr(t)dt, 
 \end{align}
 locally uniformly for all $s\in[0,\infty)$. 
 Moreover, if the functions $m_n$ converge locally uniformly to $\infty$, then the functions $\xi_n$ converge locally uniformly to zero. 
 
 To begin with, we first suppose that $m_n$ converge locally uniformly to $m$. 
 Given an $x\in[0,L)$ we can find an $s\in[0,\infty)$ such that $x<\xi(s)<L$ and from~\eqref{eqnSMPcontH} we know that $x<\xi_n(s)<L$ for large enough $n\in\N$.
 By monotonicity we also have $x<\xi_n(t)\leq L_n$ for all $t\in[s,\infty)$, which shows the second inequality in 
 \begin{align}\label{eqnSMPcontLn}
  \sup\,\lbrace x\in[0,\liminf_{n\rightarrow\infty} L_{n})\,|\, \limsup_{n\rightarrow\infty} \varsigma_{n}(x) < \infty \rbrace \leq L \leq \liminf_{n\rightarrow\infty} L_n.
 \end{align}
 In order to prove the first inequality, let $x\in[0,\liminf_{n\rightarrow\infty} L_n)$ such that $\varsigma_n(x)$ is uniformly bounded for all large enough $n\in\N$.  
 For every given $\varepsilon>0$ we have 
 \begin{align*}
  |x-\xi\circ\varsigma_n(x)| = |\xi_n\circ\varsigma_n(x)-\xi\circ\varsigma_n(x)| \leq \varepsilon
 \end{align*} 
 for all large enough $n\in\N$ by~\eqref{eqnSMPcontH}. 
 Since the range of $\xi$ is bounded by $L$, this guarantees that $x-\varepsilon\leq \xi\circ\varsigma_n(x) \leq L$ and thus implies the first inequality in~\eqref{eqnSMPcontLn}. 
 Now if this inequality was strict, then we would have $\limsup_{n\rightarrow\infty} \varsigma_n(x)=\infty$ for some $x\in[0,L)$. 
 Thus for every fixed $s\in[0,\infty)$ there would be infinitely many $n\in\N$ such that 
 \begin{align*}
  \xi_n(t) \leq \xi_n\circ\varsigma_n(x) = x, \quad t\in[0,s).
 \end{align*}
 But from~\eqref{eqnSMPcontH} we would infer that $\xi$ is bounded by $x$ on $[0,s)$ as well and thus the contradiction $L\leq x$. 
 Of course, the same arguments apply to any subsequence $n_k$ which establishes~\eqref{eqnLiminfL}. 

 In order to verify the first limit in~\eqref{eqnSMPhomeo}, one notes that  
 \begin{align*}
  \frac{1}{2} \biggl|\int_0^{x} \Wr(t)-\Wr_n(t)dt\biggr|^2 &  \leq \biggl|\int_0^{\xi\circ\varsigma(x)} \Wr(t)dt - \int_0^{\xi_n\circ\varsigma(x)}\Wr_n(t)dt\biggr|^2 + \biggl| \int_{\xi_n\circ\varsigma(x)}^{\xi\circ\varsigma(x)} \Wr_n(t)dt \biggr|^2
 \end{align*}
 for every $x\in[0,L)$ and all large enough $n\in\N$ (that is, such that $x<L_n$). 
 Using the Cauchy--Schwarz inequality, the second term can be further estimated by 
 \begin{align*}
     |\xi\circ\varsigma(x)-\xi_n\circ\varsigma(x)| \, \max(\varsigma_n(x),\varsigma(x)),
 \end{align*}
 where the last factor is uniformly bounded in $n$ by~\eqref{eqnLiminfL}.
 Since $\varsigma(x)$ varies in compact subsets of $[0,\infty)$ if $x$ varies in compact subsets of $[0,L)$, we infer from~\eqref{eqnSMPcontH} that the first limit in~\eqref{eqnSMPhomeo} holds locally uniformly for all $x\in[0,L)$. 
 For the second limit in~\eqref{eqnSMPhomeo}, we first integrate by parts and use~\eqref{eqnSubstSigmaXi} to obtain  
 \begin{align*}
  \int_0^x \varsigma_n(t) - \varsigma(t)dt = \int_{\varsigma(x)}^{\varsigma_n(x)} x - \xi_n(s)ds + \int_0^{\varsigma(x)} \xi(s) - \xi_n(s)ds
 \end{align*}
 for all $x\in[0,L)$ and large enough $n\in\N$. 
 Since on the domain of integration the integrand of the first integral can be estimated by 
 \begin{align*}
  |x-\xi_n(s)| \leq |x -\xi_n\circ\varsigma(x)| = |\xi\circ\varsigma(x) - \xi_n\circ\varsigma(x)|,
 \end{align*}
 we again conclude from~\eqref{eqnSMPcontH} that the second limit in~\eqref{eqnSMPhomeo} holds locally uniformly for all $x\in[0,L)$. 
 
 Now let us suppose that the functions $m_n$ converge locally uniformly to $\infty$ and pick an arbitrary subsequence $n_k$. 
 If  $\liminf_{k\rightarrow\infty} L_{n_k}$ is zero, then clearly so is the supremum in~\eqref{eqnLiminfL}.
 Otherwise, let $x\in[0,\liminf_{k\rightarrow\infty} L_{n_k})$ such that $\varsigma_{n_k}(x)$ is uniformly bounded for all large enough $k\in\N$.
  Since the functions $\xi_n$ converge locally uniformly to zero, we then have 
 \begin{align*}
  |x| = |\xi_{n_k}\circ\varsigma_{n_k}(x)| \rightarrow 0, \qquad k\rightarrow\infty, 
 \end{align*} 
 implying that $x$ has to be zero.  Thus, the supremum in~\eqref{eqnLiminfL} is zero as well. 
 
 In order to prove the converse directions, we first assume that~\eqref{eqnLiminfL} is valid for every subsequence $n_k$ and~\eqref{eqnSMPhomeo} holds locally uniformly for all $x\in[0,L)$. 
 Let $m_{n_k}$ be a subsequence and note that by the fundamental normality test (see \cite[Section~2.7]{sc93}), this subsequence has a subsequence that converges locally uniformly either to a function $m_0$ or to $\infty$. 
 Since the supremum in~\eqref{eqnLiminfL} is non-zero, we infer that this subsequence has to converge to a function $m_0$.
 If the corresponding triple $\Xi^{-1}(m_0)$ is denoted with $(L_0,\omega_0,\dip_0)$, then the first part of the proof shows that $L_0=L$, $\omega_0=\omega$ and $\dip_0=\dip$. 
 Consequently, this guarantees that $m_0=m$ and thus we conclude that $m_n$ converges locally uniformly to $m$. 
 
 Finally, we suppose that the supremum in~\eqref{eqnLiminfL} is zero for every subsequence $n_k$. 
 As before we note that every subsequence of $m_{n}$ has a subsequence that converges locally uniformly either to a function $m_0$ or to $\infty$. 
 If this subsequence converged to a function $m_0$ with corresponding triple $(L_0,\omega_0,\dip_0)$, then the first part of the proof would imply that the supremum in~\eqref{eqnLiminfL} equals $L_0>0$ for some subsequence $n_k$. 
 Thus, we infer that the functions $m_n$ converge locally uniformly to $\infty$. 
\end{proof} 
 
 \begin{remark}\label{rem:7.3}
   Note that the locally uniform convergence of~\eqref{eqnSMPhomeo} in Proposition~\ref{propSMPcont} can be replaced by simple pointwise convergence upon employing a compactness argument. 
   In fact, an inspection of the last part of the proof of Proposition~\ref{propSMPcont} shows that locally uniform convergence is not used at all. 
 \end{remark}

\section{Non-negative strings}\label{secEssPos}

In this concluding section we will show how our solution of the inverse spectral problem fits in with the classical result by M.\ G.\ Krein \cite{kr52}. 
To this end, we will first single out all those generalized strings in $\Strings$ which give rise to purely non-negative spectrum (the non-positive case can be treated analogously).  

\begin{lemma}\label{lemWTlinterm}
 The Weyl--Titchmarsh function $m$ has the asymptotics 
 \begin{align}\label{eq:c_1=v0}
  \lim_{\eta\rightarrow\infty} \frac{m(\I\eta)}{\I\eta} = \dip(\lbrace0\rbrace). 
 \end{align}
\end{lemma}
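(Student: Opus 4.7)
The plan is to first identify $\lim_{\eta\to\infty} m(\I\eta)/\I\eta$ with the linear coefficient $c_1$ in the Herglotz--Nevanlinna integral representation~\eqref{eqnWTmIntRep} of $m$, and then to show $c_1 = \dip(\{0\})$ by a two-sided argument.

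The first step is immediate. Dividing~\eqref{eqnWTmIntRep} by $z = \I\eta$ and letting $\eta\to\infty$, the terms $c_2/z$ and $-1/(Lz^2)$ clearly vanish, and the remaining integral also vanishes by dominated convergence since
\[
  \biggl|\frac{1}{\I\eta}\biggl(\frac{1}{\lambda-\I\eta}-\frac{\lambda}{1+\lambda^2}\biggr)\biggr|\le \frac{C}{1+\lambda^2}
\]
uniformly in $\eta\ge 1$, and this bound is $\mu$-integrable. Therefore $\lim_{\eta\to\infty} m(\I\eta)/\I\eta = c_1$.

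For the lower bound $c_1\ge \dip(\{0\})$, I would apply the Herglotz identity established in the proof of Lemma~\ref{lemHNm} to $z=\I\eta$, namely
\[
  \frac{\im m(\I\eta)}{\eta} = \frac{1}{\eta^2}\int_0^L |\psi'(\I\eta,x)|^2\, dx + \int_{[0,L)}|\psi(\I\eta,x)|^2\, d\dip(x).
\]
Since $\psi(\I\eta, 0) = \theta(\I\eta, 0) + m(\I\eta)\,\I\eta\,\phi(\I\eta, 0) = 1$, the $\dip$-integral is bounded below by $\dip(\{0\})$, and letting $\eta\to\infty$ yields $c_1\ge\dip(\{0\})$.

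For the matching upper bound, I would exploit the canonical system transformation from the proof of Theorem~\ref{thmIP}. The defining relation~\eqref{eqnIPsigmatrans} shows that $\varsigma$ has a jump of size $\dip(\{0\})$ at the origin; hence its generalized inverse $\xi$ vanishes identically on $[0, \dip(\{0\})]$, so that $\xi'\equiv 0$ almost everywhere on that interval and~\eqref{eqnHamequ} reduces to $H(s) = \mathrm{diag}(1,0)$ there. Solving~\eqref{eqnIEcansysSF} explicitly on this initial segment gives $U(z,\dip(\{0\}))=\bigl(\begin{smallmatrix} 1 & 0 \\ z\,\dip(\{0\}) & 1\end{smallmatrix}\bigr)$. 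Factorising $U(z,s)=V(z,s)\,U(z,\dip(\{0\}))$ for $s\ge \dip(\{0\})$, where $V$ is the transfer matrix of the canonical system shifted to $[\dip(\{0\}),\infty)$, and inserting this into~\eqref{eqnWTmcansys}, one obtains the additive decomposition $m(z) = \tilde m(z) + \dip(\{0\})\,z$, with $\tilde m$ the Weyl function of the shifted canonical system. Since $\xi$ is absolutely continuous with $\xi(\dip(\{0\}))=0$ and $\xi(s)>0$ for every $s>\dip(\{0\})$, $\xi'$ must be positive on a set of positive measure in every right-neighborhood of $\dip(\{0\})$; consequently the shifted Hamiltonian is not of the form $\mathrm{diag}(*,0)$ on any initial interval. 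The classical characterization of the linear coefficient at infinity of a canonical-system Weyl function via its initial degenerate interval (Appendix~\ref{sec:cansys}) then forces $\lim_{\eta\to\infty}\tilde m(\I\eta)/\I\eta=0$, and in combination with the decomposition this gives $c_1\le\dip(\{0\})$.

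The main obstacle is the final canonical-system step: establishing $\lim\tilde m(\I\eta)/\I\eta = 0$ for the shifted system. This is where the structural content of de Branges' theory enters, relating the absence of an initial degenerate segment in $H$ to the vanishing of the leading coefficient in the Herglotz--Nevanlinna integral representation of the corresponding Weyl function.
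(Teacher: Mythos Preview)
Your argument is correct, and it rests on the same key input as the paper's proof---namely Lemma~\ref{lem:c_1} from Appendix~\ref{sec:cansys}, which identifies the linear coefficient of a canonical-system Weyl function with the length of the initial interval on which the Hamiltonian equals $\mathrm{diag}(1,0)$. The paper, however, applies that lemma in one stroke: since by~\eqref{eqnHamequ} one has $H(s)=\mathrm{diag}(1,0)$ precisely when $\xi'(s)=0$, Lemma~\ref{lem:c_1} gives $c_1=\sup\{s:\xi(s)=0\}$, and then~\eqref{eqnIPsigmatrans}--\eqref{eqnIPxitrans} immediately yield $\sup\{s:\xi(s)=0\}=\dip(\{0\})$. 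Your route is more elaborate: you split into two inequalities, obtain the lower bound $c_1\ge\dip(\{0\})$ by an elementary estimate from the Herglotz identity (this is a pleasant self-contained argument), and then for the other direction you factor off the initial degenerate segment and invoke Lemma~\ref{lem:c_1} for the shifted system. Note that this second step in fact already gives the exact equality $c_1=\dip(\{0\})$, since your decomposition $m(z)=\tilde m(z)+\dip(\{0\})\,z$ together with $\lim_{\eta\to\infty}\tilde m(\I\eta)/(\I\eta)=0$ determines $c_1$, not merely bounds it from above; so your elementary lower bound, while correct, is redundant. In short: both proofs hinge on Lemma~\ref{lem:c_1}, but the paper applies it once to the original Hamiltonian, whereas you apply it to the shifted one after an unnecessary detour.
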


\begin{proof}
 By Lemma~\ref{lem:c_1} and~\eqref{eqnHamequ}, we conclude that 
 \begin{align*}
  \lim_{\eta\rightarrow\infty} \frac{m(\I\eta)}{\I\eta} = \sup\,\lbrace s\in [0,\infty)\,|\, \xi(s)=0\rbrace.
 \end{align*} 
 Upon taking~\eqref{eqnIPsigmatrans} and~\eqref{eqnIPxitrans} into account, we arrive at~\eqref{eq:c_1=v0}.
 \end{proof}

 Note that Lemma~\ref{lemWTlinterm} yields the coefficient of the linear term in the integral representation~\eqref{eqnWTmIntRep} for the Weyl--Titchmarsh function $m$. 
 We will provide more detailed high energy asymptotics for $m$ as an application of the results in \cite{AsymCS}. 
 
 \begin{lemma}\label{lem:muPos}
  The spectral measure $\mu$ is supported on $[0,\infty)$ if and only if $\dip$ vanishes on $(0,L)$ and $\Wr$ has a non-decreasing representative.  
 \end{lemma}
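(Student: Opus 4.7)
The plan is to identify the support of $\mu$ with the spectrum of the operator part of $\T$ (via Theorem~\ref{th:TsimM} and the subsequent remark), so that the claim reduces to showing $\T\ge 0$, i.e.\ $\spr{\tau f}{f}_\cH \ge 0$ for every $\f\in\T$. The central computation will be the quadratic-form identity
\[
\spr{\tau f}{f}_\cH \;=\; \omega(|\tau f_1|^2) \;+\; 2\,\re\!\int_{[0,L)} \tau f_1(x)\, \tau f_2(x)^\ast\, d\dip(x),
\]
valid for those $\f\in\T$ whose second component has compact support in $[0,L)$. This follows by applying the integration-by-parts identity from the proof of Theorem~\ref{thmTsa} with $h=\tau f^\ast$, invoking $\tau f_1(0)=0$ and $f_2=\tau f_1$ $\dip$-a.e.; the boundary contribution at $L$ vanishes trivially under the compact-support hypothesis. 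Such $\f$ form a core for the operator part of $\T$ via a standard cutoff/approximation argument resting on the Green's function structure for $-\partial^2$ on $\Hasto$ visible in the explicit formula for $\delta_x$, so it is enough to test positivity on them.

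For the sufficiency direction, both terms on the right are non-negative under the stated hypotheses: the $\dip$-integral vanishes because the only possible mass of $\dip$ sits at $0$, where $\tau f_1$ vanishes, and $\omega(|\tau f_1|^2)\ge 0$ because a non-decreasing representative of $\Wr$ is exactly the condition that $\omega|_{(0,L)}$ is a non-negative Borel measure (any $\delta_0$-contribution to $\omega$ being killed by $|\tau f_1|^2(0)=0$).

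For the necessity direction, I construct specific testing pairs in $\T$. If $\dip((a,b))>0$ for some $(a,b)\subset(0,L)$, choose $\tau f_1\in\Hasto$ real, compactly supported in $(0,L)$, equal to $1$ on $[a,b]$, and $\tau f_2=-t\,\indik_{(a,b)}$; Lemma~\ref{lemEE} with matched boundary data produces $f_1\in\Hasto$, and the identity evaluates to $\omega(\tau f_1^2) - 2t\,\dip((a,b))$, which is negative for sufficiently large $t$, contradicting $\T\ge 0$. Having established $\dip|_{(0,L)}=0$, for any non-negative $\phi\in H^1_{\cc}[0,L)$ supported in $(0,L)$ and any $\eps>0$, the choice $\tau f_1=\sqrt{\phi+\eps}\,\eta$ (with $\eta$ a smooth cutoff equal to $1$ on $\supp\phi$ and vanishing near $0$) together with $\tau f_2=0$ gives $\tau f_1^2 = \phi + \eps\eta^2$ and hence $\spr{\tau f}{f}_\cH = \omega(\phi) + \eps\,\omega(\eta^2) \ge 0$; sending $\eps\to 0$ yields $\omega(\phi)\ge 0$ for every such $\phi$, so $\omega|_{(0,L)}$ is a non-negative distribution and $\Wr$ admits a non-decreasing representative.

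The principal technical point will be verifying the core property used to pass from compactly supported second components to general $\f\in\T$, together with the existence of the $f_1\in\Hasto$ required in the necessity constructions: both reduce to standard matching of the two boundary conditions defining $\Hasto$ by means of Lemma~\ref{lemEE} applied to a right-hand side of compact support.
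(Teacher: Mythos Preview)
Your proposal is correct and follows essentially the same route as the paper: both derive the quadratic-form identity~\eqref{eq:qform} from the integration-by-parts computation in the proof of Theorem~\ref{thmTsa}, use Theorem~\ref{th:TsimM} to translate between $\supp\mu\subset[0,\infty)$ and $\T\ge0$, test on pairs $\f\in\T$ with $\tau f$ of compact support (which the paper likewise identifies as a core for $\T^{-1}$ via the construction in the proof of Theorem~\ref{thmTsa}---this, rather than Lemma~\ref{lemEE}, is the correct reference for producing $f_1\in\Hasto$), and argue necessity by choosing specific $\tau f$. The only noteworthy variation is that where the paper cites \cite{alkrlomi98} for the existence of an $H^1$ square root of a non-negative smooth function, you instead regularize via $\sqrt{\phi+\eps}\,\eta$ and let $\eps\to0$; this is an equally valid and arguably more self-contained device.
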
 
  
  \begin{proof}
   First of all we obtain from an integration by parts using~\eqref{eqnDEre1} and~\eqref{eqnTsaQD} that 
   \begin{align}\label{eq:qform}
     \spr{\tau f}{f}_{\cH} = -\int_0^L \Wr(x) (|\tau f_1|^2)'(x)dx + 2 \int_{[0,L)} \re(\tau f_1(x)^\ast \tau f_2(x)) d\dip(x) 
   \end{align}
  for every $\f\in\T$ such that $\tau f$ has compact support in $[0,L)$. 
  Also recall that we saw in the second part of the proof of Theorem~\ref{thmTsa} that the range of $\T$ contains all functions in $\cH$ with compact support. 
  Moreover, that proof showed that the subspace of all functions in $\cH$ with compact support in $[0,L)$ is a core for the inverse of $\T$ (which is a self-adjoint linear operator). 
  
  Under the assumption that $\mu$ is supported on $[0,\infty)$, the left-hand side of~\eqref{eq:qform} turns out to be non-negative by Theorem~\ref{th:TsimM}. 
  Now if the measure $\dip$ would not vanish on $(0,L)$, then we could find an $\f\in\T$ such that $\tau f$ has compact support in $[0,L)$ and the second integral in~\eqref{eq:qform} is non-zero. 
  Upon rescaling the second component of $\tau f$ in a suitable way, the right-hand side of~\eqref{eq:qform} would become negative, giving a contradiction. 
  Moreover, non-negativity of the right-hand side of~\eqref{eq:qform} implies that $\omega$ is a non-negative distribution on $(0,L)$. 
  In this respect one should also mention that every non-negative smooth function with compact support has a square root in $\Hasto$; see \cite[Proposition~2.1]{alkrlomi98}. 
  As a consequence, it is known \cite[Theorem~6.22]{lilo01} that $\omega$ can be represented by a non-negative Borel measure on $(0,L)$ which shows that $\Wr$ has a non-decreasing representative. 
  
   In order to prove the converse, we assume that $\dip$ vanishes on $(0,L)$ and that $\Wr$ has a non-decreasing representative.
   Since this representative is the distribution function of a non-negative Borel measure on $(0,L)$, we may integrate the first integral in~\eqref{eq:qform} by parts to see that $\spr{\tau f}{f}_{\cH}$ is non-negative for all $\f\in\T$ such that $\tau f$ has compact support in $[0,L)$.
   This readily extends to all $\f\in\T$ by continuity which guarantees that $\mu$ is supported on $[0,\infty)$ in view of Theorem~\ref{th:TsimM}. 
  \end{proof}
  
  In order to guarantee that $\omega$ arises from a non-negative Borel measure on $[0,L)$, that is, when $\Wr$ has a non-negative and non-decreasing representative, we need an additional growth restriction on the spectral measure $\mu$.     
 To this end, let us recall that a Herglotz--Nevanlinna function $m$ is called {\it a Stieltjes function} if the function $z\mapsto zm(z)$ is a Herglotz--Nevanlinna function as well \cite[Lemma S1.5.1]{kakr74a}.
 It is known that the spectral measure $\mu$ corresponding to such a function is supported on $[0,\infty)$ and satisfies a growth restriction to the extent that the integral 
 \begin{align}
  \int_{(0,\infty)} \frac{d\mu(\lambda)}{1+\lambda}
 \end{align}
 is finite. 
 In conjunction with Theorem~\ref{thmIP}, the following result immediately recovers the classical result of M.\ G.\ Krein \cite[Theorem 11.1]{kakr74} (see also \cite[Theorem 1.1]{kowa82}).
 
 \begin{proposition}\label{propWTStieltjes}
  The Weyl--Titchmarsh function $m$ is a Stieltjes function if and only if $\dip$ vanishes identically and $\omega$ is a non-negative Borel measure on $[0,L)$. 
  In this case, the function $m$ has the simplified integral representation 
  \begin{align}\label{eqnWTIntRepStieltjes}
   m(z) = \omega(\lbrace0\rbrace) - \frac{1}{Lz} + \int_{(0,\infty)} \frac{d\mu(\lambda)}{\lambda-z}, \quad z\in\C\backslash\R. 
  \end{align}
 \end{proposition}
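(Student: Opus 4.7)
The plan is to combine Lemmas~\ref{lemWTlinterm} and~\ref{lem:muPos} with a local analysis of the Weyl solution $\psi(z,\cdot)$ at $x=0$ and with the definition of Stieltjes as those $m\in\Nevan$ for which $zm(z)\in\Nevan$ as well. The key ingredient is obtained by passing to the limit $x\to 0^+$ in the quasi-derivative identity~\eqref{eqnDEThetaInt} for $\psi(z,\cdot)$ and dividing by $z\psi(z,0)$, which yields
\begin{align*}
  m(z) = z\,\dip(\{0\}) + \Wr(0+) + \frac{\psi'(z,0+)}{z\,\psi(z,0)}.
\end{align*}
I expect the last summand to tend to $0$ as $z\to-\infty$ along the real axis (it is, morally, the principal Weyl--Titchmarsh function of the string obtained after stripping the atoms of $\omega$ and $\dip$ at $0$); so when $\dip\equiv 0$, this gives $\lim_{z\to-\infty} m(z) = \Wr(0+)$.

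For the direction $(\Rightarrow)$, assume $m$ is Stieltjes. Then $zm(z)\in\Nevan$ excludes linear growth of $m$ at $\I\infty$, so $c_1=0$ in~\eqref{eqnWTmIntRep}; and the analyticity of $m$ on $\C\setminus[0,\infty)$ forces $\supp\,\mu\subseteq[0,\infty)$. Lemmas~\ref{lem:muPos} and~\ref{lemWTlinterm} then deliver $\dip\equiv 0$ and $\Wr$ non-decreasing. The displayed identity reduces to $m(z)=\Wr(0+)+\psi'(z,0+)/(z\psi(z,0))$, so $\Wr(0+)=\lim_{z\to-\infty}m(z)$, which is non-negative since for Stieltjes $m$ this limit equals the constant $\alpha\geq 0$ in the canonical representation $m(z)=\alpha+\int_{[0,\infty)}d\sigma(\lambda)/(\lambda-z)$. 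Hence $\Wr$ has a non-negative, non-decreasing representative, so $\omega$ is a non-negative Borel measure on $[0,L)$. The direction $(\Leftarrow)$ runs in reverse: given $\dip\equiv 0$ and $\omega$ a non-negative Borel measure on $[0,L)$, the same two lemmas give $\supp\,\mu\subseteq[0,\infty)$ and $c_1=0$; the displayed identity identifies the constant appearing after one rearranges~\eqref{eqnWTmIntRep} with $\omega(\{0\})\geq 0$, and the resulting representation is~\eqref{eqnWTIntRepStieltjes}, from which $m$ and $zm(z)$ are both visibly Herglotz--Nevanlinna.

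The hard part is justifying the asymptotic $\psi'(z,0+)/(z\psi(z,0))\to 0$ as $z\to-\infty$, i.e., that the ``interior'' Weyl--Titchmarsh function vanishes at $-\infty$. The cleanest route is to transfer, as in the proof of Theorem~\ref{thmIP}, to the trace-normed canonical system with Hamiltonian~\eqref{eqnHamequ} and invoke standard Weyl-disk estimates: the Weyl disk around $m(z)$ shrinks to $\{0\}$ as $z$ approaches the essential spectrum from outside of $[0,\infty)$. Alternatively, a direct Volterra iteration of the integral equation~\eqref{eqnDEThetaInt} on a small right-neighbourhood of $0$ gives the ratio estimate $O(|z|^{-1/2})$, which suffices.
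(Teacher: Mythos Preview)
Your decomposition $m(z)=z\,\dip(\{0\})+\Wr(0+)+\psi'(z,0+)/(z\psi(z,0))$ is correct once $\Wr(0+)$ exists, but the entire argument rests on the asymptotic $\psi'(z,0+)/(z\psi(z,0))\to 0$, and neither justification you offer holds up. The Weyl-disk claim is wrong as stated: the disks $D_s(z)$ shrink to $\{m(z)\}$ as $s\to\infty$ for \emph{fixed} $z$; nothing in that mechanism forces the limit point to be $0$ as $z\to-\infty$ (or $\I\infty$). The Volterra claim of $O(|z|^{-1/2})$ is unsubstantiated: iterating~\eqref{eqnDEThetaInt} on a fixed interval $[0,\varepsilon]$ produces bounds that grow with $|z|$, and for a general non-negative Borel measure $\omega$ no such decay rate is available without further structure. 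What you actually need is the classical Krein-string fact that the Weyl function of a string with no point mass at the left endpoint tends to zero at $\I\infty$; this is true but not a two-line estimate (the paper itself cites \cite[\S 11.3]{kakr74} for the closely related limit~\eqref{eqnWTStieltjeshe}), and in your $(\Leftarrow)$ direction invoking it for the ``stripped'' string is precisely assuming the Stieltjes growth bound $\int d\mu/(1+\lambda)<\infty$ that you are trying to establish.

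The paper avoids this asymptotic in the $(\Leftarrow)$ direction by proving $z\mapsto zm(z)\in\Nevan$ directly: an integration by parts parallel to Lemma~\ref{lemHNm} yields
\[
\frac{zm(z)-z^\ast m(z)^\ast}{z-z^\ast}=\int_{[0,L)}|\psi(z,t)|^2\,d\omega(t)\ \geq\ 0,
\]
after the boundary term is shown to vanish as in the proof of Theorem~\ref{thmTsa}; this already delivers the Stieltjes property and the growth bound, and only then is the classical asymptotic cited to identify the constant as $\omega(\{0\})$. For $(\Rightarrow)$ the paper likewise bypasses your asymptotic: after Lemmas~\ref{lemWTlinterm} and~\ref{lem:muPos} give $\dip\equiv 0$ and $\Wr$ non-decreasing, the remaining point $\Wr(0+)\geq 0$ is first settled for rational $m$ (finite-dimensionality of $\mul{\T}^\perp$ forces $\Wr$ to be a step function, so one can feed back into the already-proved $(\Leftarrow)$ direction after a constant shift) and then extended to general $m$ by approximation with rational Stieltjes functions via Proposition~\ref{propSMPcont}.
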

 
\begin{proof}
 If $\dip$ vanishes identically and $\omega$ is a non-negative Borel measure on $[0,L)$, then an integration by parts shows (here we use $\psi'(z,\redot)$ to denote the unique left-continuous representative of the derivative of the solution $\psi(z,\redot)$ given by~\eqref{eq:m_pm}, which exists in this case) 
 \begin{align}\begin{split}\label{eqnWTmStieltjes}
  \frac{zm(z)-z^\ast m(z)^\ast}{z-z^\ast} &= \frac{\psi'(z,x)\psi(z,x)^* - \psi(z,x)\psi'(z,x)^*}{z-z^\ast} \\
  & \qquad\qquad +\int_{[0,x)} |\psi(z,t)|^2 d\omega(t), \quad z\in\C\backslash\R, 
 \end{split}\end{align}
 for every $x\in[0,L)$. 
Arguing as in the first part of the proof of Theorem~\ref{thmTsa}, we can show that the first term on the right-hand side of~\eqref{eqnWTmStieltjes} converges to zero along some increasing sequence $x_n\rightarrow L$. 
 In particular, this implies that $\psi(z,\redot)$ is square integrable with respect to $\omega$ and in turn that the first term on the right-hand side of~\eqref{eqnWTmStieltjes} actually converges to zero as $x$ tends to $L$. 
 Since $\omega$ is a non-negative Borel measure, this guarantees that $z\mapsto zm(z)$ is a Herglotz--Nevanlinna function as well, which means that $m$ is a Stieltjes function. 
 As such, it has an integral representation of the form~\eqref{eqnWTIntRepStieltjes} and it remains to note that 
 \begin{align}\label{eqnWTStieltjeshe}
  \lim_{\eta\rightarrow\infty} m(\I\eta) = \omega(\lbrace0\rbrace),
 \end{align}
 which holds since  the coefficient of the linear term in the integral representation of $z\mapsto zm(z)$ is given by $\omega(\lbrace0\rbrace)$ in view of \cite[\S 11.3]{kakr74} (see also \cite[Corollary~10.8]{MeasureSL}).
 
 In order to prove the converse, assume that $m$ is a Stieltjes function. 
 Then \cite[Theorem~S1.5.1]{kakr74a} implies that the coefficient of the linear term in~\eqref{eqnWTmIntRep} is zero and that the spectral measure $\mu$ is supported on $[0,\infty)$. 
 In view of Lemma~\ref{lemWTlinterm} and Lemma~\ref{lem:muPos}, this guarantees that $\dip$ vanishes identically and $\Wr$ has a non-decreasing representative.  
 Thus we are left to show that $\Wr$ is non-negative almost everywhere on $[0,L)$. 
 In order to prove this we first assume that the Weyl--Titchmarsh function $m$ is rational and show that $\Wr$ is almost everywhere equal to a step function with only a finite number of jumps. 
 To this end, let $x\in[0,L)$ be a point of increase for the non-decreasing representative of $\Wr$ and note that we have  
 \begin{align*}
  \spr{g}{\delta_x}_{\cH} = g_1(x) = 0, \quad g\in\mul{\T}, 
 \end{align*}
 since otherwise we could find an $h\in H^1_{\cc}[0,L)$ such that $\omega(g_1 h)\not=0$ which contradicts the fact that $g$ belongs to the multi-valued part of $\T$ given by (cf.\ \cite[Proposition~2.8]{bebrwe09}, \cite[Lemma~6.2]{LeftDefiniteSL}, \cite[Equation~(2.12)]{CHPencil})
 \begin{align*}
  \mul{\T} = \lbrace g\in\cH \,|\, \omega g_1 = 0 \rbrace. 
 \end{align*} 
 In particular, this shows that there can only be finitely many such points since the orthogonal complement of $\mul{\T}$ is finite dimensional by Theorem~\ref{th:TsimM}. 
 This guarantees that $\omega$ is a Borel measure on $[0,L)$, non-negative on $(0,L)$ but with a possible negative point mass at zero. 
 Upon noting that adding a real-valued constant to $\omega(\lbrace0\rbrace)$ amounts to adding the same constant to $m$, we infer from the first part of the proof that~\eqref{eqnWTStieltjeshe} holds. 
 Since $m$ is a Stieltjes function, this implies that $\omega(\lbrace0\rbrace)$ is non-negative and thus $\Wr$ is non-negative almost everywhere. 
 In the general case, we can approximate $m$ locally uniformly with rational Stieltjes functions and the claim follows from the first limit in~\eqref{eqnSMPhomeo} of Proposition~\ref{propSMPcont}. 
\end{proof}

We are also able to recover a continuity result by Y.\ Kasahara \cite{ka75} (see also \cite{ko07}) for this subclass of strings in a slightly different dressing from Proposition~\ref{propSMPcont}. 

\begin{corollary}
 Suppose that the Weyl--Titchmarsh functions $m$ and $m_n$ are Stieltjes functions for every $n\in\N$. 
 Then the functions $m_n$ converge locally uniformly to $m$ if and only if~\eqref{eqnLiminfL} holds for every subsequence $n_k$ and 
 \begin{align}\label{eq:weakW}
   \lim_{n\rightarrow\infty}  \Wr_n(x) = \Wr(x)
 \end{align}
for almost all $x\in[0,L)$.
\end{corollary}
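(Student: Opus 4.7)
The plan is to deduce both directions from Proposition~\ref{propSMPcont} (together with Remark~\ref{rem:7.3}), exploiting the extra rigidity available in the Stieltjes case from Proposition~\ref{propWTStieltjes}: since $m$ and each $m_n$ is Stieltjes, the measures $\dip$ and $\dip_n$ vanish identically and the functions $\Wr,\Wr_n$ may be taken to be non-negative and non-decreasing on their respective intervals, so that
\begin{align*}
 \varsigma_n(x) = x + \int_0^x \Wr_n(t)^2\,dt,\qquad \varsigma(x) = x + \int_0^x \Wr(t)^2\,dt.
\end{align*}
The whole content of the corollary is that in this setting the technical conditions~\eqref{eqnSMPhomeo} of Proposition~\ref{propSMPcont} collapse to the single requirement~\eqref{eq:weakW}.

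For the forward direction, assume $m_n\to m$ locally uniformly. By Proposition~\ref{propSMPcont}, both~\eqref{eqnLiminfL} and the two limits in~\eqref{eqnSMPhomeo} hold; in particular $F_n(x):=\int_0^x \Wr_n(t)\,dt$ converges to $F(x):=\int_0^x \Wr(t)\,dt$ locally uniformly on $[0,L)$. Since $\Wr_n$ and $\Wr$ are non-decreasing, $F_n$ and $F$ are convex on $[0,L)$. A standard fact about convex functions states that pointwise convergence of convex functions on an open interval implies convergence of the derivatives at every point of differentiability of the limit. Since $F$ is differentiable outside an at most countable set, and $F' = \Wr$ almost everywhere (and similarly for $F_n$), this yields~\eqref{eq:weakW}.

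For the reverse direction, assume~\eqref{eqnLiminfL} for every subsequence and~\eqref{eq:weakW} almost everywhere. By Remark~\ref{rem:7.3}, it suffices to verify~\eqref{eqnSMPhomeo} pointwise. Fix $x\in[0,L)$; by~\eqref{eqnLiminfL} applied to the full sequence, we can pick $y\in(x,L)$ such that $\limsup_{n\to\infty}\varsigma_n(y)<\infty$. Because $\Wr_n$ is non-decreasing,
\begin{align*}
 (y-x)\Wr_n(x)^2 \leq \int_x^y \Wr_n(t)^2\,dt \leq \varsigma_n(y),
\end{align*}
so $\Wr_n$ is uniformly bounded on $[0,x]$ for all large $n$. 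Combined with~\eqref{eq:weakW}, the dominated convergence theorem gives $\int_0^x \Wr_n(t)\,dt\to\int_0^x\Wr(t)\,dt$ and $\varsigma_n(x)\to\varsigma(x)$ pointwise, and a further application of the dominated convergence theorem (using that $\varsigma_n$ is uniformly bounded on $[0,x]$ by the same argument applied to a larger $y$) yields $\int_0^x\varsigma_n(t)\,dt\to\int_0^x\varsigma(t)\,dt$. An appeal to Proposition~\ref{propSMPcont} via Remark~\ref{rem:7.3} completes the proof.

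The main subtlety is the forward direction: one must extract almost-everywhere pointwise convergence of $\Wr_n$ from mere integral convergence, and this is exactly where convexity (equivalently, the monotonicity of $\Wr_n$ and $\Wr$) is essential—for general indefinite $\omega$ the analogue fails. All other steps reduce to routine dominated-convergence arguments once the uniform local bounds on $\Wr_n$ are obtained from~\eqref{eqnLiminfL} combined with monotonicity.
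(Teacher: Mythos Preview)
Your argument is correct and essentially self-contained, whereas the paper's own proof is a one-line appeal to Proposition~\ref{propSMPcont} together with \cite[Theorem~B]{ka12}. Both routes start from Proposition~\ref{propSMPcont}; the difference is that you supply the bridge between the integral conditions~\eqref{eqnSMPhomeo} and the pointwise condition~\eqref{eq:weakW} by hand, while the paper outsources exactly this equivalence to Kasahara's result. Your forward direction identifies the key mechanism explicitly: the primitives $F_n(x)=\int_0^x\Wr_n$ are convex (since $\Wr_n$ is non-decreasing), and pointwise convergence of convex functions forces convergence of one-sided derivatives at every point of differentiability of the limit, hence $\Wr_n\to\Wr$ almost everywhere. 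Your reverse direction extracts the needed local uniform bounds on $\Wr_n$ from monotonicity and~\eqref{eqnLiminfL}, then closes with dominated convergence. The payoff of your approach is that it makes the corollary independent of the external reference and isolates precisely where the Stieltjes hypothesis (non-negativity and monotonicity of $\Wr_n$) is used; the paper's proof is shorter but opaque without access to \cite{ka12}. A small point worth tidying: in the forward step you might note that the ``standard fact'' is applied on the open interval $(0,L)$ and that the countable exceptional set for differentiability of $F$, together with the null sets where $\Wr_n\neq F_n'$, still leaves a set of full measure on which the conclusion holds.
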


\begin{proof}
 The claim follows from Proposition~\ref{propSMPcont} and \cite[Theorem~B]{ka12}.
\end{proof}

\appendix

\section{Canonical first order systems}\label{sec:cansys}

 The purpose of the present appendix is to briefly review some facts about canonical systems as far as they are needed in this article. 
 In particular, we will state the solution of the corresponding inverse spectral problem which is due to L.\ de Branges. 
 For more details  we refer the reader to \cite{dB60, dB61a, dB61b, dB62, dB68, ka83, lema, ro14, wi95, wi14}. 

 In order to set the stage, let $H$ be a locally integrable, real, symmetric and non-negative definite $2\times2$ matrix function on $[0,\infty)$. 
 Furthermore, we shall assume that $H$ is trace normed, that is, 
 \begin{align}\label{eq:II.03}
  \tr\, H(s) = H_{11}(s)+ H_{22}(s)=1, \quad s\in[0,\infty),  
 \end{align}
 and also exclude the cases when 
 \begin{align}
  H(s) = \begin{pmatrix} 1 & 0\\ 0 & 0 \end{pmatrix}
 \end{align}
 for almost all $s\in[0,\infty)$. 
 A matrix function $H$ with all these properties is called a Hamiltonian and associated with such a function is the canonical first order system  
 \begin{align}\label{eq:II.01}
  \begin{pmatrix} 0 & 1\\ -1 & 0 \end{pmatrix} F' = z H F,
 \end{align} 
 where $z$ is a complex spectral parameter. 
 

 Let us introduce the fundamental matrix solution $U$ of the canonical system~\eqref{eq:II.01} as the unique solution of the integral equation 
 \begin{align}
  U(z,s) = \begin{pmatrix} 1 & 0 \\ 0 & 1 \end{pmatrix} - z\int_0^s \begin{pmatrix} 0 & 1 \\ -1 & 0 \end{pmatrix} H(t) U(z,t)dt, \quad s\in[0,\infty),~z\in\C. 
 \end{align} 
 The Weyl--Titchmarsh function $m$ of the canonical system~\eqref{eq:II.01} is now defined by 
 \begin{align}
  m(z)=  \lim_{s\rightarrow\infty} \frac{U_{11}(z,s)}{U_{12}(z,s)}, \quad z\in \C\backslash\R.
 \end{align}
 As a Herglotz--Nevanlinna function, the Weyl--Titchmarsh function $m$ clearly has an integral representation \cite{kakr74a}, \cite[Section~5.3]{roro94} of the form  
 \begin{align}
  m(z) = c_1 z + c_2 +  \int_\R \frac{1}{\lambda-z} - \frac{\lambda}{1+\lambda^2} d\nu(\lambda), \quad z\in\C\backslash\R, 
 \end{align}
 for some constants $c_1$, $c_2\in\R$ with $c_1\geq0$ and a non-negative Borel measure $\nu$ on $\R$ for which the integral 
 \begin{align}
  \int_{\R}\frac{d\nu(\lambda)}{1+\lambda^2}
 \end{align}
 is finite. 
 The coefficient $c_1$ of the linear term can be read off the Hamiltonian $H$ immediately as the following fact shows; see \cite{dB61a}, \cite[Lemma~3.1]{wi95}, \cite[Lemma~2.5]{wi14}. 

\begin{lemma}\label{lem:c_1}
The Weyl--Titchmarsh function $m$ has the asymptotics  
\begin{align}
  \lim_{\eta\rightarrow\infty} \frac{m(\I\eta)}{\I\eta}=\sup\left\lbrace s\in[0,\infty) \,\left|\, H(t)=\begin{pmatrix} 1 & 0 \\ 0 & 0 \end{pmatrix} \text{ for almost all }t\in[0,x)\right.\right\rbrace. 
\end{align}
\end{lemma}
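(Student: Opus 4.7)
The plan is a two-step reduction: first peel off the initial $e_1$-indivisible interval $[0,L_0)$ (where $L_0$ denotes the supremum on the right-hand side of the claim), and then show that the linear Herglotz--Nevanlinna coefficient of the Weyl--Titchmarsh function of the shifted Hamiltonian vanishes. The standing hypothesis on $H$ guarantees $L_0<\infty$.

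On $[0,L_0)$ we have $H\equiv\begin{pmatrix}1&0\\0&0\end{pmatrix}$, so the canonical ODE reduces to $U_{11}'=U_{12}'=0$, $U_{21}'=zU_{11}$, $U_{22}'=zU_{12}$. Together with $U(z,0)=I$ this yields the explicit transfer matrix
\begin{equation*}
T(z):=U(z,L_0)=\begin{pmatrix}1&0\\ zL_0&1\end{pmatrix}.
\end{equation*}
Setting $\tilde H(\tau)=H(L_0+\tau)$ and letting $V(z,\tau)$ and $\tilde m$ denote the corresponding fundamental matrix and Weyl--Titchmarsh function, uniqueness for the IVP yields $U(z,L_0+\tau)=V(z,\tau)T(z)$, from which a direct computation gives
\begin{equation*}
\frac{U_{11}(z,L_0+\tau)}{U_{12}(z,L_0+\tau)}=\frac{V_{11}(z,\tau)}{V_{12}(z,\tau)}+zL_0\,\xrightarrow[\tau\to\infty]{}\,\tilde m(z)+zL_0.
\end{equation*}
Hence $m(z)=\tilde m(z)+zL_0$, so $m(i\eta)/(i\eta)=\tilde m(i\eta)/(i\eta)+L_0$. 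Applying dominated convergence to the integral term in the Herglotz--Nevanlinna representation of $\tilde m$ (the integrand for $z=i\eta$ is bounded by a multiple of $(1+\lambda^2)^{-1}$), one obtains $\lim_{\eta\to\infty}\tilde m(i\eta)/(i\eta)=\tilde c_1$, the coefficient of the linear term in the representation of $\tilde m$. The assertion is therefore reduced to showing $\tilde c_1=0$.

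By the maximality of $L_0$, the shifted Hamiltonian $\tilde H$ satisfies $\int_0^\epsilon\tilde H_{22}(t)\,dt>0$ for every $\epsilon>0$ (an initial $e_1$-indivisible interval of $\tilde H$ would extend the one for $H$ at $0$). The heart of the proof---and the main obstacle---is converting this qualitative absence of an $e_1$-indivisible interval at $0$ into the quantitative statement $\tilde c_1=0$. I would use the Weyl-disk description: for each $s>0$, $\tilde m(z)$ lies in a closed disk $D(z,s)\subset\overline{\C_+}$ whose center $w_c(z,s)$ and radius $r(z,s)$ are explicit rational expressions in the entries of $V(z,s)$. The standard Green-type identity
\begin{equation*}
(F_1^\ast J F_2)'=(z_2-z_1^\ast)F_1^\ast\tilde H F_2
\end{equation*}
gives $r(i\eta,s)=(2\eta\int_0^s\Psi_0(i\eta,t)^\ast \tilde H(t)\Psi_0(i\eta,t)\,dt)^{-1}$ with $\Psi_0(z,t)=V(z,t)\bigl(\begin{smallmatrix}0\\1\end{smallmatrix}\bigr)$, and an analogous formula for $w_c$. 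Expanding $V(i\eta,t)=I-i\eta J\int_0^t\tilde H\,d\sigma+O(\eta^2)$ for small $t$ and inserting this into $w_c$ and $r$ produces, after a careful estimation, an upper bound of the form $|\im\tilde m(i\eta)|/\eta\le\int_0^s\tilde H_{22}(t)\,dt+o_\eta(1)$, valid for every fixed small $s>0$. Letting $\eta\to\infty$ and then $s\to 0^+$ (using local integrability of $\tilde H_{22}$) yields $\tilde c_1=0$, as required.

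The technically delicate step is verifying this one-sided estimate on $|\im\tilde m(i\eta)|$: one must track the Möbius geometry of the Weyl disk under the small-$s$, large-$\eta$ expansion and show that, in the absence of an $e_1$-indivisible interval, the $\tilde H_{22}$-mass accumulated near $0$ is sufficient to force the disk to collapse in the imaginary direction at order $o(\eta)$ rather than $\Theta(\eta)$. This is where the bulk of the real work lies; the rest of the argument is essentially an explicit ODE computation followed by the Herglotz dominated-convergence trick.
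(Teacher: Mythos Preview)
The paper does not prove this lemma; it simply cites it from the literature (de~Branges 1961 and Winkler 1995/2014). So there is no ``paper's own proof'' to compare against, and I can only assess your argument on its own merits.

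Your reduction is clean and correct: the explicit transfer matrix across $[0,L_0)$, the factorization $U(z,L_0+\tau)=V(z,\tau)T(z)$, the resulting identity $m(z)=\tilde m(z)+zL_0$, and the dominated-convergence computation $\lim_{\eta\to\infty}\tilde m(\I\eta)/(\I\eta)=\tilde c_1$ are all fine. This already gives the inequality $c_1\ge L_0$ (since $\tilde c_1\ge 0$), and correctly isolates the real content of the lemma as the converse inequality $\tilde c_1=0$.

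The gap is in your proposed attack on $\tilde c_1=0$. The expansion $V(\I\eta,t)=I-\I\eta J\int_0^t\tilde H+O(\eta^2)$ is only valid when $\eta t$ is small; for fixed $s>0$ and $\eta\to\infty$ the matrix $V(\I\eta,s)$ grows like $e^{C\eta s}$, and no finite Taylor expansion in $\eta$ survives. Your target estimate $\Im\tilde m(\I\eta)/\eta\le\int_0^s\tilde H_{22}+o_\eta(1)$ therefore cannot be obtained in the way you sketch. One can push a Weyl-disk argument through, but it requires the exact nesting formulas rather than a perturbative expansion of $V$, and the bound one actually gets is not of the form you wrote.

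Given that the paper already invokes the full de~Branges correspondence (Theorem~\ref{th:dB01}), there is a much shorter route to $\tilde c_1=0$ which fits better here. Suppose $\tilde c_1>0$. Then $\tilde m(z)-\tilde c_1 z$ is again a Herglotz--Nevanlinna function, so by the existence part of Theorem~\ref{th:dB01} there is a Hamiltonian $\hat H$ with Weyl--Titchmarsh function $\tilde m-\tilde c_1 z$. Now form the Hamiltonian $H^\sharp$ equal to $\left(\begin{smallmatrix}1&0\\0&0\end{smallmatrix}\right)$ on $[0,\tilde c_1)$ and to $\hat H(\cdot-\tilde c_1)$ on $[\tilde c_1,\infty)$. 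Your own step~3 computation gives $m^\sharp(z)=(\tilde m(z)-\tilde c_1 z)+\tilde c_1 z=\tilde m(z)$, and the uniqueness part of Theorem~\ref{th:dB01} forces $H^\sharp=\tilde H$ almost everywhere. But $H^\sharp$ has an $e_1$-indivisible interval $[0,\tilde c_1)$ at the origin while $\tilde H$ does not, a contradiction. This closes the argument without any asymptotic estimate.
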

 
 It is a fundamental result of L.\ de Branges \cite{dB60, dB61a, dB61b, dB62, dB68} (see also \cite[Theorem~1]{wi95}, \cite[Theorem~2.4]{wi14}) that indeed all Herglotz--Nevanlinna functions arise as the Weyl--Titchmarsh function of a unique canonical system~\eqref{eq:II.01}.

\begin{theorem}\label{th:dB01}
 For every Herglotz--Nevanlinna function $m$ there is a Hamiltonian $H$ such that $m$ is the Weyl--Titchmarsh function of the canonical system \eqref{eq:II.01}. 
 Upon identifying Hamiltonians which coincide almost everywhere on $[0,\infty)$, this correspondence is also one-to-one.   
\end{theorem}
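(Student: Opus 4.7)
The plan is to prove both halves of the correspondence via L.\ de Branges' theory of Hilbert spaces of entire functions. The existence part constructs a Hamiltonian $H$ by extracting it from a distinguished totally ordered family of de Branges subspaces sitting inside a suitable model space attached to $m$, while the uniqueness part identifies this family as an invariant of $m$.

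\textbf{Existence.} Given a Herglotz--Nevanlinna function $m$ with integral representation involving a non-negative measure $\nu$ on $\R$, I would first reduce to the case $c_1=0$: any positive linear coefficient can be absorbed by prepending to $H$ a block on which $H(s)=\mathrm{diag}(1,0)$ of length $c_1$, since by Lemma~\ref{lem:c_1} this precisely shifts $m$ by a linear term. Working in the model Hilbert space $L^2(\R;\nu)$ with multiplication operator $\mathrm{M}_\mathrm{id}$, the elements $k_z(\lambda)=(\lambda-z)^{-1}$ serve as reproducing-kernel type objects, and the scalar product identity
\begin{equation*}
\frac{m(z)-m(z^\ast)}{z-z^\ast} = \|k_z\|_{L^2(\nu)}^2
\end{equation*}
recovers $m$ from the model. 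The construction of $H$ proceeds by exhibiting a chain $\{\mathcal{B}_s\}_{s\in[0,\infty)}$ of de Branges subspaces of $L^2(\nu)$, each of the form $\mathcal{H}(E_s)$ for some Hermite--Biehler entire function $E_s = A_s - \I B_s$. De Branges' ordering theorem guarantees that any two isometrically embedded de Branges subspaces of $L^2(\nu)$ are comparable with respect to inclusion, and the chain can be continuously reparametrized so that $s$ equals the accumulated trace; this is precisely the normalization~\eqref{eq:II.03}. Assembling the normalized entire functions $A_s, B_s$ into a $2\times 2$ transfer matrix $U(\cdot,s)$, one shows (via standard reproducing-kernel computations for de Branges spaces) that $s\mapsto U(z,s)$ is locally absolutely continuous and satisfies the integral equation preceding the lemma statement with a locally integrable, trace-normed, real, symmetric, non-negative Hamiltonian $H$; the exclusion of $H(s)\equiv\mathrm{diag}(1,0)$ follows because the chain is non-degenerate whenever $\nu\not=0$. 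Finally, computing the Weyl--Titchmarsh function of this canonical system reduces to an asymptotic analysis of $U_{11}(z,s)/U_{12}(z,s)$ as $s\to\infty$, which by the exhaustion $\bigcup_s \mathcal{B}_s$ dense in $L^2(\nu)$ yields back the original $m$.

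\textbf{Uniqueness.} Suppose two Hamiltonians $H$ and $\widetilde H$ produce the same Weyl--Titchmarsh function $m$. Each gives rise to its own chain of de Branges subspaces inside the common model $L^2(\nu)$ determined by $m$, with parameter $s$ again equal to the trace. The ordering theorem forces both chains to consist of the same family of subspaces, and the trace parametrization is intrinsic to $m$, so the two chains coincide pointwise in $s$. The Hamiltonian is recovered from the chain by differentiation of the matrix of reproducing kernel data, so $H(s)=\widetilde H(s)$ for almost every $s\in[0,\infty)$, which is exactly the claimed identification up to null sets.

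\textbf{Main obstacle.} The technical heart of the argument is de Branges' ordering theorem for de Branges subspaces of a common model, together with the regularity (local absolute continuity in $s$) of the associated transfer matrix and the verification that the canonical system it satisfies has trace-normed coefficients. These are classical but delicate facts whose genuine proofs occupy much of de Branges' monograph; in our exposition we invoke them as black boxes and focus on the bookkeeping that converts the chain into the concrete Hamiltonian on $[0,\infty)$ required by Theorem~\ref{th:dB01}.
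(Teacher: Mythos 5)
The paper does not prove this statement. Theorem~\ref{th:dB01} appears in the appendix as a known result of L.~de Branges, stated with citations to \cite{dB60, dB61a, dB61b, dB62, dB68} and the expositions \cite{wi95, wi14}, and the appendix is explicitly a review of facts used as input elsewhere. So there is no in-paper argument to compare yours against; the relevant comparison is with those references, and your outline does follow their standard route (a totally ordered chain of de Branges subspaces isometrically contained in $L^2(\R;\nu)$, reparametrized by accumulated trace, with the ordering theorem driving uniqueness). As you acknowledge, all the substantive content --- the ordering theorem, the existence and regularity of the chain, the derivation of the canonical system satisfied by the transfer matrix --- is invoked as a black box, so what you have is an accurate table of contents for the known proof rather than a proof.

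Within the part you do claim to handle (the ``bookkeeping''), two points are genuinely incomplete. First, the identity $\frac{m(z)-m(z^\ast)}{z-z^\ast}=\|k_z\|^2_{L^2(\nu)}$ recovers only $\im\, m$, i.e.\ the data $(c_1,\nu)$; the additive constant $c_2$ in the integral representation is invisible in the model space $L^2(\R;\nu)$, yet different values of $c_2$ correspond to genuinely different Hamiltonians. Recovering $c_2$ (for existence) and ruling out a discrepancy in it (for uniqueness) requires fixing the normalization of the pair $(A_s,B_s)$ within each de Branges subspace, consistently along the chain and compatibly with the initial condition $U(z,0)=I$; this step should be made explicit rather than subsumed under ``the chain is intrinsic to $m$''. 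Second, the degenerate case $\nu=0$, $c_1=0$, where $m$ is a real constant, is not covered by your remark that the chain is non-degenerate whenever $\nu\neq0$; there the Hamiltonian is a single indivisible interval of the appropriate type and must be produced separately.
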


 The next fact establishes a continuity property for L.\ de Branges' correspondence in Theorem~\ref{th:dB01}; see \cite{dB61a}, \cite[Proposition~3.2]{lawi98}. 
 In order to state it, let $H_n$ be a Hamiltonian for every $n\in\N$ and denote with $m_n$ the corresponding Weyl--Titchmarsh function. 

\begin{proposition}\label{prop:dB_corresp}
  The Weyl--Titchmarsh functions $m_n$ converge locally uniformly to $m$ if and only if  
   \begin{align}\label{eqnHamconv}
     \lim_{n\rightarrow\infty} \int_0^x H_n(t)dt & = \int_0^x H(t)dt
   \end{align}
 locally uniformly for all $x\in[0,\infty)$. 
 Moreover, the functions $m_n$ converge locally uniformly to $\infty$ if and only if 
    \begin{align}\label{eqnHamconvinf}
     \lim_{n\rightarrow\infty} \int_0^x H_n(t)dt & =  \int_0^x \begin{pmatrix} 1 & 0 \\ 0 & 0 \end{pmatrix} dt 
   \end{align}
    locally uniformly for all $x\in[0,\infty)$. 
\end{proposition}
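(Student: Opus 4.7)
The plan is to combine a compactness argument on both the Hamiltonian and spectral sides with the uniqueness part of Theorem~\ref{th:dB01}. The technical core is a continuous-dependence lemma: if the Hamiltonian integrals converge locally uniformly to those of a \emph{non-degenerate} limit $\tilde H$, then the associated Weyl--Titchmarsh functions converge locally uniformly to $\tilde m$. Assuming this lemma, both equivalences follow by a standard subsequence-extraction argument.

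First I would set up the compactness framework. On the spectral side, the fundamental normality test (as invoked in the proof of Lemma~\ref{lemWTasQ}) guarantees that every subsequence of $\lbrace m_n\rbrace$ has a further subsequence converging locally uniformly on $\C\backslash\R$ either to a Herglotz--Nevanlinna function or to $\infty$. On the Hamiltonian side, the trace-norming condition $\tr H_n = 1$ together with the inequality $|H_{n,12}| \leq \sqrt{H_{n,11}H_{n,22}}\leq 1/2$ (from non-negative definiteness) forces the entries of $x\mapsto \int_0^x H_n(t)dt$ to be $1$-Lipschitz on $[0,\infty)$, uniformly in $n$. Arzel\`a--Ascoli then yields that every subsequence has a further subsequence for which $\int_0^x H_{n_k}(t)dt$ converges locally uniformly on $[0,\infty)$ to some matrix function whose derivative $\tilde H$ is real, symmetric, non-negative definite and trace normed almost everywhere; either $\tilde H$ is itself a Hamiltonian in the admissible class, or $\tilde H(s) = \begin{pmatrix} 1 & 0\\ 0 & 0\end{pmatrix}$ for almost all $s\in[0,\infty)$.

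The main analytic step is to prove the continuous-dependence lemma. Starting from the integral equation
\begin{align*}
U_n(z,s) = I - z\int_0^s \begin{pmatrix} 0 & 1\\ -1 & 0\end{pmatrix} H_n(t) U_n(z,t)dt,
\end{align*}
an integration by parts rewrites the integral in terms of $\int_0^t H_n$ and $U_n'$, after which a Gronwall estimate (valid because $\|\int_0^s H_n(t)dt\|$ is bounded by $s$ uniformly in $n$) shows that $U_{n_k}(z,s) \to \tilde U(z,s)$ locally uniformly in $(z,s)\in\C\times[0,\infty)$ whenever $\int_0^x H_{n_k}(t)dt \to \int_0^x \tilde H(t)dt$ locally uniformly. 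When $\tilde H$ is a genuine Hamiltonian, the Weyl disks at cutoff $s$ for $U_{n_k}$ converge to those for $\tilde U$; since their radii shrink to zero as $s\to\infty$ uniformly in a compact $z$-set (using again the uniform trace-norm bound and a diagonal subsequence trick), one concludes $m_{n_k}(z) \to \tilde m(z)$ locally uniformly on $\C\backslash\R$. In the degenerate case $\tilde H = \begin{pmatrix} 1 & 0\\ 0 & 0\end{pmatrix}$ a.e., a direct computation gives $\tilde U_{11}(z,s) = 1$ and $\tilde U_{12}(z,s) = -zs$; the same Weyl-disk shrinkage then forces $m_{n_k}(z)$ to be asymptotically trapped near $-1/(zs)$ for every $s$, hence $m_{n_k}\to\infty$ locally uniformly.

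Granted the lemma, the two equivalences follow rapidly. Suppose $m_n \to m$ locally uniformly. If the Hamiltonian integrals did not converge to $\int_0^x H(t)dt$ locally uniformly, pick a subsequence and extract, by Arzel\`a--Ascoli, a further subsequence along which $\int_0^x H_{n_k}(t)dt$ converges to the integral of some $\tilde H \neq H$ (in $L^1_{\loc}$-sense). The lemma gives $m_{n_k} \to \tilde m$ or $m_{n_k}\to\infty$; in the first case, the uniqueness in Theorem~\ref{th:dB01} forces $\tilde H = H$, a contradiction, and the second case contradicts $m_n \to m$. The converse direction is immediate from the lemma and uniqueness, and exactly the same argument handles the case $m_n \to \infty$, with the degenerate $\tilde H$ playing the role of the limit. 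The main obstacle is the Gronwall-plus-Weyl-disk argument underlying the continuous-dependence lemma, specifically the uniform-in-$n$ control on the Weyl-disk radii that legitimizes interchanging the limits $s\to\infty$ and $n\to\infty$.
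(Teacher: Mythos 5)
You should first be aware that the paper does not prove this proposition at all: it is quoted from the literature (de Branges and \cite[Proposition~3.2]{lawi98}), so there is no in-paper argument to compare against. Your overall architecture --- normality of the family $\lbrace m_n\rbrace$ on the spectral side, Arzel\`a--Ascoli for the $1$-Lipschitz functions $x\mapsto\int_0^x H_n$ on the coefficient side, a continuous-dependence lemma for the fundamental solutions, and the subsequence-plus-uniqueness argument invoking Theorem~\ref{th:dB01} --- is exactly the standard route taken in those sources, and the non-degenerate half of your continuous-dependence lemma (Gronwall for $U_{n_k}\to\tilde U$, nested Weyl disks whose radii shrink because every admissible trace-normed Hamiltonian on $[0,\infty)$ is in the limit point case, then an upgrade from pointwise to locally uniform convergence via normality) is sound.

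The genuine gap is the degenerate case, and it matters because the subsequence-extraction step in \emph{both} "only if" directions must be able to dispose of limit points $\tilde H=\left(\begin{smallmatrix}1&0\\0&0\end{smallmatrix}\right)$. First, your computation is for the wrong degenerate Hamiltonian: with $H\equiv\left(\begin{smallmatrix}1&0\\0&0\end{smallmatrix}\right)$ the integral equation gives $\tilde U(z,s)=\left(\begin{smallmatrix}1&0\\zs&1\end{smallmatrix}\right)$, i.e.\ $\tilde U_{12}=0$ and $\tilde U_{21}=zs$; the solution $\tilde U_{11}=1$, $\tilde U_{12}=-zs$ that you wrote down belongs to $\left(\begin{smallmatrix}0&0\\0&1\end{smallmatrix}\right)$, and being ``trapped near $-1/(zs)$ for every $s$'' would force $m_{n_k}\to0$, not $\infty$. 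Second, and more substantively, ``the same Weyl-disk shrinkage'' is simply not available here: the radius of the disk at cutoff $s$ is governed by $\int_0^s\Phi^\ast H\Phi\,dt$ with $\Phi=(U_{12},U_{22})^{\T}$ the second column, and for the degenerate Hamiltonian $\Phi\equiv(0,1)^{\T}$ makes this integral vanish identically, so the nested sets are half-planes of infinite radius for every $s$. The correct mechanism is that these half-planes escape to infinity (they are essentially $\lbrace w : \mathrm{Im}\, w\geq s\,\mathrm{Im}\, z\rbrace$ in the limit), and one needs a separate argument --- for instance, the disk-membership inequality $\int_0^s(\Theta_n+m_n(z)\Phi_n)^\ast H_n(\Theta_n+m_n(z)\Phi_n)\,dt\leq \mathrm{Im}\,m_n(z)/\mathrm{Im}\,z$ combined with $\Theta_n(z,\cdot)\to(1,zs)^{\T}$ and $\Phi_n(z,\cdot)\to(0,1)^{\T}$, which shows that any bounded limit point of $m_{n_k}(z)$ would violate the inequality for large $s$ --- to conclude $m_{n_k}\to\infty$. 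With that repair the proof closes; without it, neither equivalence is fully established.
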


\begin{remark}\label{rem:dBth}
Note that the locally uniform convergence of~\eqref{eqnHamconv} and~\eqref{eqnHamconvinf} in Proposition~\ref{prop:dB_corresp} can be replaced by simple pointwise convergence upon employing a compactness argument. 
\end{remark}


\begin{thebibliography}{XX}

\bibitem{alkrlomi98}
D.\ Alekseevsky, A.\ Kriegl, M.\ Losik and P.\ W.\ Michor, {\em Choosing roots of polynomial smoothly}, Israel J.\ Math.\ {\bf 105} (1998), 203--233.

\bibitem{atmi87}
F.\ V.\ Atkinson and A.\ B.\ Mingarelli, {\em Asymptotics of the number of zeros and of the eigenvalues of general weighted Sturm--Liouville problems}, J.\ Reine Angew.\ Math.\ {\bf 375/376} (1987), 380--393.

\bibitem{besasz00}
R.\ Beals, D.\ H.\ Sattinger and J.\ Szmigielski, {\em Multipeakons and the classical moment problem}, Adv.\ Math.\ {\bf 154} (2000), no.~2, 229--257.

\bibitem{besasz01}
R.\ Beals, D.\ H.\ Sattinger and J.\ Szmigielski, {\em Peakon-antipeakon interaction}, J.\ Nonlinear Math.\ Phys.\ {\bf 8} (2001), suppl., 23--27.

\bibitem{besasz07}
R.\ Beals, D.\ H.\ Sattinger and J.\ Szmigielski, {\em The string density problem and the Camassa--Holm equation}, Phil.\ Trans.\ Roy.\ Soc.\ A \ {\bf 365} (2007), no.~2, 2299--2312.

\bibitem{bft}
J.\ Behrndt, F.\ Philipp and C.\ Trunk, {\em Bounds on the non-real spectrum of differential operators with indefinite weights},
Math.\ Ann.\ {\bf 357} (2013), 185--213.

\bibitem{bel87}
M.\ I.\ Belishev, {\em Inverse spectral indefinite problem for the equation $y''+\lambda p(x) y=0$ on an interval}, Funct.\ Anal.\ Appl.\ {\bf 21} (1987), no.~2, 146--148.

\bibitem{be89}
C.\ Bennewitz, {\em Spectral asymptotics for Sturm--Liouville equations}, Proc.\ London Math.\ Soc.\ (3) {\bf 59} (1989), no.~2, 294--338.

\bibitem{bebrwe09}
C.\ Bennewitz, B.\ M.\ Brown and R.\ Weikard, {\em Inverse spectral and scattering theory for the half-line left-definite Sturm--Liouville problem}, SIAM J.\ Math.\ Anal.\ {\bf 40} (2008/09), no.~5, 2105--2131.

\bibitem{bo07}
V.\ I.\ Bogachev, {\em Measure Theory}, Vol.\ I, II, Springer-Verlag, Berlin, 2007.

\bibitem{dB60}
L.\ de Branges, {\em Some Hilbert spaces of entire functions}, Trans.\ Amer.\ Math.\ Soc.\ {\bf 96} (1960), 259--295.

\bibitem{dB61a}
L.\ de Branges, {\em Some Hilbert spaces of entire functions II}, Trans.\ Amer.\ Math.\ Soc.\ {\bf 99} (1961), 118--152.

\bibitem{dB61b}
L.\ de Branges, {\em Some Hilbert spaces of entire functions III}, Trans.\ Amer.\ Math.\ Soc.\ {\bf 100} (1961), 73--115.

\bibitem{dB62}
L.\ de Branges, {\em Some Hilbert spaces of entire functions IV}, Trans.\ Amer.\ Math.\ Soc.\ {\bf 105} (1962), 43--83.

\bibitem{dB68}
L.\ de Branges, {\em Hilbert Spaces of Entire Functions}, Prentice-Hall, Inc., Englewood Cliffs, N.J.\ 1968.

\bibitem{caho93}
R.\ Camassa and D.\ Holm, {\em An integrable shallow water equation with peaked solitons}, Phys.\ Rev.\ Lett.\ {\bf 71} (1993), no.~11, 1661--1664.

\bibitem{clz06}
M.\ Chen, S.-Q.\ Liu and Y.\ Zhang, {\em A two-component generalization of the Camassa--Holm equation and its solutions}, Lett.\ Math.\ Phys.\ {\bf 75} (2006), no.~1, 1--15.

\bibitem{coes98}
A.\ Constantin and J.\ Escher, {\em Global existence and blow-up for a shallow water equation}, Ann.\ Scuola Norm.\ Sup.\ Pisa Cl.\ Sci.\ (4) {\bf 26} (1998), no.~2, 303--328.

\bibitem{comc99}
A.\ Constantin and H.\ P.\ McKean, {\em A shallow water equation on the circle}, Comm.\ Pure Appl.\ Math.\ {\bf 52} (1999), no.~8, 949--982.

\bibitem{dymc76}
H.\ Dym and H.\ P.\ McKean, {\em Gaussian Processes, Function Theory and The Inverse Spectral Problem}, Probability and Mathematical Statistics {\bf 31}, Academic Press, New York-London, 1976.

\bibitem{LeftDefiniteSL}
J.\ Eckhardt, {\em Direct and inverse spectral theory of singular left-definite Sturm--Liouville operators}, J.\ Differential Equations {\bf 253} (2012), no.~2, 604--634.

\bibitem{ConservMP}
J.\ Eckhardt and A.\ Kostenko, {\em An isospectral problem for global conservative multi-peakon solutions of the Camassa--Holm equation}, Comm.\ Math.\ Phys.\ {\bf 329} (2014), no.~3, 893--918.

\bibitem{CHPencil}
J.\ Eckhardt and A.\ Kostenko, {\em Quadratic operator pencils associated with the conservative Camassa--Holm flow}, Bull.\ Soc.\ Math.\ France (to appear).

\bibitem{AsymCS}
J.\ Eckhardt, A.\ Kostenko and G.\ Teschl, {\em Spectral asymptotics for canonical systems}, J.\ Reine Angew.\ Math.\ (to appear), \doi{10.1515/crelle-2015-0034}. 

\bibitem{IsospecCH}
J.\ Eckhardt and G.\ Teschl, {\em On the isospectral problem of the dispersionless Camassa--Holm equation}, Adv.\ Math.\ {\bf 235} (2013), 469--495.

\bibitem{MeasureSL}
J.\ Eckhardt and G.\ Teschl, {\em Sturm--Liouville operators with measure-valued coefficients}, J.\ Anal.\ Math.\ {\bf 120} (2013), no.~1, 151--224.

\bibitem{fe54}
W.\ Feller, {\em Generalized second order differential operators and their lateral conditions}, Illinois J.\ Math.\ {\bf 1} (1957), 459--504.
 
 \bibitem{fl96}
A.\ Fleige, {\em Spectral theory of indefinite Krein--Feller differential operators}, Mathematical Research, 98, Akademie Verlag, Berlin, 1996.
 
 \bibitem{fl14}
 A.\ Fleige, {\em The critical point infinity associated with indefinite Sturm--Liouville problems}, in {\em Operator Theory}, 395--429, Springer, Basel, 2015, \doi{10.1007/978-3-0348-0667-1\_44}. 
 
\bibitem{gewe14}
F.\ Gesztesy and R.\ Weikard, {\em Some remarks on the spectral problem underlying the Camassa--Holm hierarchy}, in {\em Operator theory in harmonic and non-commutative analysis}, 137--188, Oper.\ Theory Adv.\ Appl., 240, Birkh\"auser/Springer, Cham, 2014.

\bibitem{gk}
I.\ C.\ Gohberg and M.\ G.\ Krein, {\em Theory and Applications of Volterra Operators in Hilbert Space}, Transl.\ Math.\ Monographs {\bf 24}, Amer.\ Math.\ Soc., 1970.

\bibitem{grhora12}
K.\ Grunert, H.\ Holden and X.\ Raynaud, {\em Global solutions for the two-component Camassa--Holm system}, Comm.\ Partial Differential Equations {\bf 37} (2012), no.~12, 2245--2271.

\bibitem{hest65}
E.\ Hewitt and K.\ Stromberg, {\em Real and Abstract Analysis}, Springer, New York, 1965.

\bibitem{hoiv12}
D.\ D.\ Holm and R.\ Ivanov, {\em Two-component CH system: inverse scattering, peakons and geometry}, Inverse Problems {\bf 27} (2011), no.~4, 045013.

\bibitem{hrpr12}
R.\ Hryniv and N.\ Pronska, {\em Inverse spectral problems for energy-dependent Sturm--Liouville equations},  Inverse Problems {\bf 28} (2012), no.~8, 085008.

\bibitem{husa91}
J.\ K.\ Hunter and R.\ Saxton,  {\em Dynamics of director fields}, SIAM J.\ Appl.\ Math.\ {\bf 51} (1991), no.~6, 1498--1521.

\bibitem{jaje72}
M.\ Jaulent and C.\ Jean, {\em The inverse problem for the one-dimensional Schr\"odinger equation with an energy-dependent potential},  Ann.\ Inst.\ H.\ Poincar\'e Sect.\ A  {\bf 25} (1976), 105--118.

\bibitem{kakr74a}
I.\ S.\ Kac and M.\ G.\ Krein, {\em R--functions --- analytic functions mapping the upper half-plane into itself}, Amer.\ Math.\ Soc.\ Transl.\ Ser.\ 2, {\bf 103} (1974), 1--18.

\bibitem{kakr74}
I.\ S.\ Kac and M.\ G.\ Krein, {\em On the spectral functions of the string}, Amer.\ Math.\ Soc.\ Transl.\ Ser.\ 2 {\bf 103} (1974), 19--102.

\bibitem{kawiwo07}
M.\ Kaltenb\"{a}ck, H.\ Winkler and H.\ Woracek, {\em Strings, dual strings, and related canonical systems}, Math.\ Nachr.\ {\bf 280} (2007), no.~13-14, 1518--1536.

\bibitem{ka75}
Y.\ Kasahara, {\em Spectral theory of generalized second order differential operators and its applications to Markov processes}, Japan J.\ Math.\ {\bf 1} (1975), no.~1, 67--84. 

\bibitem{ka12}
Y.\ Kasahara, {\em Spectral function of Krein's and Kotani's string in the class $\Gamma$}, Proc.\ Japan Acad.\ Ser.\ A Math.\ Sci.\ {\bf 88} (2012), no.~10, 173--177. 
 
\bibitem{ka83}
I.\ S.\ Kats, {\em Linear relations generated by canonical differential equations}, Funct.\ Anal.\ Appl.\ {\bf 17} (1983), no.~4, 315--317.

\bibitem{ka94}
I.\ S.\ Kats, {\em The spectral theory of a string}, Ukrainian Math.\ J.\ {\bf 46} (1994), no.~3, 159--182.

\bibitem{kau75}
D.\ J.\ Kaup, {\em A higher-order water-wave equation and the method for solving it}, Progr.\ Theoret.\ Phys.\ {\bf 54} (1975), no.~2, 396--408.

\bibitem{kau06}
D.\ J.\ Kaup, {\em Evolution of the scattering coefficients of the
Camassa--Holm equation, for general initial data}, Stud.\ Appl.\ Math.\ {\bf 117} (2006), no.~2, 149--164.

\bibitem{ko13}
 A.\ Kostenko, {\em The similarity problem for indefinite Sturm--Liouville operators and the HELP inequality}, Adv.\ Math.\ {\bf 246} (2013), 368--413.

\bibitem{ko07}
S.\ Kotani, {\em Krein's strings with singular left boundary}, Rep.\ Math.\ Phys.\ {\bf 59} (2007), 305--316.

\bibitem{kowa82}
S.\ Kotani and S.\ Watanabe, {\em Kre\u{\i}n's spectral theory of strings and generalized diffusion processes}, in Functional analysis in Markov processes (Katata/Kyoto, 1981), pp.~235--259, Lecture Notes in Math.\ {\bf 923}, Springer, Berlin-New York, 1982.

\bibitem{kr52} 
M.\ G.\ Kre\u{\i}n, {\em On a generalization of investigation of Stieltjes}, Dokl.\ Akad.\ Nauk SSSR {\bf 87} (1952), no.~6, 881--884. (in Russian)  

\bibitem{krla79} 
M.\ G.\ Kre\u{\i}n and H.\ Langer, {\em On some extension problems which are closely connected with the theory of Hermitian operators in a space $\Pi_\kappa$. III. Indefinite analogues of the Hamburger and Stieltjes moment problems. Part I.} Beitr\"age Anal.\ No.\ 14 (1979), 25--40; {\em Part II}, Beitr\"age Anal.\ No.\ 15 (1980), 27--45.

\bibitem{kr75}
M.\ Kruskal, {\em Nonlinear wave equations}, Lect.\ Notes in Physics {\bf 38} (1975), 310--354.

\bibitem{la76}
H.\ Langer, {\em Spektralfunktionen einer Klasse von Differentialoperatoren zweiter Ordnung mit nichtlinearem Eigenwertparameter}, Ann.\ Acad.\ Sci.\ Fenn.\ Ser.\ A I Math.\ {\bf 2} (1976), 269--301. 

\bibitem{lawi98}
H.\ Langer and H.\ Winkler, {\em Direct and inverse spectral problems for generalized strings}, Integral Equations Operator Theory {\bf 30} (1998), no.~4, 409--431.

\bibitem{le05}
J.\ Lenells, {\em Conservation laws of the Camassa--Holm equation}, J.\ Phys.\ A {\bf 38} (2005), no.~4, 869--880.

\bibitem{lema}
M.\ Lesch and M.\ M.\ Malamud, {\em On the deficiency indices and self-adjointness of symmetric Hamiltonian systems}, J.\ Differential Equations {\bf 189} (2003), no.~2, 556--615.

\bibitem{lilo01}
E.\ H.\ Lieb and M.\ Loss, {\em Analysis}, Second edition, Graduate Studies in Mathematics {\bf 14}, Amer.\ Math.\ Soc., Providence, RI, 2001.

\bibitem{mc03}
H.\ P.\ McKean, {\em Fredholm determinants and the Camassa--Holm hierarchy}, Comm.\ Pure Appl.\ Math.\ {\bf 56} (2003), no.~5, 638--680. 

\bibitem{mc04}
H.\ P.\ McKean, {\em Breakdown of the Camassa--Holm equation}, Comm.\ Pure Appl.\ Math.\ {\bf 57} (2004), no.~3, 416--418. 

\bibitem{ro14}
R.\ Romanov, {\em Canonical systems and de Branges spaces}, \arxiv{1408.6022}, 74 pp.

\bibitem{roro94}
M.\ Rosenblum and J.\ Rovnyak, {\em Topics in Hardy Classes and Univalent Functions}, Birkh\"{a}user Verlag, Basel, 1994.

\bibitem{sasz96}
D.\ H.\ Sattinger and J.\ Szmigielski, {\em  A Riemann--Hilbert problem for an energy dependent Schr\"odinger operator}, Inverse Problems {\bf 12} (1996), no.~6, 1003--1025.

\bibitem{ss03}
A.\ M.\ Savchuk and A.\ A.\ Shkalikov, {\em Sturm--Liouville operators with distribution potentials}, Trans.\ Moscow Math.\ Soc.\ {\bf 2003}, 143--190.

\bibitem{sc93}
J.\ L.\ Schiff, {\em Normal families}, Universitext, Springer-Verlag, New York, 1993.

\bibitem{wa06}
E.\ Wahl\'{e}n, {\em The interaction of peakons and antipeakons}, Dyn.\ Contin.\ Discrete Impuls.\ Syst.\ Ser.\ A Math.\ Anal.\ {\bf 13} (2006), no.~3-4, 465--472.

\bibitem{wi95}
H.\ Winkler, {\em The inverse spectral problem for canonical systems}, Integral Equations Operator Theory {\bf 22} (1995), no.~3, 360--374. 

\bibitem{wi14}
H.\ Winkler, {\em Two-dimensional Hamiltonian systems}, in {\em Operator Theory}, 525--547, Springer, Basel, 2015, \doi{10.1007/978-3-0348-0667-1\_11}.

\bibitem{ze05}
A.\ Zettl, {\em Sturm--Liouville Theory}, Math.\ Surv.\ and Mon.\ {\bf 121}, Amer.\ Math.\ Soc., Providence, RI, 2005.

\end{thebibliography}
\end{document}